\newtheorem{theorem}{Theorem}[section]
\newtheorem{lemma}[theorem]{Lemma}
\newtheorem{question}[theorem]{Question}
\newtheorem{observation}[theorem]{Observation}
\DeclareMathOperator{\tw}{tw}
\DeclareMathOperator{\pw}{pw}
\def\dd{\hbox{-}}   
\newcommand{\mf}{\mathfrak}
\newcommand{\mca}{\mathcal}
\newcommand{\poi}{\mathbb{N}} 
\newcommand{\pre}{\preccurlyeq}
\newcounter{tbox}
\newcommand{\sta}[1]{\medskip\medskip\refstepcounter{tbox}\noindent{\parbox{\textwidth}{(\thetbox) \emph{#1}}}\vspace*{0.3cm}}
\newcommand{\mylongtitle}[1]{%
  \ifodd\value{page}%
    \protect\parbox{0.97\linewidth}{#1}\hfill%
  \else%
    \hfill\protect\parbox{0.97\linewidth}{#1}%
  \fi%
}
\title[Induced subgraphs and tree decompositions XVI.]{Induced subgraphs and tree decompositions\\
XVI. Complete bipartite induced minors}
\author{Maria Chudnovsky$^{\dagger \ast}$}
\author{Sepehr Hajebi$^{\mathsection}$}
\author{Sophie Spirkl$^{\mathsection \parallel}$}
\thanks{$^{\dagger}$ Princeton University, Princeton, NJ, USA}
\thanks{$^{\mathsection}$ Department of Combinatorics and Optimization, University of Waterloo, Waterloo, Ontario, Canada}
\thanks{$^{\ast}$ Supported by  NSF-EPSRC Grant DMS-2120644, AFOSR grant FA9550-22-1-0083 and NSF Grant DMS-2348219.} 
\thanks{$^{\parallel}$ We acknowledge the support of the Natural Sciences and Engineering Research Council of Canada (NSERC), [funding reference number RGPIN-2020-03912].
Cette recherche a \'et\'e financ\'ee par le Conseil de recherches en sciences naturelles et en g\'enie du Canada (CRSNG), [num\'ero de r\'ef\'erence RGPIN-2020-03912]. This project was funded in part by the Government of Ontario. This research was conducted while Spirkl was an Alfred P. Sloan Fellow.}
\date {\today}
\begin{document}
\maketitle
\sloppy

\begin{abstract}
    We prove that for every graph $G$ with a sufficiently large complete bipartite induced minor, either $G$ has an induced minor isomorphic to a large wall, or $G$ contains a large \textit{constellation}; that is, a complete bipartite induced minor model such that on one side of the bipartition, each branch set is a singleton, and on the other side, each branch set induces a path.
    
    We further refine this theorem by characterizing the unavoidable induced subgraphs of large constellations as two types of highly structured constellations. These results will be key ingredients in several forthcoming papers of this series.

\end{abstract}

\section{Introduction}
The set of all positive integers is denoted by $\poi$. For an integer $k$, we write $\poi_k$ for the set of all positive integers not greater than $k$ (so $\poi_k=\varnothing$ if and only if $k\leq 0$). Graphs in this paper have finite vertex sets, no loops and no parallel edges. Given a graph $G=(V(G),E(G))$ and $X\subseteq V(G)$, we use $X$ to denote both the set $X$ and the subgraph $G[X]$ of $G$ induced by $X$. For a set $\mca{X}$ of subsets of $V(G)$, we write $V(\mca{X})=\bigcup_{X\in \mca{X}}X$. We say that $G$ is \textit{$H$-free}, for another graph $H$, if $G$ has no induced subgraph isomorphic to $H$.
\medskip

This series of papers studies the interplay between induced subgraphs and \textit{treewidth} (where the treewidth of a graph $G$ is denoted by $\tw(G)$; see \cite{diestel} for a definition).

Specifically,  one of the main goals is to answer the following question:

\begin{question}\label{q:GT}
    What are the unavoidable induced subgraphs of graphs with large treewidth?
\end{question}

The answer is known if we replace ``induced subgraphs'' by ``subgraphs'' or ``minors'' as shown by Robertson and Seymour \cite{GMV} (where $W_{r \times r}$ denotes the $r$-by-$r$ hexagonal grid, also known as the $r$-by-$r$ \textit{wall};
see Figure~\ref{fig:basic}):

\begin{theorem}[Robertson and Seymour \cite{GMV}]\label{thm:wallminor}
For every integer $r\in \poi$,
there is a constant $f_{\ref{thm:wallminor}}=f_{\ref{thm:wallminor}}(r)\in \poi$ such that every graph $G$ with $\tw(G) \geq f_{\ref{thm:wallminor}}$ has a subgraph isomorphic to a subdivision of $W_{r\times r}$.
\end{theorem}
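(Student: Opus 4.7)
The plan is to use the classical two-step approach of Robertson and Seymour. Step one (the main obstacle): show that sufficiently large treewidth forces a large complete grid minor. Step two (routine): convert the grid minor into a wall subdivision.

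For step one, my approach would be to work with tangles, the standard dual objects to tree decompositions; recall that treewidth at least $k$ is essentially equivalent to the existence of a tangle of order about $k$. The goal is then to show that a tangle of sufficiently large order produces a grid minor of any prescribed size. One does this by iteratively extracting highly linked sets using the tangle, invoking Menger's theorem to route many pairwise disjoint paths, and then applying Ramsey-type pigeonhole arguments to nested families of such paths to align two orthogonal linked path systems that jointly form a grid minor model. The bound on $f_{\ref{thm:wallminor}}(r)$ coming out of this approach is enormous (polynomial bounds were obtained much later by Chekuri and Chuzhoy), but any finite bound suffices for the statement.

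For step two, I would use the following well-known fact: if $H$ has maximum degree at most $3$ and $G$ contains $H$ as a minor, then $G$ contains a subdivision of $H$. Indeed, given a minor model $(B_v)_{v\in V(H)}$, for each branch set $B_v$ one selects a minimal connected subgraph that still attaches to all neighbouring branch sets; since $v$ has at most $3$ neighbours in $H$, this minimal subgraph is a subdivided star $K_{1,\deg(v)}$, and concatenating these subdivided stars along the edges of the minor model yields a subdivision of $H$ in $G$. Since $W_{r\times r}$ is a subgraph of a sufficiently large grid and has maximum degree $3$, applying step one with a large enough grid size and then step two produces the desired wall subdivision.

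The main obstacle is step one: the move from a purely combinatorial obstruction (tangles or brambles) to a concrete geometric structure (a grid minor) is the substantive content of the grid minor theorem, and every technical difficulty lies in the routing arguments. Step two is essentially a one-paragraph observation and contributes nothing to the asymptotics of $f_{\ref{thm:wallminor}}$.
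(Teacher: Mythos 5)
This statement is quoted in the paper as Robertson and Seymour's theorem from \emph{Graph Minors V} and is not proved there, so there is no in-paper argument to compare against; your outline (force a large grid minor from large treewidth via tangles, Menger-type linkage and routing arguments, then convert the maximum-degree-$3$ wall minor into a subdivision by the standard minimal-branch-set argument) is exactly the classical route taken in the cited source, and your step two is complete and correct. The only caveat is that your step one is a roadmap rather than a proof---the routing argument \emph{is} the grid minor theorem---but this is consistent with how the paper itself uses the result, namely as a black box.
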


  \begin{figure}[t!]
        \centering
\includegraphics[scale=0.6]{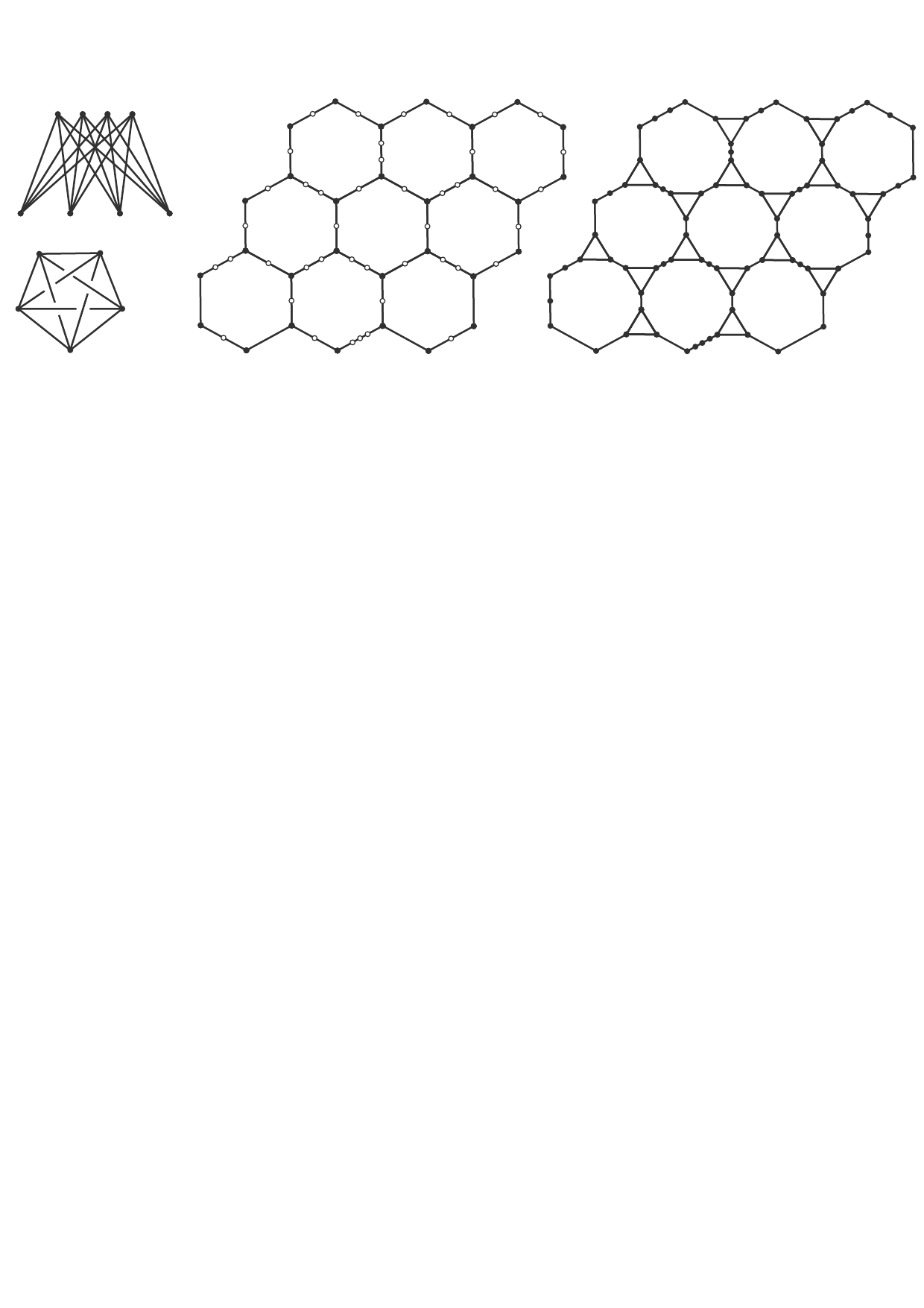}
        \caption{The basic obstructions of treewidth 4: $K_{4,4}$ (top left), $K_5$ (bottom left), a subdivision of the $4$-by-$4$ wall (middle) and the line graph of a subdivision of the $4$-by-$4$ wall (right).}
        \label{fig:basic}
    \end{figure}   

The answer to Question \ref{q:GT} is necessarily more complicated, as it needs to include all \emph{basic obstructions}: complete graphs, complete bipartite graphs, subdivided walls, and the line graphs of subdivided walls (see Figure~\ref{fig:basic}). Moreover, basic obstructions do not form a complete answer to Question \ref{q:GT}, as shown by several examples of ``non-basic'' obstructions such as the Pohoata-Davies graphs \cite{davies, pohoata2014unavoidable}, ``occultations'' \cite{tw9,deathstar}, and ``layered wheels'' \cite{sintiari2021theta}.

It appears that the key to answering Question~\ref{q:GT} is to understand the unavoidable induced subgraphs of graphs with large dense minors. In all non-basic obstructions, large treewidth is caused by the presence of a large complete minor (and that is for a good reason: it has been shown several times \cite{aboulker, seymourtalk, wollananswers} that every graph with large enough treewidth and no induced subgraph isomorphic to a basic obstruction of large treewidth has a large complete minor). In fact, the first two non-basic obstructions mentioned above, the Pohoata-Davies graphs and the occultations (see Figure~\ref{fig:pdo}), have large complete bipartite induced minors (while the third, layered wheels, do not \cite{nicolassuggests}. This also follows directly from our main result, Theorem~\ref{thm:motherKtt}.)

\begin{figure}[t!]
    \centering
    \includegraphics[scale=0.7]{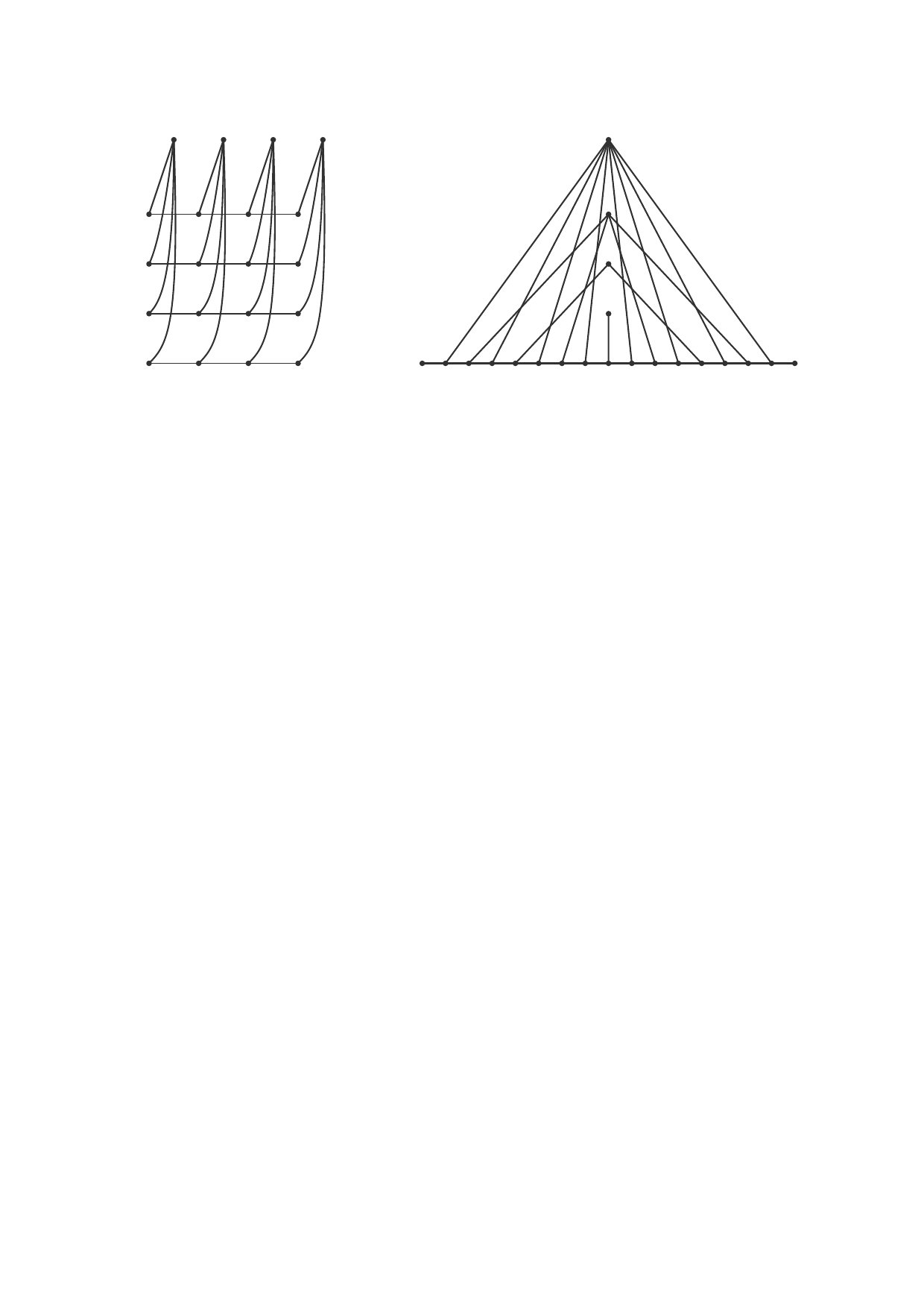}
    \caption{A Pohoata-Davies graph (left) and an occultation (right), both of treewidth $4$.}
    \label{fig:pdo}
\end{figure}

It is therefore natural to ask for a characterization of the unavoidable induced subgraphs of graphs with large complete bipartite induced minors (at least in the absence of basic obstructions of large treewidth). Our main result is exactly this characterization. As it turns out, the Pohoata-Davies graphs and the occultations are essentially the only (non-basic) outcomes.

Let us make all this precise, beginning with a few definitions. Let $G$ be a graph. We say that $X, Y\subseteq V(G)$ are \textit{anticomplete in $G$} if $X\cap Y=\varnothing$ and there is no edge in $G$ with an end in $X$ and an end in $Y$ (if $X=\{x\}$ is a singleton, then we also say \textit{$x$ is anticomplete to $Y$ in $G$}). Given a graph $H$, we say \textit{$G$ has an induced minor isomorphic to $H$} if there are pairwise disjoint connected induced subgraphs $(G_v:v\in V(H))$ of $G$, which we call the \textit{branch sets}, such that for all distinct $u,v\in V(G)$, we have $uv\in E(H)$ if and only if $G_u, G_v\subseteq V(G)$ are not anticomplete in $G$. Both the Pohoata-Davies graphs and the occultations contain complete bipartite induced minors of a special type: On one side of the bipartition, each branch set is a singleton, and on the other side, each branch set induces a path. We call such graphs \emph{constellations}, and analyzing the structure of constellations was one of the main tools in many of our earlier papers \cite{tw9, tw11, tw12, tw13, chordalehf}. Formally, a \textit{constellation} is a graph $\mf{c}$ in which there is a stable set $S_{\mf{c}}$ such that every component of $\mf{c}\setminus S_{\mf{c}}$ is a path, and each vertex $x\in S_{\mf{c}}$ has at least one neighbor in each component of $\mf{c}\setminus S_{\mf{c}}$. We denote by $\mca{L}_{\mf{c}}$ the set of all components $\mf{c}\setminus S_{\mf{c}}$ (each of which is a path), and denote the constellation $\mf{c}$ by the pair $(S_{\mf{c}},\mca{L}_{\mf{c}})$. For $l,s\in \poi$, by an \textit{$(s,l)$-constellation} we mean a constellation $\mf{c}$ with $|S_{\mf{c}}|=s$ and $|\mca{L}_{\mf{c}}|=l$ (see Figure~\ref{fig:constellation}). Given a graph $G$, by an \textit{$(s,l)$-constellation in $G$} we mean an induced subgraph of $G$ which is an $(s,l)$-constellation.

\begin{figure}[t!]
    \centering
    \includegraphics[scale=0.6]{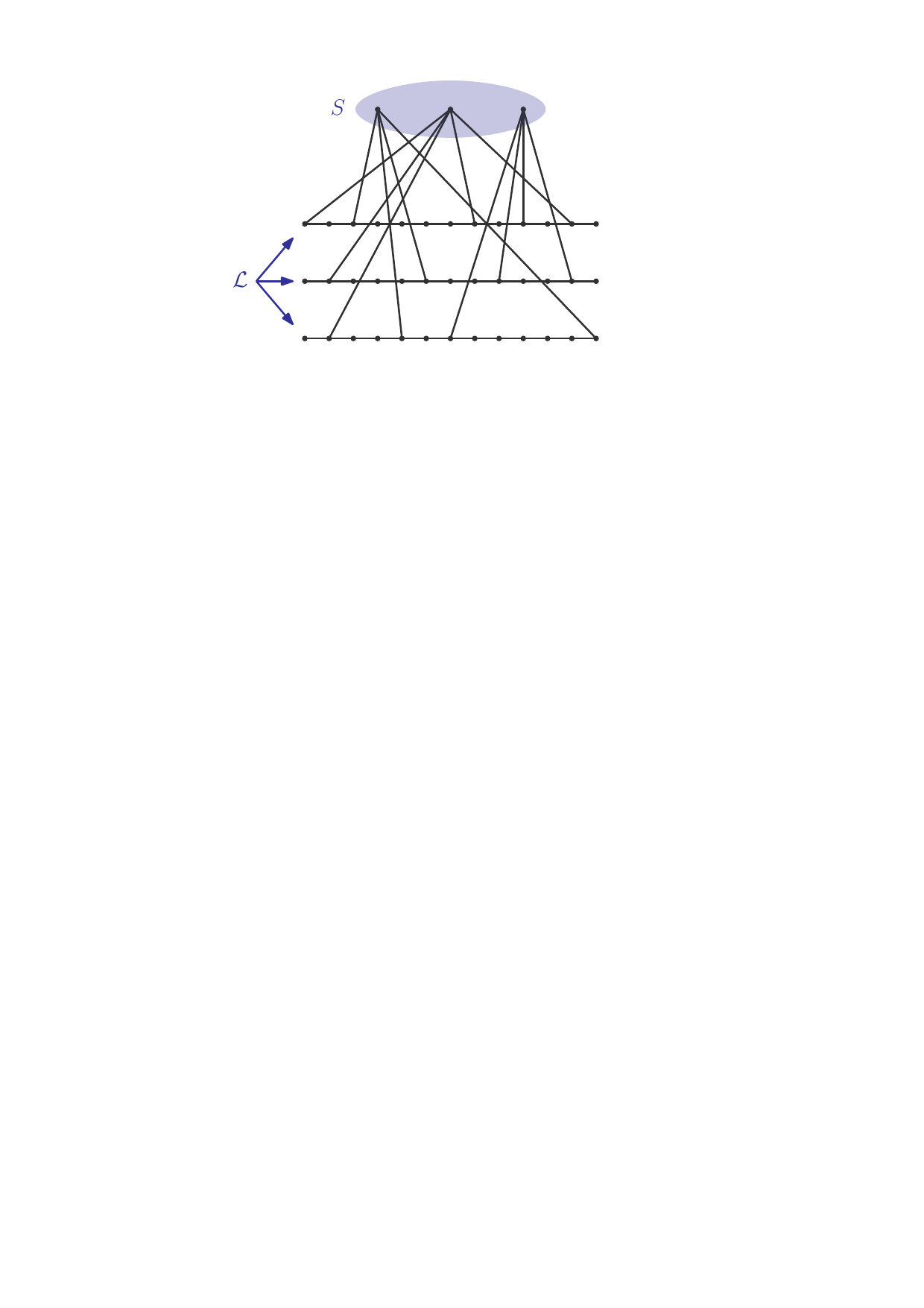}
    \caption{A $(3,3)$-constellation.}
    \label{fig:constellation}
\end{figure}
\medskip

Note that an $(s, l)$-constellation has treewidth at least $\min\{s, l\}$, because it has an induced minor isomorphic to the complete bipartite graph $K_{s,l}$ (obtained by contracting all edges of the paths in $\mathcal{L}_{\mf{c}}$). Our first result shows that, in the absence of basic obstructions, the converse is also true: 

\begin{theorem}\label{thm:main_ind_minor}
    For all $l,r,s\in \poi$, there is a constant $f_{\ref{thm:main_ind_minor}}=f_{\ref{thm:main_ind_minor}}(l,r,s)\in \poi$ such that if $G$ is a graph with an induced minor isomorphic to $K_{f_{\ref{thm:main_ind_minor}},f_{\ref{thm:main_ind_minor}}}$, then one of the following holds.
    \begin{enumerate}[{\rm (a)}]
        \item\label{thm:main_ind_minor_a} There is an induced minor of $G$ isomorphic to $W_{r\times r}$.
        \item\label{thm:main_ind_minor_a} There is an $(s,l)$-constellation in $G$.
    \end{enumerate}
\end{theorem}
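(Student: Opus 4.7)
The plan is to prove Theorem~\ref{thm:main_ind_minor} by structural refinement of the $K_{f,f}$ induced minor. Let $(A_i)_{i\in \poi_f}$ and $(B_j)_{j\in \poi_f}$ be its branch sets, with $f$ chosen sufficiently large in terms of $l, r, s$. As a first step, for each $j$ and each $i$ I pick a vertex $b_{j,i} \in B_j$ adjacent to some vertex of $A_i$, and let $T_j \subseteq B_j$ be a minimal subtree containing $\{b_{j,i}: i \in \poi_f\}$; by minimality, every leaf of $T_j$ is some $b_{j,i}$, so $T_j$ encodes the full attachment pattern from $B_j$ to the $A_i$'s. Symmetrically, define $T^A_i \subseteq A_i$ using attachments from $A_i$ to the $B_j$'s. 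These two families of Steiner skeletons will serve as the combinatorial handle on the original induced minor.

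Next, I would apply a dichotomy on the number of leaves of these skeletons. By a double pigeonhole on $i$ and $j$, we may assume that the $T_j$'s are uniformly either \emph{branchy} (at least $L$ leaves, for a threshold $L = L(r)$ to be chosen) or \emph{thin} (fewer than $L$ leaves), and similarly for the $T^A_i$'s. In the doubly-thin case, a further pigeonhole on the rooted isomorphism type of each skeleton (viewed as a tree with marked attachments) lets us assume all $T_j$ have a common bounded-complexity shape, and likewise for the $T^A_i$. If this common shape on the $A$-side collapses to a single vertex, then iterated Ramsey arguments on the attachment endpoints produce singletons $a_i \in A_i$ attached to paths $P_j \subseteq T_j \subseteq B_j$, directly yielding an $(s, l)$-constellation. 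If instead both sides are path-shaped, then the consistent attachment orderings along the $T_j$ and $T^A_i$ give a grid-like configuration from which a $W_{r\times r}$ induced minor is straightforward to read off.

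The main obstacle is the branchy case, where some skeletons are genuinely tree-like rather than path-like. Here my plan is to combine several branchy $B_j$'s together with suitably chosen $A_i$'s acting as vertical connectors, threading the ``spider legs'' of the $T_j$'s through the $A_i$'s to realize the hexagonal topology of $W_{r\times r}$ as an induced minor. The difficulty is simultaneously controlling the wall's combinatorial topology and the \emph{induced} structure, so that no spurious edges arise between the chosen parts of distinct branch sets. I expect this step to require a careful subdivision-based construction --- possibly invoking an induced-minor analog of Theorem~\ref{thm:wallminor} together with iterated Ramsey-type cleanups --- and to be where the bulk of the technical difficulty of the theorem is concentrated; the thin and singleton cases, by contrast, should follow from comparatively routine pigeonhole arguments once the skeleton setup is in place.
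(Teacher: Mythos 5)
Your proposal identifies the right starting point (extracting combinatorial skeletons from the branch sets), but it diverges from a workable proof at several places, and the places where you wave your hands are exactly where the paper's argument does real work.

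\textbf{The Steiner-tree dichotomy is a red herring.} The paper never considers a tree in $B_j$ hitting all $A_i$'s; instead, for each triple $(i,j,k)$ it chooses a \emph{minimal path} $P_{i,j,k}\subseteq A_k$ attached to just $B_i$ and $B_j$ (Theorem~\ref{thm:linear}). The crucial quantity is then, for this minimal path, how many \emph{other} $B_l$'s it happens to hit. If few, a Ramsey-type cleanup extracts an $A$-linear model. If many, you do \emph{not} find branchiness; you find a 1-subdivision of $K_r$ as an induced minor, via the Ding--Seymour--Winkler $\tau$--$\nu$--$\lambda$ bound (Theorem~\ref{thm:hglemma}) applied to the hypergraph whose vertices are the pairs $\{i,j\}$ and whose hyperedges record which $B_l$'s a path $P_{i,j}$ meets. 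This hypergraph lemma is the engine that makes the whole reduction go, and your proposal has no substitute for it. By contrast, a pigeonhole on "rooted isomorphism type with marked attachments" of a skeleton hitting all $f$ branch sets on the other side is on an unbounded type space, so the uniformization you ask for is not available without a much more delicate argument.

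\textbf{You are missing the amplification step entirely.} Even once both sides are path-shaped (a linear model), you cannot read off a wall: two apices can have a common neighbour on a path, or a very short route between them, and this ruins any attempt to realize wall branch sets that are pairwise anticomplete. The paper inserts Lemma~\ref{lem:getample2}, which uses Lemma~\ref{lem:getample} on both contractions $\mathfrak{c}(M)$ and $\mathfrak{c}(M^T)$ to pass to a $2$-ample linear model, and that ampleness is used at a specific point in the wall construction (the claim $k\geq 5$ in \eqref{st:anti}). Without some version of this step, the "grid-like configuration" you describe need not produce an \emph{induced} minor.

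\textbf{The wall construction in the doubly-thin case is not "straightforward to read off."} The paper's Theorem~\ref{thm:mainktt} is a nontrivial argument: it keeps only the cross-edges of the model, shows (via ampleness) that the components of the resulting bipartite subgraph use at most one $A_i$ and one $B_j$ each, contracts these components, proves the contracted graph has large treewidth by exhibiting a $K_{f,f}$ minor in a "doubled" version, applies the Grid Minor Theorem (Theorem~\ref{thm:wallminor}) to get a wall \emph{subgraph}, and then argues this wall is in fact \emph{induced} by observing that no two vertices of degree at least 3 are adjacent in the contracted graph. Every one of those five steps is necessary, and none of them appears in your sketch. "Consistent attachment orderings give a grid" is precisely the sort of claim that fails because the ordered attachments can still overlap in ways that create chords.

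In short, your plan concentrates the expected difficulty on a "branchy" case that the actual proof never confronts, while treating as routine the ampleness and induced-wall-extraction steps where the real work lies.
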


It remains to describe the unavoidable induced subgraphs of large constellations, which we will do in our second result. The exact statement will be given as Theorem~\ref{thm:mainconstellation}, which roughly says the following: for all $l,r,s\in \poi$, given a sufficiently large constellation $\mf{c}$, either a basic obstruction of treewidth $r$ is present in $\mf{c}$ as an induced subgraph, or there is an $(s,l)$-constellation $\mf{b}$ in $\mf{c}$ along with a linear order $x_1, \dots, x_s$ of the vertices in $S_{\mf{b}}$ such that one of the following holds: 

\begin{itemize}
    \item For all $1\leq i < j\leq s$ and every path $P$ from $x_i$ to $x_j$ with interior in $V(\mathcal{L})$, every vertex $x_k$ with $j<k\leq s$ has a neighbor in $P$; or
    \item There is a constant $c = c(r)$ (actually, $c(r)=2r^2$ works) such that for all $1\leq i < k\leq s$ and every path $P$ from $x_i$ to $x_k$ with interior in $V(\mathcal{L})$, we have
    $$|\{j \in \{i+1, \dots, k-1\} : N(x_j) \cap V(P) = \varnothing\}|<c.$$
\end{itemize}
In particular, the first of these is a slight generalization of occultations. The second is a relatively substantial generalization of the Pohoata-Davies graphs, but we will show in Section~\ref{sec:zig} that this outcome cannot be replaced with (straightforward variants of) the Pohoata-Davies graphs. We will also prove in Section~\ref{sec:necessary} that both outcomes are necessary.

\section{Unavoidable constellations and the main result}

Here we give the exact statement of our second result, Theorem~\ref{thm:mainconstellation}. Then we state our main result, Theorem~\ref{thm:motherKtt}, as a direct combination of Theorems~\ref{thm:main_ind_minor} and \ref{thm:mainconstellation}.
\medskip

We need several definitions. Let $P$ be a graph which is a path. Then we write $P=p_1\dd \cdots\dd p_k$ to mean $V(P)=\{p_1,\ldots,p_k\}$ for $k\in \poi$, and $E(P)=\{p_ip_{i+1}:i\in \poi_{k-1}\}$. We call $p_1,p_k$ the \textit{ends} of $P$, and we call $P\setminus \{p_1,p_k\}$ the interior of $P$, denoted $P^*$. Given a graph $G$, by a \textit{path in $G$} we mean an induced subgraph of $G$ that is a path.
 
Given a graph $G$, a \emph{subdivision of $G$} is a graph obtained from $G$ by replacing each edge $e = uv$ of $G$ by a path $P_e$ of length at least 1 from $u$ to $v$ such that the interiors of the paths are pairwise disjoint and anticomplete. For $d \in \mathbb{N}$, this is a \emph{$d$-subdivision} (\emph{$(\leq d)$-subdivision, $(\geq d)$-subdivision}) if each path $P_e$ has length exactly $d+1$ (at most $d+1$, at least $d+1$) for all $e \in E(G)$. A subdivision is \emph{proper} if it is a $(\geq 1)$-subdivision.
For a set $X$, a linear order $\pre$ on $X$, and $x,y\in X$, we write $x\prec y$ to mean $x\pre y$ and $x$ and $y$ are distinct. For an element $x\in X$ and a subset $Y\subseteq X$, we write $x\prec Y$ to mean $x\prec y$ for every $y\in Y$. Similarly, we write $Y\prec x$ to mean $y\prec x$ for every $y\in Y$.
\begin{figure}[t!]
    \centering
    \includegraphics[scale=0.6]{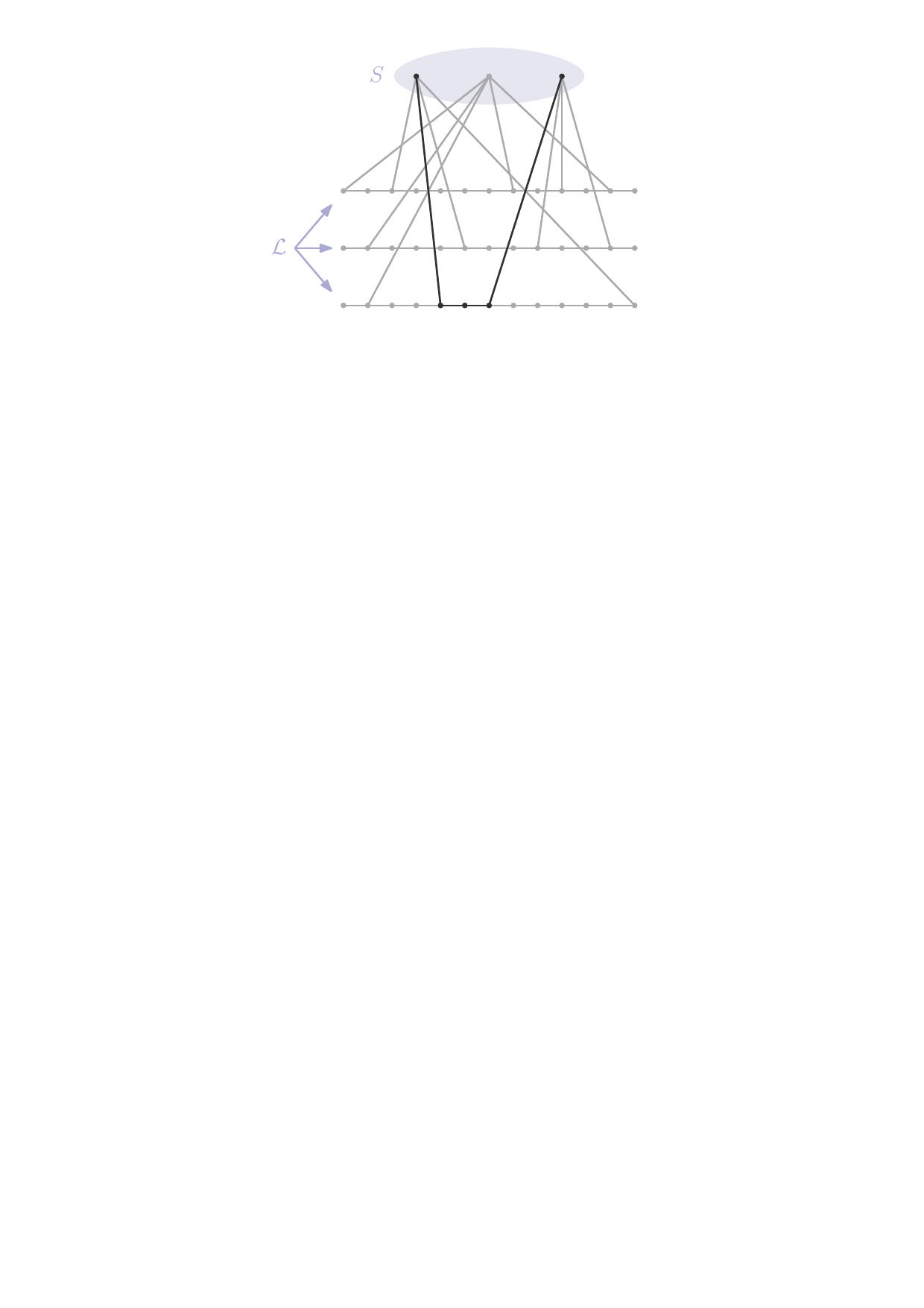}
    \caption{A $\mf{c}$-route of length four in the $(3,3)$-constellation $\mf{c}$ from Figure~\ref{fig:constellation}, which is $2$-ample.}
    \label{fig:route}
\end{figure}
\medskip

Let $\mf{c} = (S_{\mf{c}}, \mathcal{L}_{\mf{c}})$ be a constellation. By a \textit{$\mf{c}$-route} we mean a path $R$ in $\mf{c}$ with ends in $S_{\mf{c}}$ and with $R^*\subseteq V(\mca{L}_{\mf{c}})$, or equivalently, with $R^*\subseteq V(L)$ for some $L\in \mca{L}_{\mf{c}}$. For $d\in \poi$, we say $\mf{c}$ is \textit{$d$-ample} if there is no $\mf{c}$-route of length at most $d+1$. We also say $\mf{c}$ is \textit{ample} if $\mf{c}$ is $1$-ample (that is, no two vertices in $S_{\mf{c}}$ have a common neighbor in $V(\mca{L}_{\mf{c}})$; see Figure~\ref{fig:route}).

We say a constellation $\mf{c}$ is \textit{interrupted} if there is a linear order $\pre$ on $S_{\mf{c}}$ such that for all $x,y,z\in S_{\mf{c}}$ with $x\prec y\prec z$ and every $\mf{c}$-route $R$ from $x$ to $y$, the vertex $z$ has a neighbor in $R$ (see Figure~\ref{fig:interrupted}). We remark that occultations as in \cite{tw9,deathstar} are a special case of this (compare Figure~\ref{fig:pdo} (right) and Figure~\ref{fig:interrupted}).
\begin{figure}[t!]
    \centering
    \includegraphics[scale=0.6]{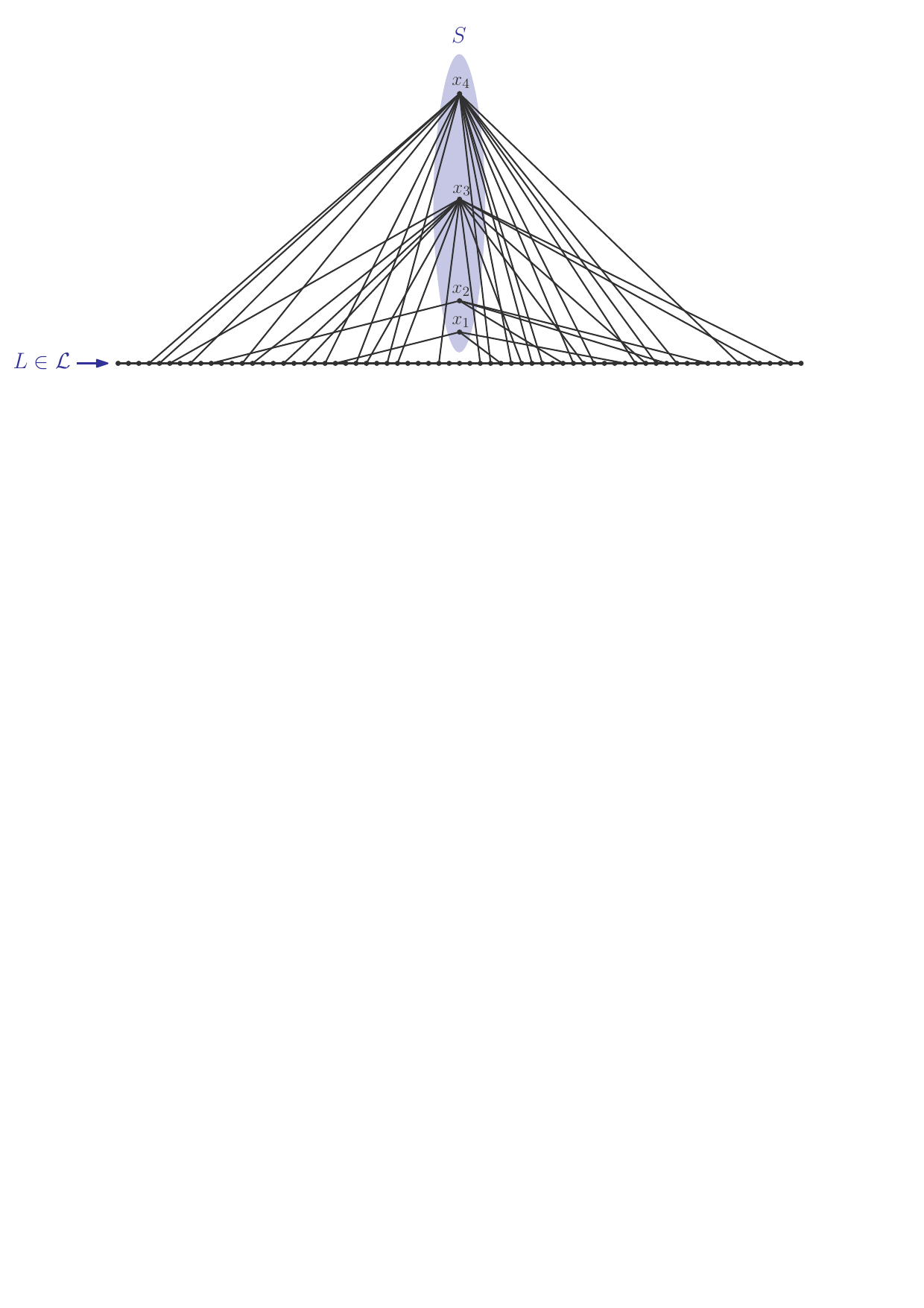}
    \caption{A $(4,1)$-constellation which is interrupted with $x_1\pre x_2\pre x_3\pre x_4$.}
    \label{fig:interrupted}
\end{figure}

For $q\in \poi$, we say a constellation $\mf{c}$ is \emph{$q$-zigzagged} if there is a linear order $\pre$ on $S_{\mf{c}}$ such that for all $x,y\in S_{\mf{c}}$ with $x\prec y$ and every $\mf{c}$-route $R$ from $x$ to $y$, there are fewer than $q$ vertices $z\in S_{\mf{c}}$ where $x\prec z\prec y$ and $z$ has no neighbor in $R$ (see Figure~\ref{fig:zigzag}).
\begin{figure}[t!]
    \centering
    \includegraphics[scale=0.6]{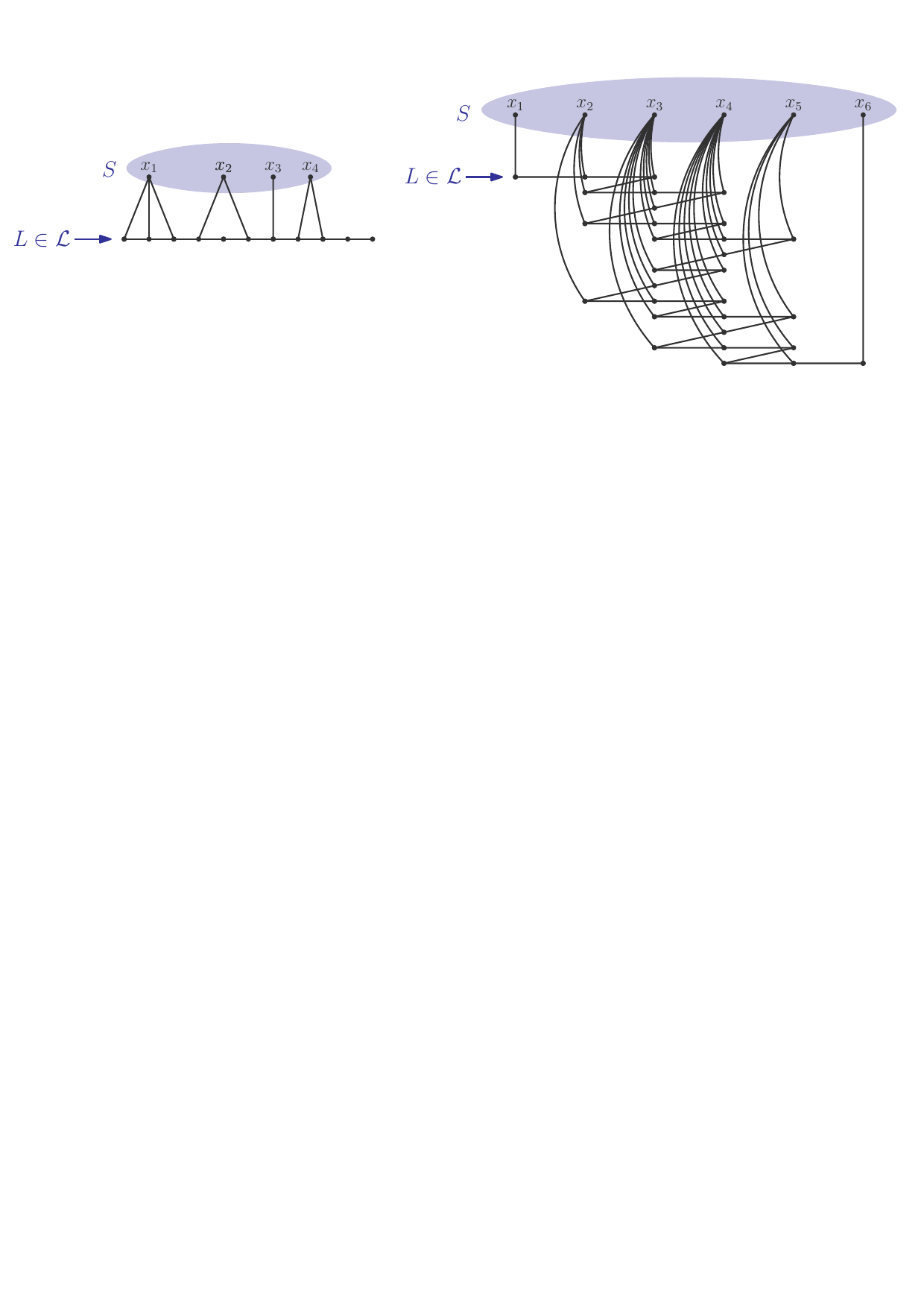}
    \caption{Left: A $(4,1)$-constellation which is $1$-zigzagged with $x_1\pre x_2\pre x_3\pre x_4$. Right: A $(6,1)$-constellation which is $1$-zigzagged with $x_1\pre x_2\pre x_3\pre x_4\pre x_5\pre x_6$.}
    \label{fig:zigzag}
\end{figure}
\medskip

We also need a canonical ``containment'' relation on constellations, which we define next. For constellations $\mf{b}$ and $\mf{c}$, we say \textit{$\mf{b}$ sits in $\mf{c}$} if
\begin{itemize}
    \item $S_{\mf{b}}\subseteq S_{\mf{c}}$;
    \item for every $L\in \mca{L}_{\mf{b}}$, there exists $M\in \mca{L}_{\mf{c}}$ such that $L\subseteq M$; and
    \item for every $M\in \mca{L}_{\mf{c}}$, there is at most one path $L\in \mca{L}_{\mf{b}}$ such that $L\subseteq M$.
\end{itemize}
See Figure~\ref{fig:sitting}. It follows that for constellations $\mf{b}$ and $\mf{c}$ where $\mf{b}$ sits in $\mf{c}$, the useful properties of $\mf{c}$ are also inherited by $\mf{b}$. In particular,
\begin{itemize}
\item $\mf{b}$ is an induced subgraph of $\mf{c}$;
    \item if $\mf{c}$ is $d$-ample for some $d\in \poi$, then so is $\mf{b}$;
    \item if $\mf{c}$ is interrupted, then so is $\mf{b}$; and
    \item if $\mf{c}$ is $q$-zigzagged for some $q\in \poi$, then so is $\mf{b}$.
\end{itemize}
\begin{figure}[t!]
    \centering
    \includegraphics[scale=0.6]{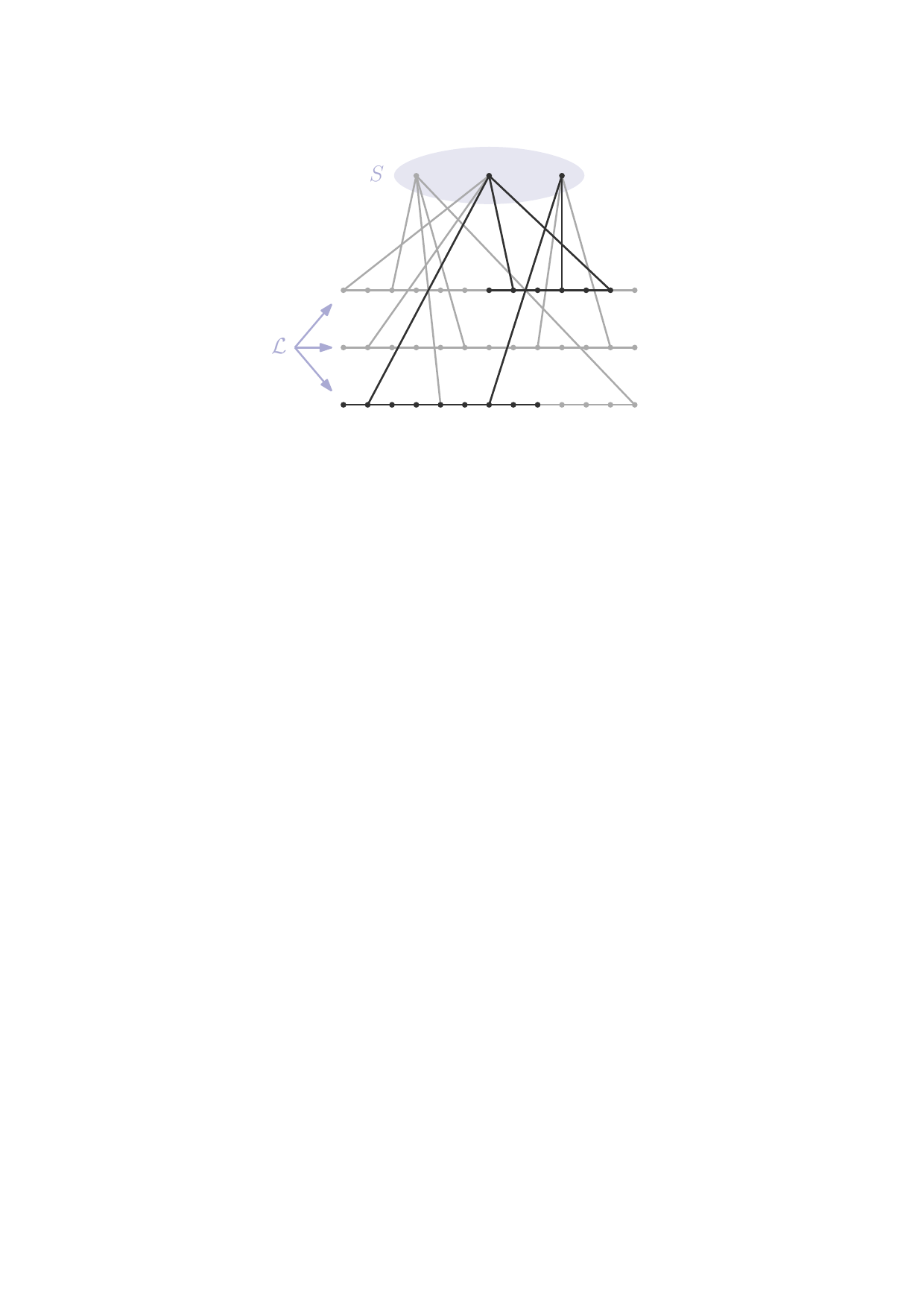}
    \caption{A $(2,2)$-constellation sitting in the $(3,3)$-constellation from Figure~\ref{fig:constellation}.}
    \label{fig:sitting}
\end{figure}
\medskip

We are now ready to state our second result:

\begin{restatable}{theorem}{mainconstellation}\label{thm:mainconstellation}
   For all $d,l,l',r,s,s',t\in \poi$, there are constants $f_{\ref{thm:mainconstellation}}=f_{\ref{thm:mainconstellation}}(d,l,l',r,s,s',t)\in \poi$ and $g_{\ref{thm:mainconstellation}}=g_{\ref{thm:mainconstellation}}(d,l,l',r,s,s',t)\in \poi$ such that for every $(f_{\ref{thm:mainconstellation}},g_{\ref{thm:mainconstellation}})$-constellation $\mf{c}$, one of the following holds.
   \begin{enumerate}[\rm (a)]
        \item\label{thm:mainconstellation_a}There is an induced subgraph of $\mf{c}$ isomorphic to either $K_{r,r}$ or a proper subdivision of $K_{2t+2}$.
        \item\label{thm:mainconstellation_b} There is a $d$-ample interrupted $(s,l)$-constellation which sits in $\mf{c}$. 
    \item\label{thm:mainconstellation_c} There is a $d$-ample $2t$-zigzagged $(s',l')$-constellation which sits in $\mf{c}$.
    \end{enumerate}
\end{restatable}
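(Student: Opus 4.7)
The plan is to prove Theorem~\ref{thm:mainconstellation} by extracting a suitably structured sub-constellation of $\mf{c}$ through a sequence of Ramsey-style reductions, followed by a case analysis to arrive at one of the three outcomes. First I would pass to a large $d$-ample sub-constellation of $\mf{c}$. If many pairs of vertices in $S_{\mf{c}}$ are joined by short $\mf{c}$-routes, a pigeonholing over the paths in $\mca{L}_{\mf{c}}$ yields $r$ vertices in $S_{\mf{c}}$ sharing $r$ neighbors through $r$ distinct paths in $\mca{L}_{\mf{c}}$, giving an induced $K_{r,r}$ and so outcome (a); otherwise, we retain a $d$-ample sub-constellation sitting in $\mf{c}$ with $|S_{\mf{c}}|$ and $|\mca{L}_{\mf{c}}|$ still very large.

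Next, I would install a canonical order on $S_{\mf{c}}$. Orient each $L \in \mca{L}_{\mf{c}}$ arbitrarily; for each $L$ this gives a linear order on $S_{\mf{c}}$ by leftmost-neighbor index. Iterated Erd\H{o}s--Szekeres across the paths of $\mca{L}_{\mf{c}}$ then yields a further sub-constellation on which all orientations agree with a single linear order $\pre$ on $S_{\mf{c}}$. With $\pre$ in hand, I would then test the two structured outcomes. If for every $x\prec y\prec z$ and every $\mf{c}$-route $R$ from $x$ to $y$ the vertex $z$ has a neighbor on $R$, we are in outcome (b). Otherwise, after possibly further sparsification (or reversing the order), test the $2t$-zigzagged condition: if every route from $x$ to $y$ is avoided by fewer than $2t$ intermediate $z$'s, we are in outcome (c). The remaining case produces, for many ordered pairs $(x,y)$, a $\mf{c}$-route $R_{x,y}$ skipping at least $2t$ intermediate $S$-vertices, together with interruption-failure witnesses.

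Finally, and this is the crux, I would build an induced proper subdivision of $K_{2t+2}$ from this remaining case. Using the abundance of long routes skipping many intermediates, together with the large reserve of paths in $\mca{L}_{\mf{c}}$ preserved through the earlier reductions, I would select $2t+2$ hub vertices in $S_{\mf{c}}$ and route $\binom{2t+2}{2}$ internally disjoint induced paths between them through distinct paths of $\mca{L}_{\mf{c}}$, with no unintended chords. The main obstacle is precisely this last construction: the $d$-ample assumption rules out accidental short cuts, and the large supply of paths in $\mca{L}_{\mf{c}}$ lets distinct edges of $K_{2t+2}$ be routed through distinct paths, but controlling chords \emph{and} internal disjointness across all $\binom{2t+2}{2}$ routes simultaneously demands a careful Ramsey argument applied to the skip-pattern structure coming from the joint failure of the interrupted and $2t$-zigzagged conditions. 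Ensuring the resulting subdivision is induced (and proper) is the delicate technical heart of the proof, and is where I expect essentially all the difficulty and bookkeeping to lie.
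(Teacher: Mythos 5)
Your outline leaves the central argument unproven, and you say so yourself: the conversion of the ``remaining case'' into an induced proper subdivision of $K_{2t+2}$ is exactly where the theorem lives, and ``a careful Ramsey argument applied to the skip-pattern structure'' is not an argument. The difficulty is not merely bookkeeping. The natural Ramsey step (colour each $(2t+2)$-subset $T$ of $S_{\mf{c}}$ by which pairs admit a route avoiding the rest of $T$ --- this is essentially what the paper does in Lemma~\ref{lem:gettingmixed}) only tells you that for a canonical pair $x\prec y$ and every route $R$ between them, \emph{some} vertex of $T\setminus\{x,y\}$ meets $R$; that vertex may lie before $x$ or after $y$ in the order, not between them. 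So the outcome of the Ramsey step is neither the interrupted condition (blocker after both ends) nor the zigzagged condition (blockers between the ends), but a hybrid, which the paper formalizes as a $(q^-,q,q^+)$-mixed constellation. Your plan has no analogue of this intermediate notion, and your trichotomy ``interrupted, else zigzagged, else subdivision'' does not follow from the failure of the first two for individual routes: a single route violating both conditions gives you nothing like the $\binom{2t+2}{2}$ pairwise anticomplete routes you need.

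The paper's resolution of this is the content of Lemma~\ref{lem:mixedtozigzag}: a double induction on $s+\sigma$ (where $\sigma=q^-+q^+$ counts the ``outside'' blockers) driven by the Ding--Seymour--Winkler bound $\tau\le 11a^2(a+a'+3)\binom{a+a'}{a'}^2$ (Theorem~\ref{thm:hglemma}) applied to hypergraphs $H_L$ recording which components of $L\setminus N_L(v)$ each remaining vertex attaches to, where $v$ is the last vertex in the order. Large $\nu(H_L)$ for many $L$ forces $v$ to meet every route in a sub-constellation and builds the interrupted outcome; large $\lambda(H_L)$ yields the induced subdivision of $K_{2t+2}$; otherwise small $\tau(H_L)$ lets you discard $v$ and decrement $\sigma$. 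Nothing in your proposal plays the role of this covering argument, and without it (or a substitute) the proof does not close. Separately, your step installing a canonical order via leftmost-neighbour indices and Erd\H{o}s--Szekeres is not needed for any of the three outcomes --- the interrupted and zigzagged conditions concern which vertices have neighbours on routes, not the positions of those neighbours along the paths --- and the paper simply fixes an arbitrary order; also your claimed pigeonhole producing an induced $K_{r,r}$ in the amplification step is not correct as stated (the paper instead cites Lemma~\ref{lem:getampletw9} together with Lemma~\ref{lem:pinnedasconj}, and the non-ample case yields a short subdivision of a large complete graph, not directly a $K_{r,r}$).
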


We also need the next result:

\begin{lemma}[Aboulker, Adler, Kim, Sintiari, Trotignon; see Lemma 3.6 in \cite{aboulker}] \label{lem:minor-to-ind}
    For every $r \in \poi$, there is a constant $f_{\ref{lem:minor-to-ind}} = f_{\ref{lem:minor-to-ind}}(r)$ such that if $G$ is a graph with an induced minor isomorphic to a subdivision of $W_{f_{\ref{lem:minor-to-ind}} \times f_{\ref{lem:minor-to-ind}}}$, then $G$ has an induced subgraph isomorphic to either a subdivision of $W_{r\times r}$ or the line graph of a subdivision of $W_{r\times r}$.
\end{lemma}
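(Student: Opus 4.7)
The plan is to start from the induced minor model $(B_v : v \in V(W'))$ of a subdivision $W'$ of a large wall $W_{f \times f}$ (with $f$ chosen much larger than $r$) and to extract from $G$ an induced subgraph of one of the two desired forms, by shrinking each branch set to a small canonical piece and invoking a Ramsey-style argument. For each edge $uv \in E(W')$ I would fix attachment vertices $p^v_{uv} \in B_v$ and $p^u_{uv} \in B_u$ adjacent in $G$, and for each $v$ set $A_v = \{p^v_{uv} : u \in N_{W'}(v)\}$; note $|A_v| = \deg_{W'}(v) \leq 3$. Then, for each $v$, I would choose $T_v \subseteq B_v$ minimal by vertex-inclusion such that $A_v \subseteq T_v$ and $G[T_v]$ is connected.

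Since $|A_v|\leq 3$, the induced subgraph $G[T_v]$ falls into a small number of structural types. For $|A_v| = 2$ it is simply an induced path between the two attachments. For $|A_v| = 3$ it is either a tree---and then either an induced subdivision of $K_{1,3}$ with leaves $A_v$, or an induced path through $A_v$---or it contains a non-trivial $2$-connected block, in which case, after a further reduction, one can arrange for $T_v$ to be a triangle on $A_v$. Next, I would invoke a Ramsey argument: for $f$ sufficiently large in terms of $r$, any constant-size coloring of the bricks of $W_{f \times f}$ contains a monochromatic induced sub-wall isomorphic to $W_{r \times r}$. Coloring each brick by the structural type of its $T_v$ and passing to a monochromatic sub-wall reduces the argument to two extremal cases.

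In the \emph{tree case}---where at every brick $v$, $T_v$ is a tree---the union $\bigcup_v T_v$ together with the fixed attachment edges is, after a cleanup step, an induced subgraph of $G$ isomorphic to a subdivision of $W_{r \times r}$. The Y subcase is direct; in the path subcase, the path $T_v$ together with the three outgoing attachment edges still contributes a $K_{1,3}$-subdivision, centered at the middle attachment vertex, which plays the role of the brick vertex of the output wall. In the \emph{triangle case}, the union is instead isomorphic to the line graph of a subdivision of $W_{r \times r}$: each brick contributes a triangle on $A_v$, and these triangles are joined via the paths through the degree-$2$ branch sets together with the attachment edges.

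The main technical obstacle will be the cleanup: $\bigcup_v T_v$ is not automatically induced as a subgraph of $G$, since for adjacent $u, v$ the branch sets $B_u, B_v$ may contain edges beyond the fixed attachment edge. Handling this requires local rerouting within each $T_v$---absorbing or trimming vertices as needed while preserving the structural type at each brick. A secondary, more routine obstacle is the monochromatic sub-wall statement, which follows from a Gallai-type grid coloring argument applied along each axis of the wall.
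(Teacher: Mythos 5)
This lemma is not proved in the paper at all: it is imported verbatim from Aboulker, Adler, Kim, Sintiari and Trotignon (Lemma~3.6 of \cite{aboulker}), so there is no in-paper argument to compare yours against. Judged on its own, your outline follows the natural (and, in spirit, the standard) strategy, but two points need attention. First, your structural classification at a degree-$3$ branch vertex is stated incorrectly. A vertex-minimal connected induced subgraph $T_v$ containing three fixed terminals $A_v$ has every non-terminal vertex a cutvertex; a short block-decomposition argument then shows that every $2$-connected block has at most three vertices (each vertex of such a block is a terminal or leads to a distinct terminal), hence is a triangle, and that there is at most one such block. So the non-tree case is ``a triangle joined to the three terminals by pairwise anticomplete, possibly trivial induced paths,'' not ``a triangle on $A_v$'' --- no further reduction can move the triangle onto the fixed attachment vertices. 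This does not hurt you, since a triangle with three pendant paths is exactly the local picture in the line graph of a subdivision of a wall, but the claim as written is false and the classification itself is a lemma you would have to prove.

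The genuine gap is the step you defer to ``cleanup by local rerouting.'' That is where essentially all of the work in a proof of this kind lives, and nothing in your sketch guarantees it can be done while preserving the types fixed by the Ramsey step. Concretely: for adjacent $u,v\in V(W')$ the sets $T_u$ and $T_v$ may be joined by many edges; the usual fix is to replace each subdivided edge of the wall by an induced path through the union of the branch sets along it, but such a path may attach to the terminal gadget $T_v$ at a degree-$3$ branch vertex in several places, forcing you to truncate the path at its first neighbor of $T_v$ --- which changes the attachment vertices of $T_v$, and a minimal connected set for the \emph{new} attachments may have a different type than the one you Ramsey'd on. Breaking this circularity (for instance by defining $T_v$ relative to attachment \emph{neighborhoods} rather than fixed attachment vertices, and only then choosing the connectors) is the heart of the proof, and the proposal does not supply it. The Ramsey step itself is fine: a double-pigeonhole on rows and columns yields a monochromatic sub-wall, as you say.
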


We can now state the main result of this paper, which follows directly from combining Theorems~\ref{thm:main_ind_minor} and \ref{thm:mainconstellation} with Lemma~\ref{lem:minor-to-ind}.

\begin{theorem}\label{thm:motherKtt}
For all $d,l,l',r,s,s'\in \poi$, there are constants $f_{\ref{thm:motherKtt}}=f_{\ref{thm:motherKtt}}(d,l,l',r,s,s')\in \poi$ and $g_{\ref{thm:motherKtt}}=g_{\ref{thm:motherKtt}}(d,l,l',r,s,s')\in \poi$ with the following property. Let $G$ be a graph which has an induced minor isomorphic to $K_{f_{\ref{thm:motherKtt}}, g_{\ref{thm:motherKtt}}}$. Then one of the following holds.
   \begin{enumerate}[\rm (a)]
        \item\label{thm:motherKtt_a}There is an induced subgraph of $G$ isomorphic to either $K_{r,r}$, a subdivision of $W_{r\times r}$, or the line graph of a subdivision of $W_{r\times r}$.
        \item\label{thm:motherKtt_b} There is a $d$-ample and interrupted $(s,l)$-constellation in $G$. 
    \item\label{thm:motherKtt_c} There is a $d$-ample and $2r^2$-zigzagged $(s',l')$-constellation in $G$.
    \end{enumerate}
\end{theorem}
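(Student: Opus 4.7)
The plan is to chain the three ingredients stated in the excerpt in the natural order: apply Theorem~\ref{thm:main_ind_minor} to the given $K_{f,g}$ induced minor, then either hand the resulting wall induced minor to Lemma~\ref{lem:minor-to-ind} or hand the resulting constellation to Theorem~\ref{thm:mainconstellation}. First I would set $t := r^2$, so that the ``$2t$-zigzagged'' conclusion of Theorem~\ref{thm:mainconstellation}(c) matches the ``$2r^2$-zigzagged'' demanded by outcome~(c) of Theorem~\ref{thm:motherKtt}. Let $r_0 := f_{\ref{lem:minor-to-ind}}(r)$, let $(f_2, g_2) := \bigl(f_{\ref{thm:mainconstellation}}(d,l,l',r,s,s',t),\, g_{\ref{thm:mainconstellation}}(d,l,l',r,s,s',t)\bigr)$, and finally take $f_{\ref{thm:motherKtt}} = g_{\ref{thm:motherKtt}} := f_{\ref{thm:main_ind_minor}}(g_2, r_0, f_2)$.

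Given $G$ with an induced minor isomorphic to $K_{f_{\ref{thm:motherKtt}}, g_{\ref{thm:motherKtt}}}$, Theorem~\ref{thm:main_ind_minor} applied with parameters $(g_2, r_0, f_2)$ delivers either an induced minor of $G$ isomorphic to $W_{r_0\times r_0}$ or an $(f_2, g_2)$-constellation $\mf{c}$ in $G$. In the first case, $W_{r_0\times r_0}$ is trivially a subdivision of itself, so Lemma~\ref{lem:minor-to-ind} with parameter $r$ promotes it to an induced subgraph of $G$ isomorphic to a subdivision of $W_{r\times r}$ or the line graph of one, which is outcome (a). In the second case, I would apply Theorem~\ref{thm:mainconstellation} to $\mf{c}$ with parameters $(d,l,l',r,s,s',t)$: its outcomes (b) and (c) transfer verbatim to outcomes (b) and (c) of Theorem~\ref{thm:motherKtt}, since every constellation sitting in $\mf{c}$ is an induced subgraph of $\mf{c}$, and hence of $G$, and, as the excerpt records, sitting preserves $d$-ampleness, being interrupted, and being $2t$-zigzagged.

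The only step requiring an explicit argument is outcome (a) of Theorem~\ref{thm:mainconstellation}: an induced $K_{r,r}$ in $\mf{c}$ is immediately outcome (a) of Theorem~\ref{thm:motherKtt}, but a proper subdivision $H$ of $K_{2t+2} = K_{2r^2+2}$ needs to be turned into an induced subdivision of $W_{r\times r}$. To do this I would fix an embedding of $W_{r\times r}$ as a subgraph of $K_{2r^2+2}$ (which exists provided $2r^2+2 \geq |V(W_{r\times r})|$, as holds under the wall convention used in the paper) and take the induced subgraph of $H$ consisting of the branch vertices corresponding to vertices of $W_{r\times r}$ together with the subdivision paths of $H$ corresponding to edges of $W_{r\times r}$. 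This induced subgraph is itself a subdivision of $W_{r\times r}$: distinct branch vertices are non-adjacent because $H$ is \emph{proper}, interiors of distinct subdivision paths are pairwise disjoint and anticomplete by definition, and no internal vertex is adjacent to an unrelated branch vertex. So outcome (a) of Theorem~\ref{thm:motherKtt} holds again. The whole argument is a packaging exercise; the only mild subtlety, to the extent there is one, is the vertex-count check just outlined.
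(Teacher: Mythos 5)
Your proof is correct and is essentially the argument the paper intends: the paper gives no written proof of Theorem~\ref{thm:motherKtt} beyond the remark that it ``follows directly from combining Theorems~\ref{thm:main_ind_minor} and \ref{thm:mainconstellation} with Lemma~\ref{lem:minor-to-ind},'' and your chaining of those three results with $t=r^2$ is exactly that combination, including the correct observation that a \emph{proper} subdivision of $K_{2t+2}$ contains an induced proper subdivision of any subgraph of $K_{2t+2}$. The one point you rightly flag --- that $W_{r\times r}$ must have at most $2r^2+2$ vertices for the embedding into $K_{2r^2+2}$ to exist --- is precisely the (unstated) reason the paper's constant is $2r^2$, so your treatment is consistent with the paper's.
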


It remains to prove Theorems~\ref{thm:main_ind_minor} and \ref{thm:mainconstellation}, which we will do in Sections~\ref{sec:finale1} and \ref{sec:bip}.
\medskip

This paper is organized as follows. In Section \ref{sec:zig}, we will give a construction showing that the third outcome of Theorem \ref{thm:motherKtt} cannot be strengthened to yield Pohoata-Davies-like graphs. In Section~\ref{sec:necessary}, we will prove that interrupted constellations and zigzagged constellations are both unavoidable as outcomes of Theorems~\ref{thm:mainconstellation} and \ref{thm:motherKtt}. Then, we proceed to the proof of Theorem \ref{thm:mainconstellation}: In Section \ref{sec:ample}, we show that we can restrict our attention to $d$-ample constellations. In Section \ref{sec:mixed}, we obtain a precursor to outcomes \ref{thm:mainconstellation_b} and \ref{thm:mainconstellation_c} of Theorem \ref{thm:mainconstellation} in which a ``mixture'' of the two cases holds. Section \ref{sec:zigzagged} recovers these two distinct outcomes, and Section \ref{sec:finale1} finishes the proof of Theorem \ref{thm:mainconstellation}. Finally, in Section \ref{sec:bip}, we prove Theorem \ref{thm:main_ind_minor}, hence completing the proof of Theorem~\ref{thm:motherKtt}.

\section{The zigzag graph} \label{sec:zig}

Before we continue to our main proofs, let us expand on Theorem \ref{thm:mainconstellation}\ref{thm:mainconstellation_c}. One might hope that this outcome could be refined further to coincide with the Pohoata-Davies graphs \cite{davies,pohoata2014unavoidable} or more precisely, with ``arrays,'' which are slight generalizations of Pohoata-Davies graphs \cite{tw12}. Let $\mf{c}$ be a constellation. We say that $\mf{c}$ is \emph{aligned} if there is a linear order $\pre$ on $S_\mf{c}$ such that every path $L$ in $\mathcal{L}_{\mf{c}}$ traverses neighbors of $S_{\mf{c}}$ in order, that is, there is a labelling $v_1, \dots, v_t$ of the vertices of $L$ such that for all $i, j \in \poi_t$: 
\begin{enumerate}
    \item $v_iv_j \in E(L)$ if and only if $|i-j| = 1$; and 
    \item if $s, s' \in S_\mf{c}$ with $s \prec s'$ such that $v_i \in N(s)$ and $v_j \in N(s')$, then $i < j$.
\end{enumerate}
Arrays are aligned constellations; Pohoata-Davies graphs have some further restrictions. 

Writing $x_1, \dots, x_s$ for the vertices of $S_{\mf{c}}$, each path $L$ of $\mathcal{L}_{\mf{c}}$ gives rise to a sequence $A_L$ (unique up to reversal) obtained by recording, as we traverse $L$, the indices of vertices in $S_{\mf{c}}$ whose neighbors we encounter. For example, in Figure \ref{fig:zigzag}, this sequence would read $1, 1, 1, 2, 2, 3, 4, 4$ for the graph of the left, and begin with $1, 2, 3, 2, 3, 4$ for the graph on the right. A constellation is aligned (with the linear order $x_1 \pre \cdots \pre x_s$), then, if for every $L \in \mathcal{L}_{\mf{c}}$, we have $A_L = 1, 2, \dots, s$ after omitting consecutive occurrences of the same number (and up to reversal). 

Let us say that a sequence $A = a_1, \dots, a_t$ of integers is \emph{smooth} if consecutive entries differ by at most 1. Thus, a constellation $\mf{c}$ is $1$-zigzagged if and only if there is a linear order on $\mf{c}$ such that $A_L$ is smooth for every $L \in \mathcal{L}_{\mf{c}}$. 

One might hope that if $\mf{c}$ is a sufficiently large 1-zigzagged constellation, then there is a large aligned constellation which sits in $\mf{c}$. Translating this to sequences, we would hope that for every smooth sequence $A = a_1, \dots, a_t$ with $|\{a_1, \dots, a_t\}|$ sufficiently large, there is a large subset $B \subseteq \{a_1, \dots, a_t\}$ as well as $i, j \in \poi_t$ such that the sequence $A^{B, i, j}$, obtained from $a_i, \dots, a_j$ by skipping all entries which are not in $B$ and then omitting consecutive entries that are equal, is a sequence containing each element of $B$ exactly once. Let us say that $B$ is an \emph{alignment in $A$} in this case; we call $|B|$ the \emph{size} of the alignment. We will show that for every $n$, there is a smooth sequence containing the numbers $1, \dots, n$ with no alignment of size 4. We call this sequence the \textit{zigzag sequence}. The associated constellation is 1-zigzagged, but no aligned constellation with a stable set of size 4 sits in it. 

To define the zigzag sequence, we need some further definitions. Given a sequence $A = a_1, \dots, a_t$, we define $A + 1$ to be the sequence $a_1+1, a_2+1, \dots, a_t+1$. Moreover, we write $A^{-1}$ for the \emph{reverse of $A$}, that is, the sequence $a_t, a_{t-1}, \dots, a_1$. We define $A^*$ to be the sequence $a_1, \dots, a_{t-1}$. 

We are now ready to define the \emph{zigzag sequence} $Z_n$ for $n \in \mathbb{N}_{\geq 2}$. We let $Z_2 = 1,2$ and $Z_3 = 1, 2,3$. For $n \geq 4$, we define to be the sequence $Z_n = Z_{n-1}^*, ((Z_{n-2}+1)^{-1})^*, (Z_{n-1})+1$.  
Thus, $$Z_4 = 1, 2, 3, 2, 3, 4$$
and 
$$Z_5 = 1, 2, 3, 2, 3, 4, 3, 2, 3, 4, 3, 4, 5.$$
From the definition, it follows that $Z_n$ is a smooth sequence containing the numbers $1, \dots, n$. In addition, replacing each entry $a_i$ of $Z_n$ with $n+1-a_i$ yields the sequence $(Z_n)^{-1}$. We write $l_n$ for the length of $Z_n$. 

\begin{lemma} \label{lem:align}
    $Z_n$ contains no 4-alignment. 
\end{lemma}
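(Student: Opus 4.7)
The plan is to proceed by strong induction on $n$, with base cases $n\leq 4$ handled by direct inspection (in $Z_4$ the only candidate is $B=\{1,2,3,4\}$, and neither $1,2,3,4$ nor $4,3,2,1$ appears as a consecutive substring of its $B$-filtered collapsed sequence $1,2,3,2,3,4$). For the inductive step, suppose for a contradiction that $B=\{b_1<b_2<b_3<b_4\}$ is a $4$-alignment in $Z_n$ realized in some range $[i,j]$. The reversal-and-flip symmetry of $Z_n$ lets me assume the alignment is increasing, i.e., the collapsed $B$-filtered subsequence on $[i,j]$ reads $b_1,b_2,b_3,b_4$. A short smoothness argument shows that consecutive entries $c_k,c_{k+1}$ of any collapsed alignment must be neighbors in the order on $B$: the walk from the last $c_k$ to the first $c_{k+1}$ takes $\pm 1$ steps and so must visit every integer strictly between them, which would conflict with alignment if any element of $B$ lay in that range. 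For increasing alignments this condition is automatic.

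The key preparation is to rewrite the recursion cleanly as $Z_n = Z_{n-1}\cdot W_n\cdot (Z_{n-1}+1)$ for $n\geq 4$, where $W_n$ has length $l_{n-2}-2$, is the interior fragment of $(Z_{n-2}+1)^{-1}$, and takes values only in $\{3,\dots,n-2\}$; this works because $Z_{n-1}$ ends with $n-1$, which coincides with the first entry of the middle block $((Z_{n-2}+1)^{-1})^*$ in the original definition and can therefore be absorbed into the $Z_{n-1}$-prefix. From this decomposition I would record: value $1$ occurs in $Z_n$ only at position $1$; value $n$ only at position $l_n$; value $n-1$ occurs in $Z_{n-1}\cdot W_n$ only at position $l_{n-1}$; and more generally, for $k\geq 2$, the first occurrence of $k$ in $Z_n$ is at position $l_k$, with the symmetric statement for the last occurrence.

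The case analysis splits on which of the three portions $Z_{n-1}$, $W_n$, $Z_{n-1}+1$ the range $[i,j]$ meets. If $[i,j]$ lies inside a single portion, the alignment restricts to a $4$-alignment in $Z_{n-1}$ or, via the reversal and value-shift built into $W_n$, in $Z_{n-2}$, contradicting induction. If $[i,j]$ straddles $Z_{n-1}$ and $W_n$ but misses $Z_{n-1}+1$, I split on $b_4$: when $b_4=n-1$, the unique occurrence of $n-1$ in $Z_{n-1}\cdot W_n$ at position $l_{n-1}$ pins the $b_4$-block to a single point and pulls the whole alignment back into the copy of $Z_{n-1}$ on $[i,l_{n-1}]$, contradicting induction; when $b_4\leq n-2$, I further split on whether the first $b_4$ in $[i,j]$ lies in the $Z_{n-1}$ portion (yielding once more a $4$-alignment in $Z_{n-1}$) or in $W_n$, in the latter case using $W_n\subseteq (Z_{n-2}+1)^{-1}$ together with the embedding of $Z_{n-2}$ into $Z_{n-1}$ (from the same recursion applied to $n-1$) to transfer a $4$-alignment back to $Z_{n-2}$. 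The symmetric straddling case and the case in which $[i,j]$ meets all three portions are handled by the reversal-and-flip symmetry and by combining the two boundary analyses above, using that both $n-1$ (at position $l_{n-1}$) and, by symmetry, $2$ (at position $l_n-l_{n-1}+1$) have isolated occurrences at the two straddling interfaces.

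The main obstacle I expect is the sub-case where $[i,j]$ straddles $Z_{n-1}$ and $W_n$ with $b_4\leq n-2$ and the first $b_4$ in $[i,j]$ lies inside $W_n$: here neither portion alone carries a $4$-alignment in a smaller $Z_m$, and the argument must glue the $Z_{n-1}$-part and the $W_n$-part together, using both of the recursive embeddings just described, to exhibit a $4$-alignment in $Z_{n-2}$ after an appropriate reversal and value-shift. The remaining cases reduce cleanly once this key step is in place.
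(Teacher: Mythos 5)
Your proposal takes a genuinely different route from the paper's, and the difference matters: you induct on $n$ alone and then attempt a case analysis on where the range $[i,j]$ falls relative to the three blocks of the decomposition $Z_n = Z_{n-1}\cdot W_n\cdot(Z_{n-1}+1)$, whereas the paper picks a counterexample minimizing $n$ \emph{and then} $j-i$. That second minimization is doing real work that your approach misses: once $j-i$ is minimal, the endpoint values must be $p$ and $s$ themselves, and the paper can then rule out $p\geq 2$ by a short confinement argument (a $p\geq 2$ alignment would avoid the value $2$ in its interior and hence be confined to a single block, contradicting minimality of $n$), concluding that $p=1$, $s=n$, and $[i,j]=[1,l_n]$. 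After that the contradiction is almost immediate, since for $p<q<r<s$ the value $q$ appears in both $Z_{n-1}^*$ and $(Z_{n-1})+1$ while $r$ appears in $((Z_{n-2}+1)^{-1})^*$, so the collapsed $B$-filtered sequence contains $q,\ldots,r,\ldots,q$ and is not an alignment.

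The gap in your proposal is the one you flag yourself: the sub-case where $[i,j]$ straddles $Z_{n-1}$ and $W_n$, $b_4\leq n-2$, and the first occurrence of $b_4$ in $[i,j]$ lies in $W_n$. You say the argument ``must glue the $Z_{n-1}$-part and the $W_n$-part together\ldots to exhibit a $4$-alignment in $Z_{n-2}$,'' but you do not say how, and it is not at all clear that the intended reversal/shift does this: the relevant prefix of $W_n$ is the mirror image (around position $l_{n-1}$) of a suffix of $Z_{n-1}$, and a filtered sequence drawn partly from $Z_{n-1}$ and partly from this mirrored piece does not obviously pull back to a single copy of $Z_{n-2}$. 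This straddling case is precisely what the paper's $j-i$-minimality eliminates at the outset, so as written your proof has a genuine hole at its hardest step. You would either need to supply the gluing argument explicitly (and it may well require a stronger inductive hypothesis than ``no $4$-alignment''), or, more economically, add the secondary minimization on $j-i$, which collapses all the straddling cases to the single case $[i,j]=[1,l_n]$ that you can then handle as the paper does. A minor additional remark: your observation that consecutive entries of the collapsed alignment must be neighbors in $B$ is correct but, as you note, automatic for increasing alignments and doesn't carry weight in the argument.
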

\begin{proof}
    Suppose for a contradiction that $\{p, q, r, s\}$ is an alignment in the sequence $A = z_i, \dots, z_j$ for some $i < j$, where $Z_n = z_1, \dots, z_{l_n}$. We choose this counterexample with $n$ minimum. We may further choose this sequence with $j-i$ minimum, and so we may assume that $z_i = p$ and $z_j = s$ and $z_k \not\in \{p, s\}$ for $k \in \{i+1, \dots, j-1\}$. By symmetry, we may assume that $p < s$. It follows that $z_{i+1}, \dots, z_{j-1} \in \{p+1, \dots, s-1\}$.  

    We claim that $p = 1$. Suppose not, so $p \geq 2$. It follows that all entries of $A$ except possibly $z_i$ are strictly larger than 2, and so $A$ is contained in either $(Z_{n-1})+1$ or $Z_{n-1}^*, ((Z_{n-2}+1)^{-1})^*, 2 = Z_{n-1}^*, ((Z_{n-2}+1)^{-1})$. The former contradicts the minimality of $n$, and so the latter case holds. In particular, $A$ does not contain $n$, and so $s \leq n-1$. Since the first entry of $((Z_{n-2}+1)^{-1})$ is $n-1$, it follows that $A$ is contained in one of $Z_{n-1}^*, n-1 = Z_{n-1}$ or $((Z_{n-2}+1)^{-1})$; however, both contradict the minimality of $n$. Thus, $p = 1$, and by symmetry, $s = n$. It follows that $i = 1$ and $j = l_n$, and so $A = Z_n$. 

    By symmetry, we may assume that $q < r$. Then, 
    $$A = Z_n = Z_{n-1}^*, ((Z_{n-2}+1)^{-1})^*, (Z_{n-1})+1$$
    and $Z_{n-1}^*$ contains $q$ (and possibly $r$), $((Z_{n-2}+1)^{-1})^*$ contains $r$ (and possibly $q$), and $(Z_{n-1})+1$ contains $q$ (and possibly $r$); but this contradicts the assumption that $\{p, q, r, s\}$ is an alignment in $A$, hence completing the proof.    
\end{proof}

The \emph{zigzag graph} $GZ_n$ is defined as follows. Let $Z_n = z_1, \dots, z_{l_n}$. We let $V(GZ_n) = \{p_1, \dots, p_{l_n}, x_1, \dots, x_n\}$ and 
$$E(GZ_n) = \{p_ip_{i+1} : i \in \{1, \dots, l_n-1\} \} \cup \{x_ip_j : i \in \{1, \dots, n\}, j \in \{1, \dots, l_n\}, z_j = i\}.$$
This is an $(n, 1)$-constellation which is $1$-zigzagged; Figure \ref{fig:zigzag} (right) shows $GZ_6$. By making $l$ copies of the path $p_1, \dots, p_{l_n}$, we obtain an $(n, l)$-constellation $GZ_n^l$. Using Lemma \ref{lem:align}, one can show that $GZ_n^l$ contains no large array (we omit the proof). It follows that it is not possible to simplify  Theorem \ref{thm:mainconstellation}\ref{thm:mainconstellation_c} to the case of arrays. We leave open the question of whether ``$2t$-zigzagged'' can be improved to ``$1$-zigzagged.''

\section{The interrupted and the zigzagged outcome are both necessary}\label{sec:necessary}
Our goal in this section is to prove that Theorems~\ref{thm:mainconstellation} and \ref{thm:motherKtt} are ``best possible'' in the sense that both the interrupted and the zigzagged constellations are unavoidable outcomes of those theorems.

\subsection{The interrupted case} For $r\in \poi$, we denote by $T_r$ the full binary tree of radius $r$ (on $2^{r+1}-1$ vertices). It is well-known \cite{GM1} that for every $r\in \poi$, all subdivisions of $T_{2r}$ and their line graphs have \textit{pathwidth} at least $r$ (where the pathwidth of a graph $G$ is denoted by $\pw(G)$; see \cite{diestel} for a definition). 

Note that all constellations are $K_4$-free, and all ample constellations are $K_{3,3}$-free. Thus, to show that interrupted constellations cannot be omitted from the outcomes of Theorems~\ref{thm:mainconstellation} and \ref{thm:motherKtt}, it is enough to prove the following (we remark that the second bullet is not necessary for our purposes here; however, it will be used in a future paper \cite{tw18}):

\begin{theorem}\label{thm:interruptedoutcome}
Let $\mf{c}$ be an ample interrupted constellation. Then $\mf{c}$ has no induced subgraph isomorphic to any of the following.
    \begin{itemize}
       \item An ample $q$-zigzagged $\left(3q+6,6\binom{q+2}{3}\right)$-constellation, for $q\in \poi$.
        \item A subdivision of $T_{7}$ or the line graph of a subdivision of $T_{7}$. 
        \item A subdivision of $W_{6\times 6}$ or the line graph of a subdivision of $W_{6\times 6}$. 
    \end{itemize}
\end{theorem}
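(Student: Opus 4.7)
The three cases rest on a structural ``barrier lemma'' for ample interrupted constellations that I would establish first. Fix such a $\mf{c}=(S_{\mf{c}},\mca{L}_{\mf{c}})$ with interrupting order $x_1\prec\cdots\prec x_s$, and label each $v\in V(\mca{L}_{\mf{c}})$ by the unique $i\in \poi_s$ with $v\in N(x_i)$ (unique by ampleness), leaving $v$ unlabelled otherwise. For any $L\in\mca{L}_{\mf{c}}$ and any two labelled vertices $v,w\in V(L)$ with labels $i\neq j$, and for any $k$ with $\max(i,j)<k\leq s$, some vertex strictly between $v$ and $w$ on $L$ carries label $k$. Indeed, the $\mf{c}$-route with ends $x_i,x_j$ whose interior is the subpath of $L$ from $v$ to $w$ must contain a neighbour of $x_k$ by the interrupted property, while ampleness rules out $v$ and $w$ themselves. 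Iterating with the sub-constellation $(S_{\mf{c}}\setminus\{x_s\},\mca{L}_{\mf{c}})$, which remains ample and interrupted, endows each $L$ with a nested block decomposition: label-$s$ vertices form top-level barriers; within each top-level block, label-$(s{-}1)$ vertices form second-level barriers; and so on.

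\textbf{Case of zigzagged constellations.} Suppose for contradiction that an ample $q$-zigzagged $(3q+6,\,6\binom{q+2}{3})$-constellation $\mf{b}$ is induced in $\mf{c}$, witnessed by $y_1\prec_{\mf{b}}\cdots\prec_{\mf{b}} y_{3q+6}$. Split $S_{\mf{b}}$ into three consecutive blocks $Y^-,Y^0,Y^+$ of size $q+2$ each. The $q$-zigzagged property implies that on every path $L_{\mf{b}}\in \mca{L}_{\mf{b}}$ (regarded as a path in $\mf{c}$), between any vertex adjacent to some $y^-\in Y^-$ and any vertex adjacent to some $y^+\in Y^+$, all but at most $q-1$ members of $Y^0$ must have a neighbour on the intervening subpath; in particular, at least three do. Using the nested-barrier decomposition of $\mf{c}$, together with a pigeonhole over the $6\binom{q+2}{3}$ paths in $\mca{L}_{\mf{b}}$ indexed by ordered triples of ``witness labels'' drawn from $Y^0$, I would locate a single $L_{\mf{b}}$ together with a triple of barrier positions on $L_{\mf{b}}$ (from vertices whose labels lie in $S_{\mf{c}}\setminus S_{\mf{b}}$) that ``sweeps out'' all $Y^0$-neighbours between a $Y^-$-neighbour and a $Y^+$-neighbour on $L_{\mf{b}}$. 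This yields a $\mf{b}$-route witnessing violation of $q$-zigzaggedness.

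\textbf{Cases of subdivided $T_7$, $W_{6\times 6}$, and their line graphs.} All four targets have maximum degree three and contain three internally disjoint long paths meeting at a common branching vertex; in the line-graph versions, each triangle corresponds to a degree-three vertex of the pre-image. Suppose an induced copy $H$ of one of these appears in $\mf{c}$. For the subdivision cases, each branching vertex $v$ of $H$ is either a hub $x_i\in S_{\mf{c}}$ (with at most three $H$-neighbours, bounding the number of hubs used) or a labelled path vertex with its two path-neighbours and its unique hub-neighbour all in $H$. The barrier lemma caps the structure of a long induced path of $H$ lying inside a single $L\in\mca{L}_{\mf{c}}$, and forces three long paths meeting at a hub to share a top-level barrier vertex, creating an additional $H$-edge incident to that vertex and breaking max-degree three. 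For the line-graph cases, I would translate the constraint to the triangle structure: in an ample interrupted constellation every triangle has the ``comb'' form $\{x_i,u,v\}$ with $uv\in E(L)$ for some $L\in\mca{L}_{\mf{c}}$, and such comb triangles at distinct hubs are vertex-disjoint; this rigid triangle landscape cannot reproduce the branching triangle arrangements of $L(T_7^{\text{subdiv}})$ and $L(W_{6\times 6}^{\text{subdiv}})$.

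\textbf{Main obstacle.} The zigzagged case is the technical heart. While the barrier lemma gives a rigid skeleton on each $L\in\mca{L}_{\mf{c}}$, paths of $\mca{L}_{\mf{b}}$ need not lie inside a single $L$, nor even inside $V(\mca{L}_{\mf{c}})$---they may thread through vertices of $S_{\mf{c}}\setminus S_{\mf{b}}$---so aligning the two orders $\prec$ and $\prec_{\mf{b}}$ with the pigeonhole template is delicate, and the three-way split into $Y^-,Y^0,Y^+$ of size $q+2$ together with a triple-based pigeonhole of size $6\binom{q+2}{3}$ is precisely what this alignment requires. The subdivision and line-graph cases are comparatively routine but still call for careful case-analysis of how long induced paths of the target graph interact with the barrier decomposition.
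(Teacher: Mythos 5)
Your approach is genuinely different from the paper's, and it is not complete: the key steps in all three bullet cases are sketches rather than proofs.

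The paper's proof is much shorter and rests on two auxiliary lemmas. The first (their Lemma~\ref{lem:deathstar}) says that if $X_1,X_2$ are anticomplete induced subgraphs of an ample interrupted constellation $\mf{c}$, then for some $i\in\{1,2\}$, every component of $X_i$ meets $S_{\mf{c}}$ in at most one vertex. Since such a component is an induced subgraph of a path together with possibly one hub vertex, it has pathwidth at most $2$; hence \emph{$\mf{c}$ contains no two anticomplete induced subgraphs both of pathwidth greater than $2$}. The second (their Lemma~\ref{lem:eez_zigzag2}) says that an ample $q$-zigzagged $\bigl(2c+3q, 2c\binom{c+q-1}{c}\bigr)$-constellation contains two anticomplete induced $(c,c)$-constellations, hence two anticomplete pieces of treewidth at least $c$. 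Taking $c=3$ gives the first bullet. For the second and third bullets, the paper just notes that (the line graph of) a subdivision of $T_7$ contains two anticomplete copies of (the line graph of) a subdivision of $T_6$, and (the line graph of) a subdivision of $W_{6\times 6}$ contains two anticomplete copies of (the line graph of) a subdivision of $W_{3\times 3}$, each of pathwidth at least $3$. The key insight is that \emph{anticompleteness} is the right lens: one never has to trace how the forbidden structure actually embeds in $\mf{c}$.

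Your proposal instead tries to follow the embedding directly, via a ``nested barrier'' structure on the paths of $\mca{L}_{\mf{c}}$. The barrier lemma you describe is plausible and captures some of the same intuition as the paper's Lemma~\ref{lem:deathstar}, but the subsequent arguments do not go through as written. In the zigzagged case, the conclusion of your pigeonhole (``locate a triple of barrier positions that sweeps out all $Y^0$-neighbours \dots This yields a $\mf{b}$-route witnessing violation of $q$-zigzaggedness'') is a statement of intent with no mechanism given; it is not clear what the ``sweeping'' triple is, or why its existence produces a bad $\mf{b}$-route. You yourself flag the fatal difficulty in your ``Main obstacle'': the paths of $\mca{L}_{\mf{b}}$ can thread through $S_{\mf{c}}\setminus S_{\mf{b}}$, so they need not respect the barrier skeleton at all. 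This is precisely the complication that the anticompleteness-based argument avoids, since Lemma~\ref{lem:deathstar} applies to arbitrary induced subgraphs. The tree and wall cases have the analogous problem: branching vertices of $H$ need not be hubs of $\mf{c}$, and the claim that ``three long paths meeting at a hub must share a top-level barrier vertex'' is asserted without proof. I would regard both the zigzagged case and the tree/wall cases as genuine gaps rather than omitted routine details, and I would recommend restructuring the argument around the anticompleteness observation.
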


The proof of Theorem~\ref{thm:interruptedoutcome} relies on the next two lemmas:

\begin{lemma}\label{lem:deathstar}
  Let  $\mf{c}$ be an ample interrupted constellation. Then for every two anticomplete induced subgraphs $X_1,X_2$ of $\mf{c}$, there exists $i\in \{1,2\}$ such that each component of $X_i$ intersects $S_{\mf{c}}$ in at most one vertex.
\end{lemma}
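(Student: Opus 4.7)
The plan is to argue by contradiction and reduce the statement to a direct application of the interrupted hypothesis. Suppose for some anticomplete $X_1, X_2$ in $\mf{c}$, both $X_1$ and $X_2$ have a component meeting $S_{\mf{c}}$ in at least two vertices; call these components $C_1$ and $C_2$. Because $X_1$ and $X_2$ are anticomplete, $C_1$ and $C_2$ are vertex-disjoint and no edge of $\mf{c}$ joins them.

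First I would extract a $\mf{c}$-route inside each $C_i$. Fix any two vertices of $S_{\mf{c}} \cap C_i$ and consider a shortest path between them inside $C_i$; since $C_i$ is an induced subgraph of $\mf{c}$, this shortest path is induced in $\mf{c}$, and because $S_{\mf{c}}$ is stable, any two consecutive occurrences of $S_{\mf{c}}$-vertices along the path are separated by at least one interior vertex lying in $V(\mca{L}_{\mf{c}})$. Taking two such consecutive $S_{\mf{c}}$-vertices as endpoints yields a $\mf{c}$-route $R_i$ entirely contained in $C_i$, with ends $a_i, b_i \in S_{\mf{c}}$; by relabeling we may assume $a_i \prec b_i$ in the linear order $\pre$ witnessing that $\mf{c}$ is interrupted. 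Moreover $\{a_1, b_1\} \cap \{a_2, b_2\} = \varnothing$ because $C_1 \cap C_2 = \varnothing$.

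Next I would do a small case analysis on the relative $\pre$-order of $a_1, b_1, a_2, b_2$. In every interleaving pattern one of $b_1, b_2$ (say $b_j$) comes strictly after both endpoints of the route $R_{3-j}$ in the other component. Explicitly: if $a_1 \prec b_1 \prec a_2 \prec b_2$ or $a_1 \prec a_2 \prec b_1 \prec b_2$ or $a_2 \prec a_1 \prec b_1 \prec b_2$ or $a_2 \prec a_1 \prec b_2 \prec b_1$, then $b_2$ (or the analogous largest endpoint) follows both ends of the relevant route; and if $a_1 \prec a_2 \prec b_2 \prec b_1$ or its swap, then $b_1$ follows both ends of $R_2$. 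In every case we find $x \prec y \prec z$ in $S_{\mf{c}}$ together with a $\mf{c}$-route from $x$ to $y$ inside one component such that $z$ lies in the other component. By the interrupted condition, $z$ has a neighbor in that route, contradicting the fact that $X_1$ and $X_2$ are anticomplete.

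The argument is elementary; the only step that requires a little care is the route-extraction from $C_i$, where one must use the inducedness of $C_i$ and the stability of $S_{\mf{c}}$ to avoid a route that sneaks out of $C_i$. The case enumeration is the mechanical part and the main obstacle, really just bookkeeping to ensure that in every ordering one genuinely produces a contradiction with anticompleteness (which is automatic since in each case the ``witness'' vertex $z$ lies in the component disjoint from the route).
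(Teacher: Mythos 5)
Your proof is correct and follows essentially the same strategy as the paper: extract a $\mf{c}$-route inside one component and apply the interrupted condition with an $S_{\mf{c}}$-vertex of the other component that is $\prec$-larger than both ends of the route. The paper avoids your case analysis by first fixing the $\prec$-maximum vertex $x_j$ of $S_{\mf{c}}\cap(X_1\cup X_2)$ and then taking the route in the \emph{other} component (your six cases collapse to the observation that the larger of $b_1,b_2$ always works), but this is only a difference in bookkeeping.
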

\begin{proof}
Suppose for a contradiction that there are two anticomplete subsets $X_1,X_2$ of $V(\mf{c})$ such that for every $i\in \{1,2\}$, there is a component $K_i$ of $G[X_i]$ with $|K_i\cap S_{\mf{c}}|\geq 2$; in particular, $X_i\cap S_{\mf{c}}\neq \varnothing$. Let $x_1\prec\cdots\prec x_{s}$ be the linear order on the vertices in $S_{\mf{c}}$ with respect to which $\mf{c}$ is interrupted. Let $j\in \poi_{s}$ be maximum such that $x_j\in X_1\cup X_2$. Without loss of generality, we may assume that $x_j\in X_1$ (and so $x_j\notin X_2$). Let $R$ be a shortest path in $K_2$ whose ends $x_{i},x_{i'}$ belong to $K_2\cap S_{\mf{c}}$ (note that $R$ exists because $K_2$ is connected and $|K_2\cap S_{\mf{c}}|\geq 2$). By the choice of $R$, we have $R^*\subseteq K_2\setminus S_{\mf{c}}\subseteq L$ and by the choice of $j$, we have $i,i'<j$. But now  $x_j\in X_1$ has a neighbor in $R^*\subseteq X_2$ (because $\mf{c}$ is interrupted), a contradiction to the assumption that $X_1$ and $X_2$ are anticomplete in $\mf{c}$.
\end{proof}

\begin{lemma}\label{lem:eez_zigzag2}
Let $c,q\in \poi$ and let  $\mf{c}$ be an ample $\left(2c+3q, 2c{c+q-1\choose c}\right)$-constellation that is $q$-zigzagged. Then there are anticomplete subsets $X,Y$ of $V(\mf{c})$ with $\tw(X),\tw(Y)\geq c$.
\end{lemma}
\begin{proof}
    Since $|S_{\mf{c}}|=2c+3q$, we may choose $x_1,x_2\in S_{\mf{c}}$ and pairwise disjoint subsets $Q,S_1,S_2\subseteq S_{\mf{c}}$ such that $|Q|=q$, $|S_1|=|S_2|=c+q-1$ and
    $x_1\prec S_1\prec Q\prec S_2\prec x_2$.

For each $i\in \{1,2\}$ and every $L\in \mca{L}_{\mf{c}}$, let $R_{i,L}$ be a $\mf{c}$-route from $x_i$ to a vertex in $Q$ with $|R_{i,L}|$ as small as possible.  We claim that:

\sta{\label{st:eachpathanti} For every $L\in \mca{L}_{\mf{c}}$, the sets $S_1$ and $R^*_{2,L}$ are anticomplete in $\mf{c}$, and the sets $S_2$ and $R^*_{1,L}$ are anticomplete in $\mf{c}$.}

Suppose not. Then, by symmetry, we may assume that some for some $L\in \mca{L}_{\mf{c}}$, there is a vertex $u\in S_2$ with a neighbor in $R^*_{1,L}$. Let $y\in Q$ be the end of $R_{1,L}$ other than $x$. Since $\mf{c}$ is ample, it follows that there is a $\mf{c}$-route $R'$ from $x_1$ to $u$ with $R'^*\subseteq R_{1,L}^*\setminus N_R(y)$. Since $\mf{c}$ is $q$-zigzagged, and since $x_1\prec Q\prec u$, it follows that some vertex $z\in Q$ has a neighbor in $R'^*$. Consequently, there is a $\mf{c}$-route $R''$ from $x$ to $z\in Q_1$ with $R''^*\subseteq R'^*\subseteq R_{1,L}^*\setminus N_R(y)$, and so $|R''|<|R_{1,L}|$. This violates the choice of $R_{1,L}$, hence proving \eqref{st:eachpathanti}.
\medskip

Let $\mca{L}_1,\mca{L}_2\subseteq \mca{L}_{\mf{c}}$ be disjoint with $|\mca{L}_1|=|\mca{L}_2|=c{c+q-1\choose q}$.  Since $\mf{c}$ is $q$-zigzagged and since $|S_1|=|S_2|=c+q-1$, it follows that for every $i\in \{1,2\}$ and every $L\in \mca{L}_{i}$, there is a $c$-subset $S'_{i,L}$ of $S_i$ such that every vertex in $S'_{i,L}$ has a neighbor in $R^*_{i,L}$. Since $|\mca{L}_1|=|\mca{L}_2|=c{c+q-1\choose q}$, it follows that for every $i\in \{1,2\}$, there is a $c$-subset $S'_i$ of $S_i$ and a $c$-subset $\mca{L}'_i$ of $\mca{L}_i$ such that for every $L\in \mca{L}'_i$, we have $S'_{i,L}=S'_i$.

Now, let
$$X=(S'_1,\{R^*_{1,L}:L\in \mca{L}'_1\})$$
and let
$$Y=(S'_2,\{R^*_{2,L}:L\in \mca{L}'_2\}).$$

By \eqref{st:eachpathanti} and since $\mf{c}$ is a constellation, it follows that $X$ and $Y$ are anticomplete $(c,c)$-constellations in $\mf{c}$. In particular, $X$ and $Y$ are anticomplete induced subgraphs of $\mf{c}$ each with an induced minor isomorphic to $K_{c,c}$. Hence, we have $\tw(X),\tw(Y)\geq c$. This completes the proof of Lemma~\ref{lem:eez_zigzag2}.
\end{proof}

\begin{proof}[Proof of Theorem~\ref{thm:interruptedoutcome}]
By Lemma~\ref{lem:deathstar}, $\mf{c}$ has no two anticomplete induced subgraphs, each of pathwidth more than $2$. On the other hand, by Lemma~\ref{lem:eez_zigzag2}, for every $q\in \poi$, every ample $q$-zigzagged $\left(3q+6,6\binom{q+2}{3}\right)$-constellation has two anticomplete induced subgraphs, each of treewidth (and so pathwidth) at least $3$. Therefore, $\mf{c}$ has no induced subgraph isomorphic to an ample $q$-zigzagged $\left(3q+6,6\binom{q+2}{3}\right)$-constellation, where $q\in \poi$.

Moreover, observe that (the line graph of) every subdivision of $T_{7}$ has two anticomplete induced subgraphs each isomorphic to (the line graph of) a subdivision of $T_{6}$, and so each of pathwidth at least $3$. Thus, $\mf{c}$ has no induced subgraph isomorphic to a subdivision of $T_{7}$ or the line graph of a subdivision of $T_{7}$. Finally, observe that (the line graph of) every subdivision of $W_{6\times 6}$ has two anticomplete induced subgraphs each isomorphic to (the line graph of) a subdivision of $W_{3\times 3}$, and so each of treewidth (and so pathwidth) at least $3$. Hence, $\mf{c}$ has no induced subgraph isomorphic to a subdivision of $W_{6\times 6}$  or the line graph of a subdivision of $W_{6\times 6}$. This completes the proof of Theorem~\ref{thm:interruptedoutcome}.
\end{proof}

\subsection{The zigzagged case} Again, since constellations are $K_4$-free and ample constellations are $K_{3,3}$-free, the following suffices to show that zigzagged constellations cannot be omitted from the outcomes of Theorems~\ref{thm:mainconstellation} and \ref{thm:motherKtt}:

\begin{theorem}\label{thm:zigzaggedoutcome}
Let $q\in \poi$ and let $\mf{c}$ be an ample $q$-zigzagged constellation. Then $\mf{c}$ has no induced subgraph isomorphic to any of the following.
    \begin{itemize}
        \item An ample interrupted $(2q+6)$-constellation.
        \item A subdivision of $T_{64q^2}$ or the line graph of a subdivision of $T_{64q^2}$.
        \item A subdivision of $W_{2^{64q^2}\times 2^{64q^2}}$ or the line graph of a subdivision of $W_{2^{64q^2}\times 2^{64q^2}}$.
    \end{itemize}
\end{theorem}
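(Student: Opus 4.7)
The plan is to follow the strategy of the proof of Theorem~\ref{thm:interruptedoutcome}, with the roles of interrupted and zigzagged constellations exchanged. The first step is to establish a structural lemma dual to Lemma~\ref{lem:deathstar}: in any ample $q$-zigzagged constellation $\mf{c}$ with linear order $\pre$ witnessing $q$-zigzaggedness, for any two anticomplete induced subgraphs $X_1, X_2$ of $\mf{c}$, any $i \in \{1,2\}$, and any $\mf{c}$-route $R \subseteq X_i$ with ends $a \prec b$, the set $X_{3-i} \cap S_{\mf{c}}$ contains fewer than $q$ vertices $z$ with $a \prec z \prec b$. The proof parallels that of Lemma~\ref{lem:deathstar}: since $\mf{c}$ is $q$-zigzagged, all but fewer than $q$ of the $S_{\mf{c}}$-vertices strictly between $a$ and $b$ in $\pre$ have a neighbor on $R \subseteq X_i$, and any such vertex belonging to $X_{3-i}$ would contradict anticompleteness.

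Given this lemma, the second step is to verify that each of the three forbidden induced subgraphs contains two anticomplete induced subgraphs that jointly violate it. For (the line graph of) a subdivision of $T_{64q^2}$, splitting at the root yields two anticomplete induced subdivisions of (the line graph of) $T_{64q^2-1}$; each contains an ample sub-constellation whose $S$-set is far larger than $q$, so inside one piece we can find a $\mf{c}$-route $R$ between two $S_{\mf{c}}$-vertices $a \prec b$ whose interval $(a,b)$ in $\pre$ traps at least $q$ $S_{\mf{c}}$-vertices of the other piece, violating the structural lemma. An analogous argument handles (the line graph of) a subdivision of $W_{2^{64q^2} \times 2^{64q^2}}$, with the exponential bound accounting for the weaker splitting ratio of walls versus trees. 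For an ample interrupted $(s,l)$-constellation $\mf{d} \subseteq \mf{c}$ with $s \geq 2q+6$, one exploits the interrupted property together with distinct paths in $\mca{L}_{\mf{d}}$, along with a partition of $S_{\mf{d}}$ into early and late vertices under $\pre_{\mf{d}}$, to produce an anticomplete pair in $\mf{d}$; the threshold $2q+6$ is calibrated so that after tracking which vertices of $\mf{d}$ lie in $S_{\mf{c}}$ versus $V(\mca{L}_{\mf{c}})$, at least one side yields a $\mf{c}$-route trapping $q$ or more $S_{\mf{c}}$-vertices of the other side.

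The main obstacle is the interrupted case. The interrupted property requires every late vertex to see every $\mf{d}$-route among earlier vertices, so anticomplete pairs in $\mf{d}$ are scarce and must be constructed with care by exploiting distinct path choices in $\mca{L}_{\mf{d}}$. Extracting interleaved $S_{\mf{c}}$-vertices inside such a pair requires tracking how the vertices of $V(\mf{d})$ distribute between $S_{\mf{c}}$ and $V(\mca{L}_{\mf{c}})$, with degree-bound arguments (each vertex of $V(\mca{L}_{\mf{c}})$ has at most two neighbors in $V(\mca{L}_{\mf{c}})$) playing a key role. By comparison, the tree and wall cases reduce cleanly to splitting arguments once the structural lemma is established.
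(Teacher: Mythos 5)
Your dual structural lemma is true as stated (if $R\subseteq X_i$ is a $\mf{c}$-route with ends $a\prec b$, all but fewer than $q$ of the $S_{\mf{c}}$-vertices strictly between $a$ and $b$ have a neighbor on $R$, and none of these can lie in $X_{3-i}$), but it is far too weak to carry the reduction you build on it, and this is where the proof breaks. Unlike the interrupted case, an ample $q$-zigzagged constellation genuinely \emph{does} contain two anticomplete induced subgraphs each of large treewidth --- this is exactly what Lemma~\ref{lem:eez_zigzag2} of the paper proves. So splitting the subdivided tree (or wall) at the root into two anticomplete complex pieces cannot by itself yield a contradiction. Your argument therefore hinges entirely on the unproved claim that some route in one piece, with ends $a\prec b$ in $S_{\mf{c}}$, ``traps'' at least $q$ vertices of $S_{\mf{c}}$ belonging to the other piece. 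But nothing forces the two pieces' $S_{\mf{c}}$-vertices to interleave in $\pre$: they could occupy disjoint intervals of the order (as they do, for instance, when $H$ is a long path split into two halves, which is perfectly realizable inside a zigzagged constellation). Establishing that a large binary tree forces this interleaving is the actual content of the theorem, and the paper obtains it by a different mechanism: the route graph $\mf{c}_H$ on $V(H)\cap S_{\mf{c}}$ has all its edges between vertices at distance at most $q$ in $\pre$ (statement \eqref{st:transition}), hence pathwidth at most $q$, while the star-contraction $J$ of $H$ retains a $T_{16q^2}$ minor and hence has pathwidth at least $8q^2$; Lemma~\ref{lem:pwsub} (via Theorem~\ref{thm:pwsub}) shows these two facts are incompatible. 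Also note that the branch vertices of the subdivided tree have degree $3$ and so need not lie in $S_{\mf{c}}$; the contraction machinery of Observation~\ref{obs:zigzag} is what lets the paper pass from $H$ to a graph controlled by $V(H)\cap S_{\mf{c}}$.

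The interrupted case is in even worse shape under your plan. By Lemma~\ref{lem:deathstar}, an ample interrupted constellation admits \emph{no} pair of anticomplete induced subgraphs in which both pieces have a component meeting the stable side twice, so the anticomplete pair you propose to ``produce with care'' essentially cannot have the richness your structural lemma would need on both sides. The paper avoids anticomplete pairs altogether here: an interrupted $(2q+6)$-constellation contains a proper subdivision of the star $K_{1,2q+1}$ whose center and leaves have degree at least $4$ (hence lie in $S_{\mf{c}}$, by ampleness), and Lemma~\ref{lem:zigzagged} shows such a spider cannot occur in an ample $q$-zigzagged constellation, because $q+1$ of its leaves lie on the same side of the center in $\pre$ and the zigzag condition then forces one leg to meet another. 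You would need to supply arguments of this kind --- a bandwidth/pathwidth argument for the tree and wall cases, and a spider argument for the interrupted case --- to close the gaps.
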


The proof is in several steps. First, we show that:

\begin{lemma}\label{lem:zigzagged}
  Let $q\in \poi$ and let $\mf{c}$ be an ample $q$-zigzagged constellation. Then there is no induced subgraph $H$ of $\mf{c}$ isomorphic to a proper subdivision of $K_{1,2q+1}$ where the center and the leaves of $H$ have degree at least $4$ in $\mf{c}$.
\end{lemma}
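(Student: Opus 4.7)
The plan is to derive a contradiction from the existence of such an $H$ by applying the zigzag condition to the initial $\mf{c}$-routes of the legs of $H$. First, I would use the degree hypotheses together with amplitude to place $v$ and all leaves $u_1, \ldots, u_{2q+1}$ inside $S_{\mf{c}}$: since $\mf{c}$ is ample, every vertex in $V(\mca{L}_{\mf{c}})$ has at most one neighbor in $S_{\mf{c}}$, and hence degree at most $3$ in $\mf{c}$, so every vertex of degree at least $4$ must lie in $S_{\mf{c}}$. Writing $P_1, \ldots, P_{2q+1}$ for the internally disjoint $v$-to-$u_i$ paths of $H$, I would next extract the initial $\mf{c}$-route $R_i$ of each $P_i$: starting at $v$, walk along $P_i$ until encountering the first vertex $a_i \in S_{\mf{c}}$ (which is either $u_i$ or an internal $S_{\mf{c}}$-vertex of $P_i$). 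Because $S_{\mf{c}}$ is stable, the first internal vertex of $P_i$ lies in $V(\mca{L}_{\mf{c}})$, so $R_i$ is indeed a $\mf{c}$-route. The $R_i$'s inherit pairwise disjoint interiors from the $P_i$'s, and since the legs of $H$ only share $v$ and the leaves are distinct, the endpoints $a_1, \ldots, a_{2q+1}$ are pairwise distinct and distinct from $v$.

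The crucial observation needed to set up the zigzag argument is that for $j \neq i$, the vertex $a_j$ is anticomplete to $R_i$ in $\mf{c}$. Indeed, $V(R_i) \subseteq V(P_i)$, and $a_j \in V(P_j)$ with $V(P_i) \cap V(P_j) = \{v\}$, so $a_j \notin V(R_i)$. Moreover, because $H$ is induced in $\mf{c}$, any neighbor of $a_j$ inside $V(R_i)$ would give an $H$-edge from $V(P_j)$ to $V(P_i)$, landing necessarily in $V(P_i) \cap V(P_j) = \{v\}$; but $a_j$ and $v$ are both in the stable set $S_{\mf{c}}$, so they are non-adjacent. Hence $\{a_j : j \neq i\}$ is an anticomplete set of $2q$ vertices of $S_{\mf{c}}$ relative to $R_i$.

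To finish, I would apply the zigzag condition to $R_1$ and $R_{2q+1}$ via pigeonhole on the zigzag order. Let $\pre$ be the witnessing linear order on $S_{\mf{c}}$, reindex so that $a_1 \prec \cdots \prec a_{2q+1}$, and let $k \in \{0, \ldots, 2q+1\}$ be the number of $a_j$'s with $a_j \prec v$. Applied to $R_{2q+1}$, the $2q - k$ vertices $a_{k+1}, \ldots, a_{2q}$ lying strictly between $v$ and $a_{2q+1}$ in $\pre$ are all anticomplete to $R_{2q+1}$, forcing $2q - k < q$, hence $k \geq q+1$ (the edge case $v \succ a_{2q+1}$ gives $k = 2q+1$ directly, which also satisfies $k \geq q+1$). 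Symmetrically, applied to $R_1$, since $k \geq 2$ we have $a_1 \prec v$, and the $k - 1$ vertices $a_2, \ldots, a_k$ strictly between $a_1$ and $v$ are anticomplete to $R_1$, forcing $k - 1 < q$, hence $k \leq q$. These two inequalities contradict each other, completing the proof. The only point requiring care is the bookkeeping of where $v$ sits in the linear order relative to the $a_i$'s, but this is easily dispensed with as above.
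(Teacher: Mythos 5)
Your proof is correct and follows essentially the same approach as the paper's: use ampleness and the degree-four condition to place the center and all leaves in $S_{\mf{c}}$, pass to $\mf{c}$-routes emanating from the center, and apply the zigzag condition via a pigeonhole argument on the linear order. The one stylistic difference is that you form the $\mf{c}$-routes by truncating each leg at its first $S_{\mf{c}}$-vertex, whereas the paper instead takes the legs minimal so that their interiors avoid $S_{\mf{c}}$; your handling is slightly cleaner since it sidesteps having to justify that such an internal $S_{\mf{c}}$-vertex could serve as a new endpoint with degree at least $4$, and your two-sided pigeonhole on $k$ is a harmless variant of the paper's one-sided chain.
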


\begin{proof}
Suppose not. Then there are paths $P_1,\ldots, P_{2q+1}$ of non-zero length in $\mf{c}$, all sharing an end $x$, such that $(P_i\setminus \{x\}:i\in \poi_{2q+1})$ are pairwise anticomplete in $\mf{c}$, and for each $i\in \poi_{2q+1}$, the ends of $P_i$ have degree at least $4$ in $\mf{c}$. Choose $P_1,\ldots, P_{2q+1}$ with $P_1\cup \cdots\cup P_{2q+1}$ minimal subject to the above property. For every $i\in \poi_{2q+1}$, let $x_i$ be the end of $P_i$ different than $x$; thus, both $x$ and $x_i$ have degree at least $4$ in $\mf{c}$.  Since $\mf{c}$ is ample, it follows that all vertices of degree at least $4$ in $\mf{c}$ belong to $S_{\mf{c}}$. In particular, we have $x,x_1,\ldots, x_{2q+1}\in S_{\mf{c}}$. Moreover, from the minimality of $P_1\cup \cdots\cup P_{2q+2}$, it follows that for every $i\in \poi_{2q+2}$, we have $P_i^*\cap S_{\mf{c}}=\varnothing$, and so $P_i$ is $\mf{c}$-route from $x$ to $x_i$. Let $\prec$ be the linear order on $S_{\mf{c}}$ with respect to which $\mf{c}$ is $q$-zigzagged. Since $x,x_1,\ldots, x_{2q+1}\in S_{\mf{c}}$, it follows that there are $i_1,\ldots, i_{q+1}\in \poi_{2q+1}$ such that either $x\prec x_{i_1}\prec \cdots \prec x_{i_{q+1}}$ or $x\succ x_{i_1}\succ \cdots \succ x_{i_{q+1}}$. But now since $\mf{c}$ is $q$-zigzagged, it follows that $x_{i_j}$ has a neighbor in $P_{i_{q+1}}$ for some $j\in \poi_{q}$, a contradiction to the assumption that $(P_i\setminus \{x\}:i\in \poi_{2q+1})$ are pairwise anticomplete in $\mf{c}$.
\end{proof}

We need the following result from \cite{approxpw}:

\begin{theorem}[Groenland, Joret, Nadra, Walczak  \cite{approxpw}]\label{thm:pwsub}
   Let $r,t\in \poi$ and let $G$ be a graph of treewidth at most $t-1$. Then either $G$ has pathwidth at most $rt+1$ or $G$ has a subgraph isomorphic to a subdivision of $T_{r+1}$.
\end{theorem}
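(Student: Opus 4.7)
\medskip

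The plan is to prove Theorem~\ref{thm:pwsub} by induction on $r$, working with the following slightly strengthened statement: for every graph $G$ with a tree decomposition $(T,\beta)$ of width at most $t-1$ and every choice of a root of $T$, either $G$ has pathwidth at most $rt+1$, \emph{and} moreover this is witnessed by a path decomposition whose first bag contains $\beta(\mathrm{root})$, or $G$ contains a subgraph isomorphic to a subdivision of $T_{r+1}$, \emph{and} moreover the root of the subdivision can be chosen inside $\beta(\mathrm{root})$. Carrying this extra rooting data through the induction is what allows the subdivisions produced in two different subtrees to be glued into a subdivision of $T_{r+1}$ at the next level up.

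For the base case $r=0$ it suffices to check that if $G$ has pathwidth at least $2$ then some component of $G$ contains a vertex of degree at least $3$ or a cycle, which gives a subdivision of $T_1\cong P_3$ rooted at any branching vertex in $\beta(\mathrm{root})$; otherwise $G$ is a disjoint union of caterpillars and a path decomposition of width at most $1$ starting at any prescribed bag is routine. For the inductive step, fix a rooted tree decomposition $(T,\beta)$ of width at most $t-1$ and, for each node $v\in T$, let $T_v$ be the subtree of $T$ rooted at $v$ and let $G_v=G\bigl[\bigcup_{u\in V(T_v)}\beta(u)\bigr]$. Call $v$ \emph{heavy} if $G_v$ has pathwidth more than $(r-1)t+1$, and \emph{light} otherwise; by the inductive hypothesis, every heavy node $v$ admits a subdivision of $T_r$ in $G_v$ rooted at a vertex of $\beta(v)$.

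The key dichotomy is whether some node of $T$ has two heavy children. If no node does, I build a path decomposition of width at most $rt+1$ by a greedy traversal: at each node $v$ we carry along the at most $t$ vertices of $\beta(v)$ together with a width-$((r-1)t+1)$ path decomposition, supplied by induction, of the union of the light children's subgraphs (whose decompositions can be concatenated end-to-end and prepended to the unique heavy child's decomposition); the total width is at most $t+(r-1)t+1=rt+1$ as required. If, instead, some node $v$ has two heavy children $c_1,c_2$, I apply induction to $G_{c_1}$ and $G_{c_2}$ separately to obtain two vertex-disjoint subdivisions of $T_r$, with roots $x_1\in\beta(c_1)$ and $x_2\in\beta(c_2)$. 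Using the connectivity of the intersections of bags along edges of $T$, I route two internally disjoint paths from $x_1$ and $x_2$ to a fresh vertex $y\in\beta(v)$ through the vertices of $\beta(v)\cup(\beta(v)\cap\beta(c_1))\cup(\beta(v)\cap\beta(c_2))$; the resulting tree rooted at $y$ is a subdivision of $T_{r+1}$ rooted in $\beta(v)$.

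The main obstacle will be the routing step: one must ensure that the two linking paths from $x_1,x_2$ to $y$ lie entirely outside the already-chosen subdivisions of $T_r$. Handling this cleanly requires some flexibility in where the roots $x_1,x_2$ are chosen within $\beta(c_1)$ and $\beta(c_2)$, and in how the shared boundary vertices in $\beta(v)\cap\beta(c_i)$ are used, which is precisely the reason the strengthened inductive hypothesis is formulated to produce \emph{any} root in the relevant bag rather than a specific one. The remaining bookkeeping, namely verifying that the greedy path decomposition in the ``no branching'' case genuinely starts at the root bag and has width at most $rt+1$, amounts to a straightforward induction once the structural dichotomy is established.
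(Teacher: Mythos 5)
This theorem is quoted from Groenland, Joret, Nadara and Walczak \cite{approxpw}; the paper gives no proof of its own, so your proposal has to stand on its own merits, and it does not. The fatal gap is the branching case. If a node $v$ has two heavy children $c_1,c_2$, induction gives you a subdivision of $T_r$ in each of $G_{c_1}$ and $G_{c_2}$, but (i) these need not be vertex-disjoint, since $V(G_{c_1})\cap V(G_{c_2})$ can be a nonempty subset of $\beta(v)$, and (ii) there is no reason for the two linking paths from $x_1,x_2$ to a common fresh root $y$ to exist at all: bags of a tree decomposition do not induce connected subgraphs, and $G$ may even be disconnected, in which case $G_{c_1}$ and $G_{c_2}$ can lie in different components and no subdivision of $T_{r+1}$ exists. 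The phrase ``using the connectivity of the intersections of bags along edges of $T$'' does not correspond to any property of tree decompositions. This shows the dichotomy itself is wrong: ``some node has two heavy children'' is consistent with $G$ having pathwidth at most $rt+1$ and containing no subdivision of $T_{r+1}$ (take $G$ to be the disjoint union of two graphs of pathwidth $(r-1)t+2$). So in the branching case your argument must sometimes fall back to proving a pathwidth bound, and it has no mechanism for doing so; arranging the connectivity needed for gluing, and bounding the pathwidth when gluing is impossible, is the entire content of the theorem.

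The strengthened inductive hypothesis is also false as stated. Take $G$ to be a complete binary tree of enormous height with a long pendant path attached to its root, and a tree decomposition whose root bag consists of the far endpoint of the pendant path. Then $\pw(G)$ exceeds $rt+1$, so the first alternative fails, yet no subdivision of $T_{r+1}$ can have its (degree-two, branching) root on the pendant path, so the second alternative fails too. Relatedly, your base case $r=0$ of the strengthened claim fails for $t\geq 3$ (three isolated vertices all in the root bag admit no width-$1$ path decomposition whose first bag contains the root bag, and contain no $P_3$). The non-branching half of your argument --- concatenating the light children's decompositions along the heavy path with $\beta(v_i)$ added to every bag --- is sound and gives width $t+(r-1)t+1=rt+1$, and for that part the first-bag condition is not even needed. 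But the theorem cannot be recovered from this sketch without a genuinely different strengthening (roughly, one that controls how the subdivision attaches to a prescribed vertex of a \emph{connected} graph), which is what the cited argument of \cite{approxpw} supplies.
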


Given a graph $H$, by a \textit{leaf-extension} of $H$ we mean a graph $G$ obtained from $H$ by adding a set $S$ of pairwise non-adjacent vertices, each with exactly one neighbor in $V(H)$.

\begin{lemma}\label{lem:pwsub}
    Let $w\in \poi$, let $H$ be a graph of pathwidth at most $w$ and let $G$ be a subdivision of a leaf-extension of $H$. Then $\pw(G)\leq 2w^2+3w+2$.
\end{lemma}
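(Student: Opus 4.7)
The plan is to build a path decomposition of $G$ of width at most $2w^2+3w+2$ in two stages: first extend a path decomposition of $H$ to one of $H'$ (accommodating the pendants), then extend it to a path decomposition of $G$ (accommodating the subdivision).

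For the first stage, I will show $\pw(H')\le w+1$. Fix a path decomposition $(B_1,\dots,B_m)$ of $H$ of width at most $w$. For each pendant vertex $s\in V(H')\setminus V(H)$ with unique neighbor $u_s\in V(H)$, pick an index $i_s$ with $u_s\in B_{i_s}$ and insert a new bag $B_{i_s}\cup\{s\}$ immediately after $B_{i_s}$. Each such bag has size at most $w+2$, and the contiguity of every vertex's occurrences is preserved: $s$ appears in exactly one bag, $u_s$'s interval extends by a single position, and every other vertex of $B_{i_s}$ remains in a contiguous subrange.

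For the second stage, let $(C_1,\dots,C_n)$ be the path decomposition of $H'$ from Stage 1, of width $p\le w+1$. For each edge $e=xy\in E(H')$, fix an index $i(e)$ with $\{x,y\}\subseteq C_{i(e)}$, and let $E_i=\{e\in E(H'): i(e)=i\}$. Between $C_i$ and $C_{i+1}$ I insert, for each $e=xy\in E_i$ with subdivision path $P_e=x, q^e_1,\dots,q^e_{k_e}, y$ in $G$, the gadget sequence
\[
C_i\cup\{q^e_1\},\ C_i\cup\{q^e_1,q^e_2\},\ C_i\cup\{q^e_2\},\ C_i\cup\{q^e_2,q^e_3\},\ \dots,\ C_i\cup\{q^e_{k_e}\},
\]
and then return to $C_i$ before processing the next edge of $E_i$. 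Every inserted bag has size at most $|C_i|+2\le w+4$, so the resulting path decomposition of $G$ has width at most $w+3$, which is well within the stated bound $2w^2+3w+2$ for all $w\ge 1$.

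The main technical point I expect to focus on is verifying, in Stage 2, that concatenating gadgets for many different edges between the same pair $C_i,C_{i+1}$ does not break the contiguity condition for any vertex of $G$. This reduces to two observations: each subdivision vertex $q^e_j$ lives only within its edge's own gadget (so its interval is a short contiguous subsequence of at most three bags), and every gadget bag contains $C_i$ in full, so any original vertex of $C_i$ continues to appear in an unbroken interval spanning the whole gadget region. Combined with the straightforward check that each edge of each $P_e$ (including the end-edges $xq^e_1$ and $q^e_{k_e}y$, which are covered since $x,y\in C_i$) lies in some bag, this yields a valid path decomposition of $G$ witnessing the desired width bound.
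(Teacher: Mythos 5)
Your construction is correct, but it is genuinely different from (and quantitatively stronger than) the argument in the paper. The paper proceeds indirectly: it notes that $\tw(G)\leq w$ and that $G$ cannot contain a subdivision of $T_{2w+2}$ as a subgraph (since $\pw(H)\leq w$ forbids subdivided $T_{2w}$'s in $H$), and then invokes the Groenland--Joret--Nadara--Walczak theorem (Theorem~\ref{thm:pwsub}) to conclude $\pw(G)\leq (2w+1)(w+1)+1=2w^2+3w+2$. You instead build a path decomposition explicitly: pendant vertices cost one extra unit of width, and each subdivision path is swept through by gadget bags that all contain the host bag $C_{i(e)}$, costing two more. I checked the three axioms for your decomposition --- coverage of vertices, coverage of edges (including the end-edges $xq^e_1$ and $q^e_{k_e}y$, which need $x,y\in C_{i(e)}$, as you arranged), and contiguity (subdivision vertices occupy at most three consecutive gadget bags; an original vertex $v$ with interval $[a,b]$ in $(C_1,\dots,C_n)$ occupies exactly $C_a$ through the last gadget bag following $C_b$, since every gadget bag inserted after $C_i$ contains $C_i$ in full) --- and all hold, so you obtain $\pw(G)\leq w+3$, a linear bound in place of the paper's quadratic one. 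What the paper's route buys is brevity given that Theorem~\ref{thm:pwsub} is already on hand; what yours buys is self-containedness and the sharper bound, which would propagate to a better constant in Theorem~\ref{thm:zigzaggedoutcome}. The only points worth stating explicitly if you write this up are the two boundary cases: several pendants or several edges may be assigned to the same index $i$, in which case the inserted bags are simply concatenated (harmless, since each is a superset of the host bag), and edges of $H'$ that are not subdivided contribute an empty gadget and are already covered by $C_{i(e)}$.
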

\begin{proof}
    Since $\tw(H)\leq \pw(H)\leq w$ and since $G$ is a subdivision of a leaf-extension of $H$, it follows that $\tw(G)\leq w$. Moreover, since $\pw(H)\leq w$, it follows that $H$ has no subgraph isomorphic to a subdivision of $T_{2w}$, and so $G$ has no subgraph isomorphic to a subdivision of $T_{2w+2}$. But now by Theorem~\ref{thm:pwsub} applied to $G$, we have $\pw(G)\leq (w+1)(2w+1)+1=2w^2+3w+2$.
\end{proof}

We continue with more definitions. Let $G$ be a graph. For $X\subseteq V(G)$, the \textit{$X$-contraction of $G$} is the graph obtained from $G$ by contracting all edges in $G$ with at least one end in $X$ (and removing all loops and the parallel edges arising in this process). It follows that the $X$-contraction of $G$ is an induced minor of $G$. By a \textit{star-contraction of $G$}, we mean the $X$-contraction of $G$ for some $X\subseteq V(G)$ such that the vertices in $X$ are pairwise at distance at least $3$ in $G$ (in particular, $X$ is a stable set in $G$).

Let $\mf{c}$ be a constellation and let $H$ be an induced subgraph of $\mf{c}$. We define $\mf{c}_H$ to be the graph with vertex set $V(H)\cap S_{\mf{c}}$ such that for all distinct $x,x'\in V(H)\cap S_{\mf{c}}$, we have $xx'\in E(\mf{c}_H)$ if and only if there is a $\mf{c}$-route $R$ from $x$ to $x'$ where $R\subseteq V(H)$ and $(V(H)\cap S_{\mf{c}})\setminus \{x,x'\}$ is anticomplete to $R$ in $H$.

We will use the following observation in the proof of Theorem~\ref{thm:zigzaggedoutcome}. Note that \ref{obs:zigzag}\ref{obs:zigzag_a} is immediate from the assumption that $\mf{c}$ is ample, and  \ref{obs:zigzag}\ref{obs:zigzag_b} follows easily from the assumption that $H$ has no cycle on four or more vertices; we omit the details.

\begin{observation}\label{obs:zigzag}
Let  $\mf{c}$ be an ample constellation and let $H$ be an induced subgraph of  $\mf{c}$ with no cycle on four or more vertices. Let $J$ be the $(V(H)\cap S_{\mf{c}})$-contraction of $H$. Then the following hold.
\begin{enumerate}[{\rm (a)}]
    \item\label{obs:zigzag_a} $J$ is a star contraction of $H$.
    
      \item\label{obs:zigzag_b} $J$ is isomorphic to a subdivision of a leaf-extension of $\mf{c}_H$.
\end{enumerate}
\end{observation}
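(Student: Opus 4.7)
For (a), it suffices to show that any two distinct $x, x' \in X := V(H) \cap S_{\mf{c}}$ are at distance at least $3$ in $H$. Since $H$ is an induced subgraph of $\mf{c}$, distance in $H$ is at least distance in $\mf{c}$. Stability of $S_{\mf{c}}$ forbids $xx' \in E(\mf{c})$ and any common neighbor in $S_{\mf{c}}$, while ampleness of $\mf{c}$ forbids a common neighbor in $V(\mca{L}_{\mf{c}})$. Hence $x, x'$ are at distance at least $3$ in $\mf{c}$ and so in $H$, and (a) follows from the definition of a star-contraction.

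For (b), the plan is to read $J$ off the run-structure of $H \setminus X$. By ampleness, each vertex of $V(\mca{L}_{\mf{c}})$ has at most one neighbor in $S_{\mf{c}}$; hence the sets $(N_H(x) : x \in X)$ are pairwise disjoint. Set $Z := V(H) \setminus (X \cup N_H(X))$, so the vertex set of $J$ is $\{v_x : x \in X\} \cup Z$, where $v_x$ is the vertex obtained by contracting $\{x\} \cup N_H(x)$ to a single point. Every component of $H \setminus X$ is a subpath of some $L \in \mca{L}_{\mf{c}}$; traverse such a component $K$ and label each vertex by its (unique) $X$-neighbor in $H$, or by $\emptyset$ if it has none. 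This partitions $K$ into maximal monochromatic runs. Under the $X$-contraction, each $x$-labeled run is absorbed into $v_x$; each $\emptyset$-labeled run survives as a subpath in $J$; and the internal edges of $K$ give the incidences between these, so $v_x$ and $v_{x'}$ with $x \neq x'$ are joined by an edge of $J$ whenever their runs are consecutive in some $K$, and $v_x$ is joined to an end of an $\emptyset$-subpath whenever the corresponding $\emptyset$-run is adjacent in $K$ to an $x$-run.

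The no-4-cycle hypothesis enters in two crucial places. First, no $x \in X$ can have two distinct runs in the same component $K$: any two such runs would be joined within $K$ by a subpath of length at least $2$, which together with the two edges to $x$ would produce a cycle of length at least $4$ in $H$. Second, for any $x, x' \in X$, at most one $\mf{c}$-route in $H$ from $x$ to $x'$ witnesses the edge $xx'$ of $\mf{c}_H$, since two distinct such routes share only their ends and would union to a cycle of length at least $6$ in $H$. These two constraints put the edges of $\mf{c}_H$ in bijection with the consecutive, distinctly-labeled run pairs across all components of $H \setminus X$, and realize each such edge in $J$ as a path obtained from the (possibly length-zero) intervening $\emptyset$-run. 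Meanwhile, the $\emptyset$-runs appearing at an end of some $K$ produce pendant paths in $J$ attached to a single $v_x$, which are precisely the leaves of the leaf-extension (possibly subdivided).

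The main obstacle will be the bookkeeping: one must verify that the advertised correspondence is genuinely a bijection (ruling out spurious additional edges or cycles in $J$ beyond those of the subdivision) and handle the degenerate case of components of $H \setminus X$ consisting entirely of $\emptyset$-labeled vertices, which would produce isolated subpaths in $J$; these can be absorbed into the leaf-extension as extra subdivided pendant edges (attached arbitrarily), or alternatively discarded as irrelevant to the induced-minor structure that the observation is used to extract.
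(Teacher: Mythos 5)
The paper gives no proof of this observation (it declares (a) ``immediate'' from ampleness and (b) something that ``follows easily'' from the absence of long cycles, and omits the details), so there is nothing to compare against line by line; your write-up supplies the intended argument and its core is correct. For (a), stability of $S_{\mf{c}}$ plus $1$-ampleness giving pairwise distance at least $3$ in $\mf{c}$, hence in $H$, is the whole point. For (b), the run decomposition of the components of $H\setminus X$ (each a subpath of some $L\in\mca{L}_{\mf{c}}$, each vertex carrying at most one label by ampleness), together with your two uses of the no-long-cycle hypothesis --- no label occurs in two runs of the same component, and no edge of $\mf{c}_H$ is witnessed twice --- is exactly what identifies $J$ as $\mf{c}_H$ with each edge subdivided (possibly zero times) plus subdivided pendant leaves. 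The remaining ``bookkeeping'' you defer really is routine given those two facts.

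Two points to tighten. First, your uniqueness claim for witnessing routes leans on the assertion that two distinct routes ``share only their ends,'' which need not hold. The clean statement is that $H$ contains at most one induced path between any two vertices: two distinct induced paths with the same ends yield, between a divergence vertex $a$ and the first reconvergence vertex $b$, two internally disjoint distinct $a$--$b$ subpaths whose union is a cycle; if that cycle had only $3$ vertices, one of the two subpaths would be the edge $ab$ and the other would then carry a chord, contradicting inducedness, so the cycle has at least $4$ vertices. Second, the degenerate case you flag is real, but your first proposed fix is wrong: a component of $H\setminus X$ with no labeled vertex survives in $J$ as an isolated path containing no $v_x$, and it cannot be ``attached arbitrarily'' as a subdivided leaf, because every component of a subdivision of a leaf-extension of $\mf{c}_H$ contains a vertex of $\mf{c}_H$. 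As stated, the observation implicitly assumes that every component of $H$ meets $N_H[V(H)\cap S_{\mf{c}}]$ (for instance, $H$ connected with $V(H)\cap S_{\mf{c}}\neq\varnothing$), which is what holds in the one place it is applied, where $H$ is a connected subdivision of a tree or its line graph; your second instinct, that the case is irrelevant there, is the correct resolution.
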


We will also use the next observation (again, we omit the proof as it is easy).

\begin{observation}\label{obs:trees}
Let $r\in \poi$. Then there is an induced subgraph of (the line graph of) $W_{2^r\times 2^r}$ isomorphic to (the line graph of) a proper subdivision of $T_{r}$.
\end{observation}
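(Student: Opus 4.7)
The plan is to first prove the statement for walls, and then deduce the line graph version for free. Indeed, if $H$ is an induced subgraph of $G$, then $L(H)$ is an induced subgraph of $L(G)$, since two edges of $H$ share a vertex in $H$ if and only if they share a vertex in $G$; and if $H$ is isomorphic to a proper subdivision of $T_r$, then $L(H)$ is isomorphic to the line graph of a proper subdivision of $T_r$. So it suffices to exhibit, for each $r\in \poi$, an induced subgraph of $W_{2^r\times 2^r}$ isomorphic to a proper subdivision of $T_r$.

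I would proceed by induction on $r$, proving the slightly stronger statement: $W_{2^r\times 2^r}$ contains an induced proper subdivision $H_r$ of $T_r$ such that the vertex playing the role of the root of $T_r$ is a prescribed vertex on the top boundary of the wall, and a few rows on the top of the wall are not used by $H_r$. The base case $r=1$ is immediate, since $T_1$ is a three-vertex path and a proper subdivision is just an induced path of length at least four, which trivially exists in $W_{2\times 2}$ starting from any boundary vertex.

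For the inductive step, partition $W_{2^r\times 2^r}$ vertically into a left half and a right half, each of which contains $W_{2^{r-1}\times 2^{r-1}}$ as an induced subgraph sitting in the \emph{bottom} portion of that half, with a small ``buffer'' column between the two halves and a few unused rows at the top of each half. Apply the inductive hypothesis inside each of the two $W_{2^{r-1}\times 2^{r-1}}$'s to obtain induced proper subdivisions $H_L$ and $H_R$ of $T_{r-1}$, with roots $v_L$ and $v_R$ at their respective top-inner corners. Then, using the top few rows of $W_{2^r\times 2^r}$ (which were deliberately avoided by $H_L$ and $H_R$), connect $v_L$ and $v_R$ through two induced paths meeting at a new vertex $v$ on the top row, with each path of length at least two. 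By routing these two paths along distinct non-adjacent rows of the top band, they are internally anticomplete to each other, anticomplete to $H_L$ except at $v_L$, and anticomplete to $H_R$ except at $v_R$. The union $H_L\cup H_R$ together with these two paths and $v$ is the desired induced proper subdivision $H_r$ of $T_r$, with root $v$ located on the top boundary of $W_{2^r\times 2^r}$, as required by the inductive hypothesis.

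The only real obstacle is the grid-level bookkeeping needed to confirm the anticomplete conditions and the corner/root placements at each step. This is routine for planar embeddings into walls, which is why the paper records this as an easy observation whose proof may be omitted.
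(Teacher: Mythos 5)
The paper explicitly omits the proof of this observation (``we omit the proof as it is easy''), so there is no argument of the authors to compare yours against; I am assessing your proposal on its own terms. Your reduction of the line-graph half of the statement to the wall half is correct and worth keeping: for any subgraph $H$ of $G$ one has $L(H)=L(G)[E(H)]$, because whether two edges share an endpoint does not depend on the ambient graph, so an induced proper subdivision of $T_r$ in the wall immediately yields an induced copy of the line graph of a proper subdivision of $T_r$ in the line graph of the wall. The recursive, H-tree-style embedding of $T_r$ is also the right idea, and the anticompleteness checks you defer really are routine.

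There is, however, one quantitative slip that prevents the induction from closing as literally written. Splitting $W_{2^r\times 2^r}$ into a left half and a right half, each containing a copy of $W_{2^{r-1}\times 2^{r-1}}$, leaves no room for the buffer column you invoke: the widths satisfy $2\cdot 2^{r-1}=2^r$ exactly (and under the usual convention that the $n\times n$ wall has roughly $2n+2$ columns, two copies of $W_{2^{r-1}\times 2^{r-1}}$ already overshoot the width of $W_{2^r\times 2^r}$ by a couple of columns). So the recursion $w(r)\geq 2\,w(r-1)+1$ for the required side length forces $w(r)>2^r$, and the induction hypothesis cannot be instantiated twice inside the given wall. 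The standard repair is to strengthen the hypothesis: require $H_r$ to avoid, say, the last two columns of its wall, so that the two sub-walls can be placed flush against each other while $H_L$ and $H_R$ remain separated by unused columns; alternatively, exploit the quadratic slack in the statement (a proper subdivision of $T_r$ needs only $O(2^r)$ vertices while $W_{2^r\times 2^r}$ has order $2^{2r}$) via a genuinely two-dimensional H-tree layout in side length $O(r\cdot 2^{r/2})$. Relatedly, your induction hypothesis is phrased inconsistently --- the root lies on the top boundary, yet ``a few rows on the top'' are unused --- and the two connecting paths cannot lie in ``distinct non-adjacent rows'' since they meet at $v$; the intended statement (root on the top boundary of a sub-wall placed at the bottom of its half, horizontal segments of the two connecting paths in rows at distance at least two) is recoverable, but should be written down. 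With these repairs the proof is correct.
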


Let us now prove Theorem~\ref{thm:zigzaggedoutcome}:

\begin{proof}[Proof of Theorem~\ref{thm:zigzaggedoutcome}]
First, assume that $\mf{c}$ has an induced subgraph $\mf{b}$ which is a $1$-ample interrupted $(2q+6)$-constellation. Let $x_1\prec \cdots\prec x_{2q+6}$ be the linear order on $S_{\mf{b}}$ with respect to which $\mf{b}$ is interrupted. Since $\mf{b}$ is interrupted, it follows that every vertex in $S_{\mf{b}}\setminus \{x_1, x_2, x_3,x_4\}$ has degree at least $4$ in $\mf{b}$ (and so in $\mf{c}$), and there is a $\mf{b}$-route $R_i$ from $x_{2q+6}$ to $x_i$, for each $i\in \poi_{2q+5}$, such that $(R_i\setminus \{x_{2q+6}\}:i\in \poi_{2q+5})$ are pairwise anticomplete in $\mf{b}$ (hence in $\mf{c})$. But now $H=\bigcup_{i=5}^{2q+5}P_i$ is an induced subgraph of $\mf{c}$ isomorphic to a proper subdivision of $K_{1,2q+1}$ where the center and the leaves of $H$ of degree at least $4$ in $\mf{c}$, a contradiction to Lemma~\ref{lem:zigzagged}. Therefore, $\mf{c}$ has no induced subgraph $\mf{b}$ which is a $1$-ample interrupted $(2q+6)$-constellation.

Next, we prove that $\mf{c}$ has no induced subgraph isomorphic to a subdivision of $T_{64q^2}$ or the line graph of a subdivision of $T_{64q^2}$. This, combined with Observations~\ref{obs:trees}, will also imply that $\mf{c}$ has no induced subgraph isomorphic to a subdivision of $W_{2^{64q^2}\times 2^{64q^2}}$ or the line graph of a subdivision of $W_{2^{64q^2}\times 2^{64q^2}}$.

Suppose for a contradiction that $\mf{c}$ has an induced subgraph isomorphic to (the line graph of) a subdivision of $T_{64q^2}$. Then there is a $(\geq 3)$-subdivision $T$ of $T_{16q^2}$ such that $\mf{c}$ has an induced subgraph $H$ isomorphic to (the line graph of) $T$. Let $J$ be the $(V(H)\cap S_{\mf{c}})$-contraction of $H$. By Observation~\ref{obs:zigzag}\ref{obs:zigzag_a}, $J$ is a star contraction of $H$. From this and the fact that $T$ is a $(\geq 3)$-subdivision of $T_{16q^2}$, it follows that $J$ has a minor isomorphic to $T_{16q^2}$. In particular, we have $\pw(J)\geq \pw(T_{16q^2})\geq 8q^2$.

Let us now show that:

\sta{\label{st:transition} There is an enumeration $x_1,\ldots, x_s$ of the vertices of $\mf{c}_H$ such that if $x_ix_j\in E(\mf{c}_H)$ for some $i,j\in \poi_s$ with $i<j$, then we have $j-i\leq q$.}

Let $\prec$ be the linear order on $S_{\mf{c}}$ with respect to which $\mf{c}$ is $q$-zigzagged, and let $V(\mf{c}_H)=V(H)\cap S_{\mf{c}}=\{x_1,\ldots, x_s\}$ such that  $x_1\prec \cdots \prec x_s$. Suppose that $x_ix_j\in E(\mf{c}_H)$ for some $i,j\in \poi_s$ with $j>i+q$. Then, by the definition of $\mf{c}_H$, there is a $\mf{c}$-route $R$ with $R\subseteq V(H)$ such that $\{x_{i+1},\ldots, x_{i+q}\}\subseteq (V(H)\cap S_{\mf{c}})\setminus \{x_i,x_j\}$ is anticomplete to $R$, a contradiction to the assumption that $\mf{c}$ is zigzagged. This proves \eqref{st:transition}.
\medskip

From \eqref{st:transition}, we deduce that $\pw(\mf{c}_H)\leq q$. Moreover, by Observation~\ref{obs:zigzag}\ref{obs:zigzag_b}, $J$ is isomorphic to a subdivision of a leaf-extension of $\mf{c}_H$. But now by Lemma~\ref{lem:pwsub}, we have $\pw(J)\leq 2q^2+3q+2<8q^2$, a contradiction. This completes the proof of Theorem~\ref{thm:zigzaggedoutcome}.
\end{proof}

\section{Amplification} \label{sec:ample}
The next four sections will be devoted to the proof of Theorem~\ref{thm:mainconstellation}. The first step is to pass to a sufficiently ample constellation; doing so makes it easier to find routes which are pairwise anticomplete. This will be done using the following lemma:

\begin{lemma}\label{lem:getample}
For all $d,l,r,r',s\in \poi$, there are constants $f_{\ref{lem:getample}}=f_{\ref{lem:getample}}(d,l,r,r',s)\in \poi$ and $g_{\ref{lem:getample}}=g_{\ref{lem:getample}}(d,l,r,r',s)\in \poi$ such that for every $(f_{\ref{lem:getample}},g_{\ref{lem:getample}})$-constellation $\mf{c}$, one of the following holds.
 \begin{enumerate}[\rm (a)]
    \item\label{lem:getample_a} There is an induced subgraph of $\mf{c}$ which is isomorphic to either $K_{r,r}$ or a proper subdivision of $K_{r'}$.
    \item\label{lem:getample_b} There is a $d$-ample $(s,l)$-constellation $\mf{d}$ with $S_{\mf{d}} \subseteq S_{\mf{c}}$ and $\mca{L}_{\mf{d}} \subseteq \mca{L}_{\mf{c}}$. 
    \end{enumerate}
\end{lemma}

We will deduce Lemma~\ref{lem:getample} from a straightforward application of the next two results:

\begin{lemma}[Alecu, Chudnovsky, Hajebi and   Spirkl; see Lemma 6.2 in  \cite{tw9}]\label{lem:getampletw9}
For all $l,m,s\in \poi$, there is a constant $f_{\ref{lem:getampletw9}}=f_{\ref{lem:getampletw9}}(l,m,s)\in \poi$ such that for every $(f_{\ref{lem:getampletw9}},l+m^2-1)$-constellation $\mf{c}$ and every $d\in \poi$, one of the following holds.
 \begin{enumerate}[\rm (a)]
        \item\label{lem:getampletw9_a}There is a subgraph of $\mf{c}$ which is isomorphic to a $(\leq  d)$-subdivision of $K_m$.
    \item\label{lem:getampletw9_b} There is a $d$-ample $(s,l)$-constellation $\mf{d}$ with $S_{\mf{d}} \subseteq S_{\mf{c}}$ and $\mca{L}_{\mf{d}} \subseteq \mca{L}_{\mf{c}}$. 
    \end{enumerate}
\end{lemma}

\begin{lemma}[Hajebi \cite{pinned}; see also Dvo\v{r}\'ak \cite{dvorak}]\label{lem:pinnedasconj}
   For all $d,r,r'\in \poi$, there is a constant $f_{\ref{lem:pinnedasconj}}=f_{\ref{lem:pinnedasconj}}(d,r,r')\in \poi$ with the following property. Let $G$ be a $K_{r}$-free and $K_{r,r}$-free graph which has a subgraph isomorphic to a $(<d)$-subdivision of $K_{f_{\ref{lem:pinnedasconj}}}$. Then $G$ has an induced subgraph isomorphic to a proper $(<d)$-subdivision of $K_{r'}$.
\end{lemma}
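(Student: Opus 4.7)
The plan is to start from a (not necessarily induced) subgraph $H \subseteq G$ isomorphic to a $(<d)$-subdivision of $K_N$ for $N = f_{\ref{lem:pinnedasconj}}(d,r,r')$ chosen very large (as a tower of hypergraph Ramsey numbers in $d,r,r'$), and to iteratively prune branch vertices so that every chord of $H$ --- that is, every edge of $G$ not belonging to $H$ --- is eliminated from the resulting sub-subdivision. Let $v_1,\dots,v_N$ denote the branch vertices of $H$ and $P_{ij}$ (for $1\leq i<j\leq N$) the corresponding subdivision paths, each of bounded length. As a preprocessing step, I would replace each $P_{ij}$ by a shortest induced $v_i$--$v_j$ path in $G[V(P_{ij})]$; this does not increase the length of any path, and ensures that each $P_{ij}$ is individually induced in $G$, so that the only chords left to eliminate are those between different branches.

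Now apply Ramsey's theorem to the coloring of pairs $\{i,j\}$ by $|E(P_{ij})|$ (boundedly many colors) to obtain a large $I_0 \subseteq [N]$ on which all $P_{ij}$ have a common length $\ell$. If $\ell = 1$, then $\{v_i : i \in I_0\}$ forms a clique in $G$, which contradicts $K_r$-freeness once $|I_0|\geq r$; hence $\ell \geq 2$ and the subdivision restricted to $I_0$ is proper. A second Ramsey application, $2$-coloring pairs by whether $v_iv_j \in E(G)$, combined with $K_r$-freeness, yields $I_1 \subseteq I_0$ on which the branch vertices are pairwise non-adjacent in $G$.

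The substantive steps eliminate chords from branch vertices to the interiors of other pairs' paths, and between the interiors of two disjoint pairs' paths. For each ordered triple $(i,j,k)$ of distinct indices in $I_1$, I encode as a \emph{type} the adjacency pattern of $v_k$ to the (boundedly many) internal vertices of $P_{ij}$, and apply hypergraph Ramsey on triples to obtain a large $I_2 \subseteq I_1$ of uniform type. If the uniform type is nontrivial --- say, for some fixed position $m$ and some fixed relative ordering, $v_k$ is always adjacent to the $m$-th internal vertex of $P_{ij}$ --- then picking $r$ pairs $\{i_s,j_s\}$ with small indices in $I_2$ and $r$ further indices $k_t \in I_2$ with large indices yields $r$ distinct internal vertices $p_{i_s j_s}$ (living on pairwise disjoint path interiors), each adjacent to every $v_{k_t}$, hence a $K_{r,r}$ subgraph of $G$ (note: the two sides of a $K_{r,r}$-subgraph need not be independent, only all $r\cdot r$ cross-edges must be present), contradicting $K_{r,r}$-freeness. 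Thus the uniform triple-type is empty. An entirely analogous argument, using $4$-uniform hypergraph Ramsey applied to the bipartite adjacency patterns between $P_{ij}^*$ and $P_{kl}^*$ for disjoint index-pairs, produces $I_3 \subseteq I_2$ on which interiors of different paths are pairwise anticomplete in $G$.

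After these four rounds of pruning, the induced subgraph of $G$ on $\{v_i : i \in I_3\} \cup \bigcup_{i<j \in I_3} V(P_{ij}^*)$ is a proper $(<d)$-subdivision of $K_{|I_3|}$, and choosing $N$ large enough to ensure $|I_3| \geq r'$ completes the proof. The main technical obstacle lies in the amplification arguments of the last two steps: an appropriately ordered hypergraph Ramsey is needed so that the bad chord type is witnessed consistently across arbitrarily many index pairs, and the pairs and indices must be selected with compatible relative orderings so as to supply genuinely distinct vertices realizing the forbidden $K_{r,r}$. The bookkeeping, though standard, is intricate and has the flavor of K\H{o}v\'ari--S\'os--Tur\'an / K\"uhn--Osthus style subdivision-cleaning.
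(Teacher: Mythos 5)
This lemma is not proved in the paper at all; it is imported verbatim from Hajebi~\cite{pinned} (see also Dvo\v{r}\'ak~\cite{dvorak}), so there is no internal proof to compare against. Your Ramsey-based subdivision-cleaning strategy is indeed the standard route to results of this type, but as written it has two concrete defects.

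First, recall that in this paper ``$K_{r,r}$-free'' means no \emph{induced} $K_{r,r}$. Your parenthetical remark that ``only all $r\cdot r$ cross-edges must be present'' is therefore exactly the wrong thing to say: a $K_{r,r}$ subgraph does not contradict $K_{r,r}$-freeness (e.g.\ $K_{2r}$ contains $K_{r,r}$ as a subgraph but not as an induced subgraph). The step is repairable, since the hypothesis also gives $K_r$-freeness: run the argument with $R(r,r)$ vertices on each side and apply Ramsey to each side to extract independent sets, which together with the complete bipartite adjacency yields an induced $K_{r,r}$. But you must actually do this; in your step eliminating branch-to-interior chords, the $r$ interior vertices $p_{i_sj_s}$ have not yet been shown pairwise non-adjacent, so the bipartite graph you exhibit is only a subgraph.

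Second, your cleaning steps do not eliminate all chords. A subdivision (as defined in this paper) requires the interiors of \emph{all} pairs of subdivision paths to be anticomplete, including paths $P_{ij}$ and $P_{ik}$ that share a branch vertex. Your $4$-uniform Ramsey step only treats disjoint index pairs, so chords between $P_{ij}^*$ and $P_{ik}^*$ survive, and the final graph need not be an induced subdivision of $K_{|I_3|}$. This case can be handled by the same method --- a $3$-uniform Ramsey application recording, for each triple with a designated shared index, the bipartite adjacency pattern between the two interiors, followed by the same $K_{r,r}$ extraction if the pattern is nonempty --- but it is missing from your argument as written. With these two repairs (and the bookkeeping over order types that you already acknowledge), the proof goes through.
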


\begin{proof}[Proof of Lemma~\ref{lem:getample}]
    Let $m=f_{\ref{lem:pinnedasconj}}(d,\max\{r,4\},r')$, let
    $$f_{\ref{lem:getample}}=f_{\ref{lem:getample}}(d,l,r,r',s)=f_{\ref{lem:getampletw9}}(l,m,s)$$
    and let
$$g_{\ref{lem:getample}}=g_{\ref{lem:getample}}(d,l,r,r',s)=l+m^2-1.$$
Let $\mf{c}$ be a $(f_{\ref{lem:getample}},g_{\ref{lem:getample}})$-constellation. Apply Lemma~\ref{lem:getampletw9} to $\mf{c}$. Since \ref{lem:getample}\ref{lem:getample_b} and \ref{lem:getampletw9}\ref{lem:getampletw9_b} are identical, we may assume that \ref{lem:getampletw9}\ref{lem:getampletw9_a} holds, that is, $\mf{c}$ has a subgraph isomorphic to a $(\leq  d)$-subdivision of $K_m$. Now, observe that $\mf{c}$ is $K_4$-free. Moreover, if $G$ has a subgraph isomorphic to $K_{r,r}$, then \ref{lem:getample}\ref{lem:getample_a} holds. So we may assume that $\mf{c}$ is $K_{r,r}$-free, as well. This, along with the choice of $m$, allows for an application of Lemma~\ref{lem:pinnedasconj}. We conclude that $\mf{c}$ has an induced subgraph isomorphic to a proper subdivision of $K_{r'}$, which in turn implies that \ref{lem:getample}\ref{lem:getample_a} holds, as desired.
\end{proof}

\section{Mixed constellations} \label{sec:mixed}
In this section, we prove a preliminary version of Theorem~\ref{thm:mainconstellation} in which the interrupted and the zigzagged outcomes are replaced by a ``mixture'' of the two.

Recall that for $q\in \poi$, a constellation $\mf{c}$ is $q$-zigzagged if there is a linear order $\pre$ on $S_{\mf{c}}$  such for all $x,y\in S_{\mf{c}}$ with $x\prec y$, every $\mf{c}$-route $R$ from $x$ to $y$ and every $q$-subset $Q$ of $S_{\mf{c}}$ with $x\prec Q\prec y$, some vertex in $Q$ has a neighbor in $R$. In a ``mixed'' constellation, we relax this condition by allowing $Q$ to also contain a controlled number of vertices in $S_{\mf{c}}$ that are \textbf{not} between the ends of $R$.

More precisely, for $q\in \poi$ and $q^-,q^+\in \poi\cup \{0\}$, we say a constellation $\mf{c}$ is \textit{$(q^-,q,q^+)$-mixed} if there is a linear order $\pre$ on $S_{\mf{c}}$ for which the following holds. Let $x,y\in S_{\mf{c}}$ with $x\prec y$, let $R$ be a $\mf{c}$-route from $x$ to $y$ and let $Q^-, Q,Q^+\subseteq S_{\mf{c}}$ such that $|Q^-|=q^-$, $|Q|=q$, $|Q^+|=q^+$ and $Q^-\prec x\prec Q \prec y\prec Q^+$.
Then some vertex in $Q^-\cup Q\cup Q^+$ has a neighbor in $R$. In particular, observe that $\mf{c}$ is $(0,q,0)$-mixed if and only if $\mf{c}$ is $q$-zigzagged. Also, for constellations $\mf{b}$ and $\mf{c}$ where $\mf{b}$ sits in $\mf{c}$, if $\mf{c}$ is $(q^-,q,q^+)$-mixed for some $q^-,q^+\in \poi\cup \{0\}$ and $q\in \poi$, then so is $\mf{b}$.
\medskip

We show that:

\begin{lemma}\label{lem:gettingmixed}
   For all $d,l,r,s,t\in \poi$, there are constants $f_{\ref{lem:gettingmixed}}=f_{\ref{lem:gettingmixed}}(d,l,r,s,t)\in \poi$ and $g_{\ref{lem:gettingmixed}}=g_{\ref{lem:gettingmixed}}(d,l,r,s,t)\in \poi$ with the following property. Let $\mf{c}$ be a $(f_{\ref{lem:gettingmixed}},g_{\ref{lem:gettingmixed}})$-constellation. Then one of the following holds.
   \begin{enumerate}[\rm (a)]
        \item\label{lem:gettingmixed_a} There is an induced subgraph  of $\mf{c}$ that is isomorphic to either $K_{r,r}$ or a proper subdivision of $K_{2t+2}$.
    \item\label{lem:gettingmixed_b} There is a $d$-ample and $(2t,2t,2t)$-mixed $(s,l)$-constellation $\mf{b}$ which sits in $\mf{c}$.
    \end{enumerate}
\end{lemma}
The proof uses the following version of Ramsey's theorem:

\begin{theorem}[Ramsey \cite{multiramsey}]\label{thm:multiramsey}
For all $l,m,n\in \poi$, there is a constant $f_{\ref{thm:multiramsey}}=f_{\ref{thm:multiramsey}}(l,m,n)\in \poi$ with the following property. Let $U$ be a set of cardinality at least $f_{\ref{thm:multiramsey}}$ and let $F$ be a non-empty set of cardinality at most $l$. Let $\Phi:\binom{U}{m}\rightarrow F$ be a map. Then there exist $i\in F$ and an $n$-subset $Z$ of $U$ such that $\Phi(X)=i$ for all $X\in \binom{Z}{m}$.
\end{theorem}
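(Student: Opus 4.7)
The plan is to prove Theorem~\ref{thm:multiramsey} by induction on $m$, following the standard Erd\H{o}s--Rado iterative argument. The base case $m=1$ is the pigeonhole principle: taking $f_{\ref{thm:multiramsey}}(l,1,n) := l(n-1)+1$, if $|U| \geq l(n-1)+1$ and $\Phi$ colors the singletons of $U$ using values in a set of cardinality at most $l$, then some color class contains at least $n$ elements, yielding the desired monochromatic $n$-subset.

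For the inductive step, I would assume the theorem for $m-1$ and set $N := l(n-1) + m$. I would then define a sequence of positive integers $a_{N-1}, a_{N-2}, \ldots, a_0$ recursively by $a_{N-1} := m-1$ and $a_{t-1} := f_{\ref{thm:multiramsey}}(l,m-1,a_t) + 1$ for $t$ running from $N-1$ down to $1$, and declare $f_{\ref{thm:multiramsey}}(l,m,n) := a_0$. Given $U$ with $|U| \geq a_0$, the plan is to iteratively build elements $u_1, \ldots, u_N$ and nested sets $U = U_0 \supseteq U_1 \supseteq \cdots \supseteq U_{N-1}$ as follows: at step $t$ (for $1 \leq t \leq N-1$), pick any $u_t \in U_{t-1}$, then apply the inductive hypothesis to the coloring $\Phi_t : \binom{U_{t-1} \setminus \{u_t\}}{m-1} \to F$ given by $\Phi_t(X) := \Phi(X \cup \{u_t\})$, obtaining a subset $U_t \subseteq U_{t-1} \setminus \{u_t\}$ with $|U_t| \geq a_t$ that is monochromatic for $\Phi_t$ with some color $c_t \in F$. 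The recursion is calibrated precisely so that $|U_{t-1} \setminus \{u_t\}| \geq a_{t-1} - 1 \geq f_{\ref{thm:multiramsey}}(l,m-1,a_t)$ at each step. At step $N$, simply pick any $u_N \in U_{N-1}$, which is possible since $|U_{N-1}| \geq a_{N-1} = m-1 \geq 1$.

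The key observation, verified by a straightforward chain of inclusions $U_{i_1} \supseteq U_{i_1 + 1} \supseteq \cdots$, is that for any indices $1 \leq i_1 < i_2 < \cdots < i_m \leq N$, the elements $u_{i_2}, \ldots, u_{i_m}$ all lie in $U_{i_1}$, and hence $\Phi(\{u_{i_1}, u_{i_2}, \ldots, u_{i_m}\}) = \Phi_{i_1}(\{u_{i_2}, \ldots, u_{i_m}\}) = c_{i_1}$. Thus the $\Phi$-color of any $m$-subset of $\{u_1, \ldots, u_N\}$ depends only on its smallest index. To finish, I would apply pigeonhole to $c_1, \ldots, c_{N-m+1}$: there are $N - m + 1 = l(n-1)+1$ such values, taking colors in a set of size at most $l$, so some color $c \in F$ is taken at least $n$ times, say at indices $i_1 < i_2 < \cdots < i_n$. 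By the key observation, every $m$-subset of $\{u_{i_1}, \ldots, u_{i_n}\}$ has $\Phi$-color $c$, so this is the desired monochromatic $n$-subset.

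I do not expect any genuine obstacle: the argument is essentially bookkeeping. The only mildly delicate point is calibrating the recursive definition of $a_{t-1}$ in terms of $a_t$ so that the inductive hypothesis applies at every iteration, and unwinding the recursion then shows that $f_{\ref{thm:multiramsey}}(l,m,n)$ grows as an iterated tower of height roughly $m$, which is the standard bound.
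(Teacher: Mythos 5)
The paper does not prove this statement; it is quoted as a known result (Ramsey's theorem for $m$-uniform colorings with at most $l$ colors), so there is no in-paper argument to compare against. Your proof is the standard induction-on-uniformity (Erd\H{o}s--Rado/Skolem) argument and is correct: the base case is pigeonhole, the recursion on $a_t$ is calibrated so the inductive hypothesis applies at every step, the nesting $U_{j-1}\subseteq U_{i_1}$ for $j>i_1$ makes the color of an $m$-subset depend only on its least index, and the final pigeonhole on $c_1,\dots,c_{N-m+1}$ with $N-m+1=l(n-1)+1$ closes the argument. The only implicit point worth stating is that the inductive step assumes $m\geq 2$ (so that $a_{N-1}=m-1\geq 1$ and the $(m-1)$-uniform hypothesis is available), which is exactly the case not covered by the base case.
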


The main idea of the proof is the following: If there is a large $X$ subset of $S_{\mf{c}}$ such that for every two vertices in $X$, there are many $\mf{c}$-routes between them avoiding neighbors of the remaining vertices in $Q$, then this is an induced subdivision of $K_{|Q|}$. So we apply Theorem \ref{thm:multiramsey}, asking for each subset $X$ which $\mf{c}$-routes exist and avoid neighbors of other vertices. 

\begin{proof}[Proof of Lemma~\ref{lem:gettingmixed}]
Let $$m=f_{\ref{thm:multiramsey}}\left(\binom{2t+2}{2}\binom{(l+1)\binom{2t+2}{2}}{l},2t+2,s\right).$$
Our choice of $f_{\ref{lem:gettingmixed}}$ and $g_{\ref{lem:gettingmixed}}$ involves the above value $m$ as well as the constants from Lemma~\ref{lem:getample}. Specifically, we claim that:
$$f_{\ref{lem:gettingmixed}}=f_{\ref{lem:gettingmixed}}(d,l,r,s,t)=f_{\ref{lem:getample}}\left(d,(l+1)\binom{2t+2}{2},r,2t+2,m\right)$$
and
$$g_{\ref{lem:gettingmixed}}=g_{\ref{lem:gettingmixed}}(d,l,r,s,t)=g_{\ref{lem:getample}}\left(d,(l+1)\binom{2t+2}{2},r,2t+2,m\right)$$
satisfy the lemma.

Let $\mf{c}$ be a $(f_{\ref{lem:gettingmixed}},g_{\ref{lem:gettingmixed}})$-constellation. Assume that \ref{lem:gettingmixed}\ref{lem:gettingmixed_a} does not hold. By the choice of $f_{\ref{lem:gettingmixed}}$ and $g_{\ref{lem:gettingmixed}}$, we can apply Lemma~\ref{lem:getample} to $\mf{c}$. Note that \ref{lem:getample}\ref{lem:getample_a} implies \ref{lem:gettingmixed}\ref{lem:gettingmixed_a}. So we may assume that \ref{lem:getample}\ref{lem:getample_b} holds, that is, there is a $d$-ample $(m, (l+1)\binom{2t+2}{2})$-constellation $\mf{m}$ which sits $\mf{c}$.

For a $(2t+2)$-subset $T$ of $S_{\mf{m}}$, we say that a path $L\in \mca{L}_{\mf{m}}$ is \textit{$T$-friendly} if for all distinct $x,y\in T$, there is a $\mf{c}$-route $R$ from $x$ to $y$ with $R^*\subseteq L$ such that $T\setminus \{x,y\}$ is anticomplete to $R^*$. We claim that:

     \sta{\label{st:mostpathswin} For every $(2t+2)$-subset $T$ of $S_{\mf{m}}$, the number of $T$-friendly paths in $\mca{L}_{\mf{m}}$ is fewer than $\binom{2t+2}{2}$.}

Suppose not. Then for some $T\subseteq S_{\mf{m}}$ with $|T|=2t+2$, there are at least $\binom{2t+2}{2}$ paths in $\mca{L}_{\mf{m}}$ that are $T$-friendly. In particular, one may choose $\binom{2t+2}{2}$ pairwise distinct $T$-friendly paths $(L_{\{x,y\}}:\{x,y\}\in \binom{T}{2})$ in $\mca{L}_{\mf{m}}$, each assigned to a $2$-subset of $T$. From the definition of a $T$-friendly path, it follows that for every $\{x,y\}\in \binom{T}{2}$, there is a $\mf{c}$-route $R_{\{x,y\}}$ from $x$ to $y$ with $R_{\{x,y\}}^*\subseteq L_{\{x,y\}}$ such that $T\setminus \{x,y\}$ is anticomplete to $R_{\{x,y\}}^*$. But then the subgraph of $\mf{m}$ (and so of $\mf{c}$) induced by 
$$\bigcup_{\{x,y\}\in \binom{T}{2}}R_{\{x,y\}}$$
is isomorphic to a proper subdivision of $K_{2t+2}$, a contradiction to the assumption that $\ref{lem:gettingmixed}\ref{lem:gettingmixed_a}$ does not hold. This proves \eqref{st:mostpathswin}.
\medskip

For the rest of the proof, fix a linear order $\pre$ on $S_{\mf{m}}$. From \eqref{st:mostpathswin}, we deduce that:

\sta{\label{st:mostpathswin2} For every $T=\{x_1,\ldots, x_{2t+2}\}\subseteq S_{\mf{m}}$ where $x_1\prec \cdots\prec x_{2t+2}$, there is a $2$-subset $\{i(T),j(T)\}$ of $\poi_{2t+2}$ with $i(T)<j(T)$ as well as and $l$-subset $\mca{L}(T)$ of $\mca{L}_{\mf{m}}$ for which the following holds. Let $R$ be a $\mf{c}$-route from $x_{i(T)}$ to $x_{j(T)}$ such that $R^*\subseteq V(\mca{L}(T))$. Then some vertex in $T\setminus \{x_{i(T)},x_{j(T)}\}$ has a neighbor in $R$.}

Let $\mca{A}_T$ be the set of all paths in $\mca{L}_{\mf{m}}$ that are not $T$-friendly. Then, for every $L\in \mca{A}_T$, there is a $2$-subset $\{i_{L,T},j_{L,T}\}$ of $\poi_{2t+2}$ with $i_{L,T}<j_{L,T}$ such that for every $\mf{c}$-route $R$ from $x_{i_{L,T}}$ to $x_{j_{L,T}}$ with $R^*\subseteq L$, some vertex in $T\setminus \{x_{i_{L,T}},x_{j_{L,T}}\}$ has a neighbor in $R$. Moreover, by \eqref{st:mostpathswin}, we have $$|\mca{A}_T|>|\mca{L}_{\mf{m}}|-\binom{2t+2}{2}=l\binom{2t+2}{2}.$$ It follows that there exists $\mca{L}(T)\subseteq \mca{A}_T\subseteq \mca{L}_{\mf{m}}$ with $|\mca{L}(T)|=l$ such that for all distinct $L,L'\in \mca{L}(T)$, we have $i_{L,T}=i_{L',T}$ and $j_{L,T}=j_{L',T}$. This proves \eqref{st:mostpathswin2}.
\medskip

Henceforth, for each $(2t+2)$-subset $T$ of $S_{\mf{m}}$, let $\{i(T),j(T)\}\subseteq \poi_{2t+2}$ and $\mca{L}(T)\subseteq \mca{L}_{\mf{m}}$ be as given by \eqref{st:mostpathswin2}. Let 
$$\Phi:\binom{S_{\mf{m}}}{2t+2}\rightarrow \binom{\poi_{2t+2}}{2}\times \binom{\mca{L}_{\mf{m}}}{l}$$
be the map given by 
$\Phi(T)=(\{i(T),j(T)\},\mca{L}(T))$. By the choice of $m$, we can apply Theorem~\ref{thm:multiramsey} and deduce that there exist $i,j\in \poi_{2t+2}$ with $i<j$, an $l$-subset  $\mca{L}$ of  $\mca{L}_{\mf{m}}$ and an $s$-subset $S$ of $S_{\mf{m}}$, such that for every $(2t+2)$-subset $T$ of $S$, we have $i(T)=i, j(T)=j$ and $\mca{L}(T)=\mca{L}$.

Now, consider the $(s,l)$-constellation $\mf{b}=(S,\mca{L})$. We wish to prove that $\mf{b}$ satisfies \ref{lem:gettingmixed}\ref{lem:gettingmixed_b}. Note that $\mf{b}$ sits in $\mf{m}$. In particular, $\mf{b}$ is $d$-ample because $\mf{m}$ is, and $\mf{b}$ sits in $\mf{c}$ because $\mf{m}$ does. It remains to show that $\mf{b}$ is $(2t,2t,2t)$-mixed. To see this, consider the restriction of $\pre$ to $S$. Let $x,y\in S$ with $x\prec y$, let $R$ be $\mf{c}$-route from $x$ to $y$ and let $Q^-, Q,Q^+\subseteq S_{\mf{c}}$ such that $|Q^-|=|Q|=|Q^+|=2t$ and $Q^-\prec x\prec Q \prec y\prec Q^+$. Recall that $i,j\in \poi_{2t+2}$ with $i<j$, and so we may choose $X^-\subseteq Q^-, X\subseteq Q$ and $X^+\subseteq Q^+$ such that $|X^-|=i-1, |X|=j-i-1$ and $|X^+|=2t+2-j$. In particular, we have $X^-\prec x\prec X \prec y\prec X^+$, and
$$T=X^-\cup \{x\}\cup X\cup \{y\}\cup X^+$$
is a $(2t+2)$-subset of $S$. Hence, by the choice of $i,j\in \poi_{2t+2}$, some vertex in $T\setminus \{x,y\}=X^-\cup X\cup X^+\subseteq Q^-\cup Q\cup Q^+$ has a neighbor in $R$. This completes the proof of Lemma~\ref{lem:gettingmixed}.
\end{proof}

\section{From mixed to zigzagged} \label{sec:zigzagged}
In this section, we show that:

\begin{lemma}\label{lem:mixedtozigzag}
   \sloppy For all $l,l',s,s',t\in \poi$ and $\sigma\in \poi\cup \{0\}$, there are constants $f_{\ref{lem:mixedtozigzag}}=f_{\ref{lem:mixedtozigzag}}(l,l',s,s',t, \sigma)\in \poi$ and  $g_{\ref{lem:mixedtozigzag}}=g_{\ref{lem:mixedtozigzag}}(l,l',s,s',t, \sigma)\in \poi$ with the following property. Let $q\in \poi$ and let $q^-,q^+\in \poi\cup \{0\}$ such that $q^-+q^+=\sigma$. Let $\mf{b}$ be an ample and $(q^-,q,q^+)$-mixed $(f_{\ref{lem:mixedtozigzag}},g_{\ref{lem:mixedtozigzag}})$-constellation. Then one of the following holds.
   \begin{enumerate}[\rm (a)]
        \item\label{lem:mixedtozigzag_a} There is an induced subgraph of $\mf{b}$ isomorphic to a proper subdivision of $K_{2t+2}$.
        \item\label{lem:mixedtozigzag_b} There is an interrupted $(s,l)$-constellation which sits in $\mf{b}$. 
    \item\label{lem:mixedtozigzag_c} There is a $q$-zigzagged $(s',l')$-constellation which sits in $\mf{b}$.
    \end{enumerate}
\end{lemma}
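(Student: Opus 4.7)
My plan is to induct on $\sigma = q^- + q^+$. The base case $\sigma = 0$ is immediate: $\mf{b}$ is $q$-zigzagged by definition, and any $(s',l')$-constellation sitting in $\mf{b}$ inherits this property, so outcome~\ref{lem:mixedtozigzag_c} holds provided $f_{\ref{lem:mixedtozigzag}} \geq s'$ and $g_{\ref{lem:mixedtozigzag}} \geq l'$. For the inductive step $\sigma \geq 1$, I would first note that reversing the witnessing linear order on $S_{\mf{b}}$ exchanges $q^-$ with $q^+$, so we may assume $q^+ \geq 1$. The goal is then either to find outcomes~\ref{lem:mixedtozigzag_a} or~\ref{lem:mixedtozigzag_b} outright, or to identify a vertex $z \in S_{\mf{b}}$ whose removal yields an ample $(q^-, q, q^+ - 1)$-mixed sub-constellation $\mf{d}$ sitting in $\mf{b}$, to which the inductive hypothesis at $\sigma - 1$ may be applied.

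The first move is an iterated Ramsey extraction aimed at outcome~\ref{lem:mixedtozigzag_b}. Set $S_0 = S_{\mf{b}}$, and at each stage $k = 1, \dots, s$ let $z_k$ be the maximum element of $S_{k-1}$ with respect to $\pre$ and $2$-color each pair $\{x, y\} \in \binom{S_{k-1} \setminus \{z_k\}}{2}$ as Type~A if $z_k$ has a neighbor in every $\mf{b}$-route from $x$ to $y$, and Type~B otherwise. Theorem~\ref{thm:multiramsey} extracts a monochromatic $S_k \subseteq S_{k-1} \setminus \{z_k\}$ of appropriate size. If every stage is Type~A, then the sub-constellation $\mf{d}$ with $S_{\mf{d}} = \{z_1, \ldots, z_s\}$ (ordered $z_s \prec \cdots \prec z_1$) and $\mca{L}_{\mf{d}}$ any $l$ full paths of $\mca{L}_{\mf{b}}$ yields outcome~\ref{lem:mixedtozigzag_b}: indeed, for $j > i > k$ both $z_j$ and $z_i$ lie in $S_k$, and Type~A at stage $k$ forces $z_k$ to be a neighbor of every $\mf{b}$-route---hence every $\mf{d}$-route---from $z_j$ to $z_i$.

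Otherwise Case~B arises at some stage $k$: every pair in $S_k$ admits a $\mf{b}$-route disjoint from $N(z_k)$. For each $L \in \mca{L}_{\mf{b}}$, the $z_k$-neighbors on $L$ partition it into $z_k$-free arcs $L_0, \dots, L_{k_L}$ with incidence sets $N_{L, i} = \{x \in S_{\mf{b}} \setminus \{z_k\} : x \text{ has a neighbor in } L_i\}$. Since the interior of any $z_k$-avoiding route lies in a single $z_k$-free arc, Case~B yields that every pair in $\binom{S_k}{2}$ is contained in some $N_{L, i}$. A pigeonhole-and-sunflower argument, selecting for each $L$ an arc maximizing $|N_{L, i}|$ and then extracting a common subset across paths, produces $\mca{L}^* \subseteq \mca{L}_{\mf{b}}$ of size $g^*$ and $S^* \subseteq S_k$ of size $f^*$ with $S^* \subseteq N_{L, i^*(L)}$ for every $L \in \mca{L}^*$. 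The induced $\mf{d}$ with $S_{\mf{d}} = S^*$ and $\mca{L}_{\mf{d}}$ the chosen arcs sits in $\mf{b}$, is ample, and satisfies $V(\mca{L}_{\mf{d}}) \cap N(z_k) = \varnothing$; since $S_{\mf{b}}$ is stable, every $\mf{d}$-route is disjoint from $\{z_k\} \cup N(z_k)$. The $(q^-, q, q^+)$-mixedness of $\mf{b}$, applied with each admissible $Q^{+'}$ of size $q^+ - 1$ augmented by $z_k$ into a valid $Q^+$, then gives $(q^-, q, q^+ - 1)$-mixedness for $\mf{d}$ with respect to the restricted order, whereupon the inductive hypothesis at $\sigma - 1$ yields one of outcomes~\ref{lem:mixedtozigzag_a}, \ref{lem:mixedtozigzag_b}, or~\ref{lem:mixedtozigzag_c}, each of which transfers directly back to $\mf{b}$.

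The main obstacle is the Case~B extraction in the \emph{adversarial} regime where many paths $L$ have so many $z_k$-neighbors that every arc has small incidence set, defeating the naive sunflower. In this regime I would exploit the abundance of $z_k$-avoiding routes guaranteed by Case~B, together with $\mf{b}$'s mixedness (which forces such routes to carry many witness neighbors), and apply a Ramsey argument in the spirit of the proof of Lemma~\ref{lem:gettingmixed} (classifying pairs of vertices by the availability of many internally-disjoint routes avoiding the others) to extract an induced proper subdivision of $K_{2t+2}$ in $\mf{b}$, giving outcome~\ref{lem:mixedtozigzag_a}. Propagating the quantitative losses from the Ramsey steps through the induction is routine but delicate bookkeeping; the functions $f_{\ref{lem:mixedtozigzag}}$ and $g_{\ref{lem:mixedtozigzag}}$ are defined recursively in $\sigma$ with sufficient slack to absorb each level's Ramsey and sunflower loss.
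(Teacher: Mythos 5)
Your skeleton matches the paper's in spirit: peel off the maximum vertex $v$ of the witnessing order, observe that the $\sigma=0$ case is immediate, and try to either certify that $v$ hits every route (leading to outcome~\ref{lem:mixedtozigzag_b}) or find a sub\hbox{-}constellation avoiding $N(v)$ that is $(q^-,q,q^+-1)$\hbox{-}mixed (reducing $\sigma$). However, the two proofs diverge precisely where the work lies, and your version leaves a genuine gap there. The paper inducts on $s+\sigma$ and performs the Type~A~/~Type~B split \emph{per path} $L\in\mathcal{L}_{\mathfrak{b}}$, via the hypergraph $H_L$ on the arcs $\mathcal{K}_L$ of $L\setminus N_L(v)$ with hyperedges $\mathcal{K}_{x,L}$: paths with $\nu(H_L)$ large are put into $\mathcal{M}$ (after Ramsey, $v$ hits every route in those paths and we recurse on $s-1$, \emph{not} directly producing interrupted), while paths with $\nu(H_L)$ small go into $\mathcal{N}$, where it is shown that $\lambda(H_L)\le 2t+1$ (else a proper subdivision of $K_{2t+2}$ appears), and then Theorem~\ref{thm:hglemma} (Ding--Seymour--Winkler) bounds $\tau(H_L)$, forcing a single arc $K_L$ meeting many vertices of $S_{\mathfrak{b}}\setminus\{v\}$; pigeonhole then gives the $(q^-,q,q^+-1)$-mixed sub-constellation. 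Your version colors \emph{pairs} of vertices and Ramsey-extracts, so a monochromatic Type~B set $S_k$ only guarantees that each pair admits a $v$-avoiding route in \emph{some} path and \emph{some} arc, with no common $L$ or arc across pairs. The ``adversarial regime'' you flag (every arc has a small incidence set) is exactly where your sunflower breaks down, and your fallback---``abundance of $v$-avoiding routes $+$ mixedness $+$ a Ramsey argument yields $K_{2t+2}$''---is not proved and is not obviously true: the per-pair routes may live in overlapping arcs of the same path, preventing the internal disjointness and mutual anticompleteness needed for a proper subdivision of $K_{2t+2}$, and the vertices of degree $\ge 3$ must land in $S_{\mathfrak{b}}$, which your sketch does not arrange. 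Moreover, paths $L$ with $\nu(H_L)$ large are a regime your scheme cannot detect at the per-pair level; these are precisely the paths the paper handles by reducing $s$, a mechanism your $\sigma$-only induction lacks.

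The missing ingredient is the $\tau\le f(\nu,\lambda)$ bound: it is what lets the paper conclude that a path with $\nu(H_L)$ small and no induced proper subdivision of $K_{2t+2}$ \emph{must} have a single arc incident to many $S_{\mathfrak{b}}$-vertices, i.e.\ it rules out your adversarial regime rather than extracting $K_{2t+2}$ from it. Without Theorem~\ref{thm:hglemma} (or an explicit substitute), and without a mechanism for the $\nu$-large paths, the Case~B step of your proposal does not close. Everything else---the base cases, the reversal symmetry, the transfer of $(q^-,q,q^+-1)$-mixedness via $v\notin N(R)$ for any route $R$ of the extracted sub-constellation, and the recursive bookkeeping---is consistent with the paper's proof.
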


The proof of Lemma~\ref{lem:mixedtozigzag} uses a result from \cite{hglemma}, which we state below. By a \textit{hypergraph} we mean a pair $H=(V(H),E(H))$ where $V(H)$ is a finite set, $E(H)\subseteq 2^{V(H)} \setminus \{\varnothing\}$ and $V(H),E(H)\neq \varnothing$. The elements of $H$ are called the \textit{vertices} of $H$ and the elements of $E(H)$ are called the \textit{hyperedges} of $H$. We need to define three parameters associated with a hypergraph $H$:
\begin{enumerate}[1.]
    \item Let $\nu(H)$ be the maximum number of pairwise disjoint hyperedges in $H$.
    \item Let $\tau(H)$ be the minimum cardinality of a set $X\subseteq V(H)$ where $e\cap X\neq \varnothing$ for all $e\in E(H)$.
    \item Let $\lambda(H)$ be the maximum $k\in \poi$ for which there is a $k$-subset $F$ of $E(H)$ with the following property: for every $2$-subset $\{e,e'\}$ of $F$, some vertex $v_{\{e,e'\}}\in V(H)$ satisfies $\{f\in F: v_{\{e,e'\}}\in f\}=\{e,e'\}$.
\end{enumerate}
(Observe that if the hyperedges of $H$ are pairwise disjoint, then we have $\lambda(H)=1$, and otherwise we have $\lambda(H)\geq 2$.)

The following is proved in \cite{hglemma}:
\begin{theorem}[Ding, Seymour and Winkler \cite{hglemma}]\label{thm:hglemma}
    Let $a,a'\in \poi$ and let $H$ be a hypergraph with $\nu(H)\leq a$ and $\lambda(H)\leq a'$. Then we have 
    $$\tau(H)\leq 11a^2(a+a'+3)\binom{a+a'}{a'}^2.$$
\end{theorem}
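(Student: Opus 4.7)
The plan is to prove Theorem~\ref{thm:hglemma} by induction on $a+a'$, driven by a greedy dichotomy that simultaneously tries to grow a matching and to grow a $\lambda$-witness. The base case $a'=1$ is immediate: by the parenthetical remark preceding the statement, $\lambda(H)\le 1$ forces the hyperedges of $H$ to be pairwise disjoint, so $E(H)$ itself is a matching and $\tau(H)=\nu(H)\le a$, well inside the claimed bound. The base case $a=1$ is the ``intersecting'' case, where all hyperedges pairwise meet; here the bound must be extracted directly from $\lambda(H)\le a'$, and this case is where the bulk of the combinatorial content lives.

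For the inductive step, the first concrete move is to fix a maximum matching $M=\{m_1,\ldots,m_\nu\}$ with $\nu\le a$, set $U=V(M)$, and partition $E(H)$ by trace pattern $I(e)=\{i:e\cap m_i\ne\varnothing\}\subseteq\poi_\nu$; by maximality each $I(e)$ is nonempty. It suffices to bound $\tau(E_I)$ for each pattern $I$ separately and sum. For a fixed $I$, I would greedily build a transversal of $E_I$ by repeatedly picking a most-popular vertex of $\bigcup_{i\in I}m_i$ and deleting the hyperedges it covers. At each step the residual family avoids the chosen vertex, so the trace of each residual hyperedge on the $m_i$'s strictly shrinks. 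When the greedy step can no longer cheaply reduce the residual, one tries to extract a new member of a $\lambda$-witness from the ``private'' intersections produced by the construction, and applies induction with $a'$ decreased by one to the residual; if instead the greedy procedure ever produces a pairwise-disjoint subfamily inside some $m_i$ that can be composed with $M\setminus\{m_i\}$ into a larger matching, the maximality of $M$ is contradicted, and one applies induction with $a$ decreased by one.

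The main obstacle is the quantitative accounting that produces the factor $\binom{a+a'}{a'}^2$ rather than the $2^a$ one would get from summing covers naively over all $2^\nu-1$ patterns $I$. The binomial form strongly suggests that the correct recursion interleaves steps decrementing $a$ with steps decrementing $a'$, so that the recursion tree has depth $a+a'$ and branching controlled by which parameter shrinks at each step; the count $\binom{a+a'}{a'}$ is the number of interleavings, and the square arises because the recursion must be run once to cover within a single pattern class and once to glue the covers across classes. The delicate part will be organising the greedy procedure so that each failure to shrink the residual cheaply certifies, in an amortized sense, either a new matching edge or a new witness pair, with no double counting — and then calibrating the constants $11$ and $(a+a'+3)$ from the arithmetic of the resulting recursion. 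This amortized charging, rather than any single structural lemma, is where the main difficulty lies, and the $11$ in particular is a giveaway that one or more auxiliary absorption arguments (e.g., separating ``heavy'' and ``light'' vertices relative to the residual family) will be needed to absorb boundary terms that appear in each inductive call.
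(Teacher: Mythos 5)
This statement is not proved in the paper at all: it is Theorem~\ref{thm:hglemma} of Ding, Seymour and Winkler, imported as a black box from \cite{hglemma} (the text says only ``The following is proved in \cite{hglemma}'') and then \emph{applied} in Lemma~\ref{lem:mixedtozigzag} and Theorem~\ref{thm:linear}. So there is no in-paper proof to compare yours against; the only question is whether your argument stands on its own.

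It does not, and you essentially say so yourself. Two of the load-bearing steps are announced rather than carried out. First, the case $a=1$ (an intersecting hypergraph with $\lambda(H)\le a'$), which you correctly identify as ``where the bulk of the combinatorial content lives,'' is left entirely open; this case is the whole theorem in miniature, since intersecting families can have arbitrarily large $\tau$ (projective planes, for instance), so one must genuinely extract a cover from the hypothesis $\lambda(H)\le a'$, and no mechanism for doing so is given. Second, the inductive step's accounting --- the ``amortized charging'' that is supposed to certify, at each greedy failure, either a new matching edge or a new $\lambda$-witness pair without double counting, and to produce the factor $\binom{a+a'}{a'}^2$ --- is described as ``the delicate part'' and ``where the main difficulty lies,'' which is an acknowledgement that it has not been done. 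Reverse-engineering the shape of the recursion from the form of the final bound (depth $a+a'$, branching by which parameter decreases, ``the $11$ is a giveaway that absorption arguments will be needed'') is plausible-sounding numerology, not a proof: nothing in the proposal shows that a failed greedy step actually yields a vertex $v_{\{e,e'\}}$ lying in exactly two members of the candidate witness family $F$, which is what the definition of $\lambda$ requires. As written, the proposal is a research plan with its two hardest components explicitly deferred.
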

Theorem \ref{thm:hglemma} is often useful for finding certain induced subgraphs. In our setting, hypergraphs $H$ with $\lambda(H)$ large will translate to induced subdivisions of large complete graphs. 

We now turn to the proof of the main result in this section:

\begin{proof}[Proof of Lemma~\ref{lem:mixedtozigzag}] Let $l,l',s',t\in \poi$ (so all the variables in the statement except for $s$ and $\sigma$) be fixed. First, we define two sequences
$$\{a_{s,\sigma}:s\in \poi, \sigma\in \poi\cup \{0\}\}$$ and 
$$\{b_{s,\sigma}:s\in \poi, \sigma\in \poi\cup \{0\}\}$$
of positive integers recursively, as follows. For every $\sigma\in \poi\cup \{0\}$, let 
$$a_{1,\sigma}=1; \quad b_{1,\sigma}=l.$$
For every $s\in \poi$, let
$$a_{s,0}=s'; \quad b_{s,0}=l'.$$
For all $s\geq 2$ and $\sigma\geq 1$, assuming $a_{s-1,\sigma}, a_{s,\sigma-1}, b_{s-1,\sigma}$ and $b_{s,\sigma-1}$ are all defined, let
$$a_{s,\sigma}=11 a^2_{s-1,\sigma}(a_{s-1,\sigma}+2t+4)\binom{a_{s-1,\sigma}+2t+1}{2t+1}^2a_{s,\sigma-1}$$
and let
$$b_{s,\sigma}=\binom{a_{s,\sigma}}{a_{s-1,\sigma}}b_{s-1,\sigma}+\binom{a_{s,\sigma}}{a_{s,\sigma-1}}b_{s,\sigma-1}.$$
This concludes the definition of the two sequences. Back to the proof of \ref{lem:mixedtozigzag}, we will prove by induction on $s+\sigma$ that
$$f_{\ref{lem:mixedtozigzag}}=f_{\ref{lem:mixedtozigzag}}(l,l',s,s',t, \sigma)=a_{s,\sigma}+1$$
and
$$g_{\ref{lem:mixedtozigzag}}=g_{\ref{lem:mixedtozigzag}}(l,l',s,s',t, \sigma)=b_{s,\sigma}$$

satisfy the lemma.
\medskip

Let $q\in \poi$ and let $q^-,q^+\in \poi\cup \{0\}$ such that $q^-+q^+=\sigma$. Let $\mf{b}$ be an ample and $(q^-,q,q^+)$-mixed $(f_{\ref{lem:mixedtozigzag}},g_{\ref{lem:mixedtozigzag}})$-constellation. Assume that $s=1$. From the choice of $a_{1,\sigma}$ and $b_{1,\sigma}$, it follows that  $\mf{b}$ is a $(1,l)$-constellation, and so $\mf{b}$ is interrupted, implying that \ref{lem:mixedtozigzag}\ref{lem:gettingmixed_b} holds. Next, assume that $\sigma=0$. Then we have $q^-=q^+=0$. From the choice of $a_{s,0}$ and $b_{s,0}$, it follows that $\mf{b}$ is an $(s',l')$-constellation which is $(0,q,0)$-mixed, and so $\mf{b}$ is $q$-zigzagged, implying that \ref{lem:mixedtozigzag}\ref{lem:gettingmixed_b} holds.
\medskip

We may assume from now on that $s\geq 2$ and $\sigma\geq 1$. Let $\pre$ be the linear order on $S_{\mf{b}}$ with respect to which $\mf{b}$ is $(q^-,q,q^+)$-mixed. Since $q^-+q^+=\sigma\geq 1$, it follows that either $q^-\geq 1$ or $q^+\geq 1$. Up to the reversal of $\pre$, we may assume that $q^+\geq 1$. Let $v$ be the unique vertex in $S_{\mf{b}}$ with $S_{\mf{b}}\setminus \{v\}\prec v$.
\medskip

For $L\in \mca{L}_{\mf{b}}$, we denote by $N_L(v)$ the set of all neighbors of $v$ in $L$, and by $\mca{K}_L$ the set of all components of $L\setminus N_L(v)$.
Observe that $\mca{K}_L$ is a set of pairwise disjoint and anticomplete paths in $\mf{b}$ with $|\mca{K}_L|\geq 2$, and that $v$ is anticomplete to $V(\mca{K}_L)$ in $\mf{b}$. For each $L\in \mca{L}_{\mf{b}}$ and every $x\in S_{\mf{b}}\setminus \{v\}$, let $\mca{K}_{x,L}$ be the set of all paths $K\in \mca{K}_{L}$ such that $x$ has a neighbor in $K$ (note that $N_L(x)\subseteq V(\mca{K}_L)$ because $\mf{b}$ is ample). For each $L\in \mca{L}_{\mf{b}}$, we define the hypergraph $H_L$ with $V(H_L)=\mca{K}_L$ and $E(H_L)=\{\mca{K}_{x,L}: x\in S_{\mf{b}}\setminus \{v\}\}$. 
\medskip

Let
$$\mca{M}=\{L\in \mca{L}_{\mf{b}}:\nu(H_L)\geq a_{s-1,\sigma}\}$$
and let
$$\mca{N}=\{L\in \mca{L}_{\mf{b}}:\nu(H_L)\leq a_{s-1,\sigma}\}.$$
Since $\mca{L}_{\mf{b}}=\mca{M}\cup \mca{N}$, it follows from the choice of $b_{s,\sigma}$ that either 

$$|\mca{M}|\geq \binom{a_{s,\sigma}}{a_{s-1,\sigma}}b_{s-1,\sigma}$$
or
$$|\mca{N}|\geq \binom{a_{s,\sigma}}{a_{s,\sigma-1}}b_{s,\sigma-1}.$$

Assume that the former holds. Then, since $|S_{\mf{b}}\setminus \{v\}|=a_{s,\sigma}$ and from the definition of $\mca{M}$, it follows that there is an $a_{s-1,\sigma}$-subset
$S_1$ of $S_{\mf{b}}\setminus \{v\}$ as well as a $b_{s-1,\sigma}$-subset $\mca{L}_1$ of $\mca{M}\subseteq \mca{L}_{\mf{b}}$ such that for every $L\in \mca{L}_1$, the sets $(\mca{K}_{x,L}:x\in S_1)$ are pairwise disjoint. Let $\mf{b}_1$ be the $(a_{s-1,\sigma},b_{s-1,\sigma})$-constellation $(S_1,\mca{L}_1)$. Then $\mf{b}_1$ sits in $\mf{b}$; in particular, $\mf{b}_1$ is ample and $(q^-,q,q^+)$-mixed, because $\mf{b}$ is. Therefore, by the induction hypothesis applied to $\mf{b}_1$, one of the following holds:
 \begin{enumerate}[\rm (A{1})]
        \item\label{lem:mixedtozigzag_a1} There is an induced subgraph of $\mf{b}_1$ isomorphic to a proper subdivision of $K_{2t+2}$.
        \item\label{lem:mixedtozigzag_b1} There is an interrupted $(s-1,l)$-constellation which sits in $\mf{b}_1$. 
    \item\label{lem:mixedtozigzag_c1} There is a $q$-zigzagged $(s',l')$-constellation which sits in $\mf{b}_1$.
    \end{enumerate}
Since $\mf{b}_1$ sits in $\mf{b}$, it follows that \ref{lem:mixedtozigzag_a1} and \ref{lem:mixedtozigzag_c1} imply \ref{lem:mixedtozigzag}\ref{lem:mixedtozigzag_a} and \ref{lem:mixedtozigzag}\ref{lem:mixedtozigzag_c}, respectively. So we may assume that \ref{lem:mixedtozigzag_b1} holds, that is, there is an interrupted $(s-1,l)$-constellation $\mf{a}$ which sits in $\mf{b}_1$. Let $\pre_1$ be the linear order on $S_{\mf{a}}$ with respect to which $\mf{a}$ is interrupted. Recall that by the choice of $S_1$ and $\mca{L}_1$, the vertex $v$ has a neighbor in every $\mf{b}_1$-route. Since $\mf{a}$ sits in $\mf{b}_1$, it follows that $v$ has a neighbor in every $\mf{a}$-route. Consequently, $(S_{\mf{a}}\cup \{v\},\mca{L}_{\mf{a}})$ is an $(s,l)$-constellation which sits in $\mf{b}$, and extending $\pre_1$ from $S_{\mf{a}}$ to $S_{\mf{a}}\cup \{v\}$ by setting $S_{\mf{a}}\prec_1 \{v\}$, it follows that $(S_{\mf{a}}\cup \{v\},\mca{L}_{\mf{a}})$ is interrupted. But then \ref{lem:mixedtozigzag}\ref{lem:mixedtozigzag_b} holds. This completes the induction in the case
$|\mca{M}|\geq \binom{a_{s,\sigma}}{a_{s-1,\sigma}}b_{s-1,\sigma}$.
\medskip

Henceforth, assume that $|\mca{N}|\geq \binom{a_{s,\sigma}}{a_{s,\sigma-1}}b_{s,\sigma-1}$. As it turns out, we may also assume that $\lambda(H_L)\leq 2t+1$ for every $L\in \mca{N}$. In fact, we have:

\sta{\label{st:boundedlambda} Suppose that $\lambda(H_L)\geq 2t+2$ for some $L\in \mca{L}_{\mf{b}}$. Then \ref{lem:mixedtozigzag}\ref{lem:mixedtozigzag_a} holds.}

By the definition of the hypergraph $H_L$ and the parameter $\lambda$, there is a $(2t+2)$-subset $X$ of $S_{\mf{b}}\setminus \{v\}$ with the following property: for every $2$-subset $\{x,y\}$ of $X$, there exists $K_{\{x,y\}}\in \mca{K}_L$ such that $x$ and $y$ both have neighbors in $K_{\{x,y\}}$ while $X\setminus \{x,y\}$ and $K_{\{x,y\}}$ are anticomplete in $\mf{b}$. In particular, for every $2$-subset $\{x,y\}$ of $X$, there is a $\mf{b}$-route $R_{\{x,y\}}$ from $x$ to $y$ with $R^*\subseteq K_{\{x,y\}}$. But now the subgraph of $\mf{b}$ induced by 
$$\bigcup_{\{x,y\}\in \binom{X}{2}} R_{\{x,y\}}$$
is isomorphic to a proper subdivision of $K_{2t+2}$. This proves \eqref{st:boundedlambda}.
\medskip

From \eqref{st:boundedlambda}, the definition of $\mca{N}$, and Theorem~\ref{thm:hglemma}, it follows that for every $L\in \mca{N}$, we have
$$\tau(H_L)\leq 11 a^2_{s-1,\sigma}(a_{s-1,\sigma}+2t+4)\binom{a_{s-1,\sigma}+2t+1}{2t+1}^2;$$
which, combined with the choice of $a_{s,\sigma}$, yields $$|S_{\mf{b}}\setminus \{v\}|=a_{s,\sigma}\geq \tau(H_L)a_{s,\sigma-1}.$$

In particular, for every $L\in \mca{N}$, there is an $a_{s,\sigma-1}$-subset
$S_{2,L}$ of $S_{\mf{b}}\setminus \{v\}$ as well as a path $K_L\in \mca{K}_{L}$ such that 
$$K_{L}\in \bigcap_{x\in S_{2,L}}\mca{K}_{x,L};$$
that is, every vertex in $S_{2,L}$ has a neighbor in $K_L$. This, along with the assumption that $|\mca{N}|\geq \binom{a_{s,\sigma}}{a_{s,\sigma-1}}b_{s,\sigma-1}$, implies that there is an $a_{s,\sigma-1}$-subset
$S_2$ of $S_{\mf{b}}\setminus \{v\}$ and a $b_{s,\sigma-1}$-subset $\mca{L}_2$ of $\mca{N}\subseteq \mca{L}_{\mf{b}}$ such that for every $L\in \mca{L}_2$, some path $K_L\in \mca{K}_{L}$ satisfies
$$K_{L}\in \bigcap_{x\in S_{2}}\mca{K}_{x,L}.$$
Again, this means in words that for every $L\in \mca{L}$, there is a path $K_L\in \mca{K}_{L}$ such that every vertex in $S_2$ has a neighbor in $K_L$. We deduce that $\mf{b}_2=(S_2,\{K_L:L\in \mca{L}_2\})$ is a $(a_{s,\sigma-1},b_{s,\sigma-1})$-constellation. 

Now, observe that $\mf{b}_2$ sits in $\mf{b}$, and so $\mf{b}_2$ is ample because $\mf{b}$ is. Moreover, recall that $\mf{b}$ is $(q^-,q,q^+)$-mixed with respect to the linear order $\pre$ on $S_{\mf{b}}$, where $q^+\geq 1$. Since $S_2\subseteq S_{\mf{b}}\setminus \{v\}\prec v$ and since $v$ is anticomplete to $V(\mca{L}_2)$, it follows that $\mf{b}_2$ is $(q^-,q,q^+-1)$-mixed with respect to the restriction of $\pre$ to $S_{2}$. Therefore, since $q^-+(q^+-1)=\sigma-1$, it follows from the induction hypothesis applied to the constellation $\mf{b}_2$ that one of the following holds.
 \begin{enumerate}[\rm (A{2})]
        \item\label{lem:mixedtozigzag_a2} There is an induced subgraph of $\mf{b}_2$ isomorphic to a proper subdivision of $K_{2t+2}$.
        \item\label{lem:mixedtozigzag_b2} There is an interrupted $(s,l)$-constellation which sits in $\mf{b}_1$. 
    \item\label{lem:mixedtozigzag_c2} There is a $q$-zigzagged $(s',l')$-constellation which sits in $\mf{b}_2$.
    \end{enumerate}
Since $\mf{b}_2$ sits in $\mf{b}$, it follows that \ref{lem:mixedtozigzag_a2} implies \ref{lem:mixedtozigzag}\ref{lem:mixedtozigzag_a}, \ref{lem:mixedtozigzag_b2} implies \ref{lem:mixedtozigzag}\ref{lem:mixedtozigzag_b} and \ref{lem:mixedtozigzag_c2} implies \ref{lem:mixedtozigzag}\ref{lem:mixedtozigzag_c}. This completes the induction in the case
$|\mca{N}|\geq \binom{a_{s,\sigma}}{a_{s,\sigma-1}}b_{s,\sigma-1}$, hence completing the proof of Lemma~\ref{lem:mixedtozigzag}.
\end{proof}

\section{Finishing the proof of Theorem~\ref{thm:mainconstellation}} \label{sec:finale1}
Finally, let us put everything together and prove our first main result, which we restate: 

\mainconstellation*
\begin{proof}
    Let
    $$f=f_{\ref{lem:mixedtozigzag}}(l,l',s,s',t, 4t)$$ 
    and let
    $$g=g_{\ref{lem:mixedtozigzag}}(l,l',s,s',t, 4t).$$
We claim that 
$$f_{\ref{thm:mainconstellation}}=f_{\ref{thm:mainconstellation}}(d,l,l',r,s,s',t)=f_{\ref{lem:gettingmixed}}(d,g,r,f,t)$$
and $$g_{\ref{thm:mainconstellation}}=g_{\ref{thm:mainconstellation}}(d,l,l',r,s,s',t)=g_{\ref{lem:gettingmixed}}(d,g,r,f,t)$$
satisfy the theorem.

Let $\mf{c}$ be a $(f_{\ref{thm:mainconstellation}},g_{\ref{thm:mainconstellation}})$-constellation. By the choice of $f_{\ref{thm:mainconstellation}}$ and $g_{\ref{thm:mainconstellation}}$, we can apply Lemma~\ref{lem:gettingmixed} to $\mf{c}$ and deduce that either \ref{lem:gettingmixed}\ref{lem:gettingmixed_a} or \ref{lem:gettingmixed}\ref{lem:gettingmixed_b} holds. The former is identical to \ref{thm:mainconstellation}\ref{thm:mainconstellation_a}, so we may assume that the latter outcome holds, that is, there is a $d$-ample and $(2t,2t,2t)$-mixed $(f,g)$-constellation $\mf{b}$ which sits in $\mf{c}$.

In particular, $\mf{b}$ is ample. Therefore, by the choices of $f$ and $g$, we can apply Lemma~\ref{lem:mixedtozigzag} to $\mf{b}$. Again, note that  \ref{lem:mixedtozigzag}\ref{lem:mixedtozigzag_a} is identical to \ref{thm:mainconstellation}\ref{thm:mainconstellation_a}. Also, since $\mf{b}$ is $d$-ample, it follows that the constellations given by both \ref{lem:mixedtozigzag}\ref{lem:mixedtozigzag_b} and \ref{lem:mixedtozigzag}\ref{lem:mixedtozigzag_c} are also be $d$-ample. Hence, \ref{lem:mixedtozigzag}\ref{lem:mixedtozigzag_b} implies  \ref{thm:mainconstellation}\ref{thm:mainconstellation_b} and \ref{lem:mixedtozigzag}\ref{lem:mixedtozigzag_c} implies  \ref{thm:mainconstellation}\ref{thm:mainconstellation_c}. This completes the proof of Theorem~\ref{thm:mainconstellation}.
\end{proof}

\section{Obtaining a constellation} \label{sec:bip}

In this section, we prove Theorem \ref{thm:main_ind_minor}. This, together with Theorem \ref{thm:mainconstellation}, implies Theorem \ref{thm:motherKtt}. For ease of notation, we define the following. Let $G$ be a graph and let $s, t \in \mathbb{N}$. By an \emph{induced $(s,t)$-model in $G$} we mean an $(s+t)$-tuple $M=(A_1,\ldots, A_s; B_1,\ldots, B_t)$ of pairwise disjoint connected induced subgraphs of $G$ such that:
\begin{itemize}
    \item $A_1, \dots, A_s$ are pairwise anticomplete in $G$; 
    \item $B_1, \dots, B_t$ are pairwise anticomplete in $G$; and
    \item For all $i \in \poi_s$ and $j \in \poi_t$, the sets $A_i$ and $B_j$ are not anticomplete in $G$. 
\end{itemize}
 It is readily observed that a graph $G$ has an induced minor isomorphic to $K_{s,t}$ if and only if there is an induced $(s,t)$-model in $G$.

Let $G$ be a graph and let $M=(A_1,\ldots, A_s; B_1,\ldots, B_t)$ be an induced $(s,t)$-model in $G$. We call the sets $A_1, \dots, A_s, B_1, \dots, B_t$ the \emph{branch sets} of $M$. We also define $$A(M)=\bigcup_{i=1}^sA_i$$ and $$B(M)=\bigcup_{j=1}^tB_j.$$
We say that $M$ is \emph{$A$-linear} if $A_i$ is a path in $G$ for every $i\in \poi_{s}$. Similarly, we say that $M$ is  \emph{$B$-linear} if $B_j$ is a path in $G$ for every $j\in \poi_{t}$. We say $M$ is \emph{linear} if it is both $A$-linear and $B$-linear. 

The proof of Theorem \ref{thm:main_ind_minor} has three steps: First, in Theorems~\ref{thm:linear} and \ref{thm:2linear}, we reduce the problem to the linear case. Next, in Lemma \ref{lem:getample2}, we further simplify the setup using Lemma \ref{lem:getample}. Finally, in Theorem \ref{thm:mainktt}, we finish the proof; this is only part of the argument in which we exclude $W_{r \times r}$ as an induced minor, rather than a 1-subdivision of a complete graph. 

\begin{theorem} \label{thm:linear}
    \sloppy For all $r, s, t\in \poi$, there are constants $f_{\ref{thm:linear}}=f_{\ref{thm:linear}}(r, s, t)\in \poi$ and $g_{\ref{thm:linear}}=g_{\ref{thm:linear}}(r,s,t)\in \poi$ with the following property. Let $G$ be a graph and let $M=(A_1, \dots, A_{f_{\ref{thm:linear}}}; B_1, \dots, B_{g_{\ref{thm:linear}}})$ be an induced $(f_{\ref{thm:linear}}, g_{\ref{thm:linear}})$-model in $G$. Then one of the following holds.
    \begin{enumerate}[\rm (a)]
        \item\label{thm:linear_a} There is an $A$-linear induced $(s, t)$-model $M'=(A_1', \dots, A_s'; B_1', \dots, B_t')$ in $G$ such that for every $i \in \poi_t$, we have $B_i' \in \{B_1, \dots, B_{g_{\ref{thm:linear}}}\}$.
        \item\label{thm:linear_b} There is an induced minor of $G$ isomorphic to a $1$-subdivision of $K_r$.
    \end{enumerate} 
\end{theorem}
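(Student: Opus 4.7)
The plan is to dichotomize, for each $i\in\poi_f$, how the connections of $A_i$ to the $B_j$'s are distributed: either they can be captured by a single induced path in $A_i$, or $A_i$ has a ``branchy'' structure giving many internally disjoint approaches to distinct $B_j$'s. For each $i,j$ choose $a_{i,j}\in A_i$ with a neighbor in $B_j$; set $S_i=\{a_{i,j}:j\in\poi_g\}$ and let $T_i$ be a minimal connected subgraph of $A_i$ spanning $S_i$, so that $T_i$ is a tree whose leaves lie in $S_i$. Fix a threshold $g^{*}=g^{*}(s,t,r)$ to be chosen. Say $A_i$ is \emph{path-like} if some induced path in $A_i$ is non-anticomplete to at least $g^{*}$ of the $B_j$'s, and \emph{branchy} otherwise.

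First, suppose that, for appropriate $f,g$, the number of path-like indices is at least $N$, where $N$ is large enough for the Ramsey step below. For each such $i$ we have an induced path $P_i\subseteq A_i$ and a set $J_i\subseteq\poi_g$ with $|J_i|\geq g^{*}$ such that $P_i$ is non-anticomplete to $B_j$ for every $j\in J_i$. A standard pigeonhole on the $t$-subsets of $\poi_g$ contained among the $J_i$'s extracts $s$ indices $i_1,\dots,i_s$ with a common $t$-subset $J^{*}\subseteq\bigcap_k J_{i_k}$. Then $(P_{i_1},\dots,P_{i_s};(B_j)_{j\in J^{*}})$ is the desired $A$-linear induced $(s,t)$-model, yielding outcome~\ref{thm:linear_a}.

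In the opposite regime, many indices $i$ are branchy. I would argue that a branchy $A_i$ necessarily hosts a ``hub'' vertex $v_i$ from which many internally disjoint induced paths of $A_i$ go to neighbors of distinct $B_j$'s: since $T_i$ spans all of $S_i$, it cannot be too close to a single path (otherwise one of the at most $2k-3$ maximal segments of a tree with $k$ leaves would contain many $a_{i,j}$'s, and a shortest-path argument inside $A_i$ would give an induced path hitting many $B_j$'s, contradicting branchiness for $k$ small relative to $g^{*}$), so $T_i$ has many leaves and hence a high-degree branch vertex. Choosing $r$ such branchy indices $i_1,\dots,i_r$, I aim to construct an induced $1$-subdivision of $K_r$: the vertex branch sets $V_p$ are small subtrees of $T_{i_p}$ containing $v_{i_p}$ together with $r-1$ selected branches, and the $\binom{r}{2}$ edge branch sets are distinct $B_j$'s, chosen by a Ramsey argument so that for each pair $\{p,q\}$ both $V_p$ and $V_q$ contain a branch ending at a neighbor of a common $B_{j_{pq}}$.

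The hard part will be verifying the induced-subdivision condition, namely that each $V_p$ is anticomplete to $B_{j_{qr}}$ whenever $p\notin\{q,r\}$. This is not automatic, since in the original model every $A_i$ is non-anticomplete to every $B_j$. I expect to handle this via a further Ramsey/pruning step, coordinating the choice of branches in the $T_{i_p}$'s with the assignment of branch-endpoints to $B_j$'s so that the restricted vertex branch sets have the correct neighborhood pattern; this may entail an additional application of Lemma~\ref{lem:getample} on an intermediate structure to amplify connectivity before the final selection. This coordination will be the technical heart of the argument.
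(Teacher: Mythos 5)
Your first half (the ``path-like'' case) is sound and parallels the paper's claim~\eqref{st:bigs}: a pigeonhole over $t$-subsets of indices hit by an induced path yields outcome~\ref{thm:linear}\ref{thm:linear_a}. But the second half contains two genuine gaps. First, the deduction ``branchy $\Rightarrow$ hub'' does not go through as stated: the maximal segments of the Steiner tree $T_i$ are paths of the subgraph $T_i$ but need not be induced paths of $A_i$, and replacing such a segment by a shortest (hence induced) path in $A_i$ between its ends destroys the property of containing the vertices $a_{i,j}$ (a shortest path can avoid the neighborhoods of all the relevant $B_j$'s). So ``not path-like'' does not yield a high-degree branch vertex with usable branches. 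Second, and more seriously, the step you yourself flag as the ``technical heart'' is exactly where the construction breaks: you need each vertex branch set $V_p\subseteq A_{i_p}$ to be anticomplete to all but $r-1$ of the $\binom{r}{2}$ chosen $B_j$'s, whereas in the given model $A_{i_p}$ meets the neighborhood of \emph{every} $B_j$. There is no reason a connected subgraph of $A_{i_p}$ reaching the $r-1$ designated targets while avoiding the other $\binom{r}{2}-(r-1)$ should exist: the neighborhoods of the forbidden $B_j$'s may separate the designated targets inside $A_{i_p}$. No Ramsey or pruning step is exhibited that overcomes this, and Lemma~\ref{lem:getample} (about ample constellations) is not applicable to this selection problem. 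As written, outcome~\ref{thm:linear}\ref{thm:linear_b} is not established.

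The paper sidesteps precisely this difficulty by inverting the roles of the two sides. The vertices of $K_r$ are realized by (a subfamily of) the $B_l$'s, which are pairwise anticomplete for free, and the subdivision vertices are shortest paths $P_{i,j}$, each living in its own $A_k$ (one $k$ reserved per pair $\{i,j\}$), chosen minimally so as to join a neighbor of $B_i$ to a neighbor of $B_j$. A subdivision vertex only has to meet exactly two of the selected $B_l$'s, and the combinatorial selection guaranteeing this is done by the Ding--Seymour--Winkler bound (Theorem~\ref{thm:hglemma}) applied to the hypergraph whose vertices are the pairs $\{i,j\}$ and whose hyperedges record which pairs' paths meet $B_l$: either some path meets at least $t$ of the $B_l$'s (feeding the pigeonhole for outcome~\ref{thm:linear_a}), or $\nu=1$ and bounded $\tau$ force $\lambda\ge r$, which hands you an $r$-set $F$ of indices and, for each pair in $F$, a path meeting exactly that pair among $(B_l:l\in F)$. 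If you want to salvage your architecture, you would need a lemma letting you prune a connected set to avoid prescribed anticomplete targets while retaining others, which is essentially as hard as the theorem itself; I recommend instead adopting the role-inversion and the $\tau$--$\nu$--$\lambda$ argument.
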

\begin{proof}
    Let $$g_{\ref{thm:linear}}=g_{\ref{thm:linear}}(r,s,t) = 11(r+3)r^2t.$$
    Let $q = s{g_{\ref{thm:linear}} \choose t}$ and let $$f_{\ref{thm:linear}}=f_{\ref{thm:linear}}(r,s,t) = q{g_{\ref{thm:linear}} \choose 2}.$$

    Let $G$ be a graph and let $M=(A_1, \dots, A_{f_{\ref{thm:linear}}}; B_1, \dots, B_{g_{\ref{thm:linear}}})$ be an induced $(f_{\ref{thm:linear}}, g_{\ref{thm:linear}})$-model in $G$. Suppose for a contradiction that neither \ref{thm:linear}\ref{thm:linear_a} nor \ref{thm:linear}\ref{thm:linear_b} holds. 
    We first partition $\{1, \dots, f_{\ref{thm:linear}}\}$ into ${g_{\ref{thm:linear}} \choose 2}$ sets $(X_{i, j}: i,j\in \poi_{g_{\ref{thm:linear}}}, i<j)$, each of cardinality $q$. 

    Fix $i, j \in \poi_{g_{\ref{thm:linear}}}$ with $i < j$. For every $k \in X_{i, j}$, we let $P_{i,j,k}$ be a path in $A_k$ with $|P_k|$ minimum such that $P_k$ contains a vertex with a neighbor in $B_i$ and a vertex with a neighbor in $B_j$. Note that such a choice of $P_{i,j,k}$ is possible because $A_k$ is connected and contains both a vertex with a neighbor in $B_i$ and a vertex with a neighbor in $B_j$. For every $k \in X_{i, j}$, let $$S_{i,j,k} = \{l \in \poi_{g_{\ref{thm:linear}}} : P_{i,j,k} \textnormal{ is not anticomplete to } B_l\}.$$ It follows that $\{i, j\} \subseteq S_{i,j,k}$. Moreover, we deduce that:

    \sta{\label{st:bigs} For all $i, j \in \poi_{g_{\ref{thm:linear}}}$ with $i < j$, there exists $k_{i, j} \in X_{i,j}$ such that $|S_{i,j,k_{i,j}}| < t$.}

    For suppose there exist $i, j \in \poi_{g_{\ref{thm:linear}}}$ with $i < j$ such that $|S_k| \geq t$ for all $k \in X_{i, j}$. For each $k \in X_{i, j}$, choose a $t$-subset $T_k$ of $S_k$. From the choice of $q = |X_{i,j}|$, it follows that there is an $s$-subset $Y$ of $X_{i,j}$ such that $T_k = T_{k'}$ for all $k, k' \in Y$. Let $T$ be the set with $T = T_k$ for all $k \in Y$. But now $(P_k:k \in Y ; B_l: l \in T)$ is an $A$-linear induced $(s,t)$-model in $G$ that satisfies \ref{thm:linear}\ref{thm:linear_a}, a contradiction. This proves \eqref{st:bigs}. 

    \medskip

    From now on, for each choice of $i, j \in \poi_{g_{\ref{thm:linear}}}$ with $i<j$, let $k_{i, j} \in X_{i,j}$ be as given by \eqref{st:bigs}, and write $S_{i,j}=S_{i,j,k_{i,j}}$ and $P_{i,j}=P_{i,j,k_{i,j}}$. 
    
    Let $H$ be the hypergraph defined as follows. For each choice of $i, j \in \poi_{g_{\ref{thm:linear}}}$ with $i < j$, the hypergraph $H$ has a vertex $v_{i, j}$, and for every $l \in \poi_{g_{\ref{thm:linear}}}$, the hypergraph $H$ has a hyperedge $e_l = \{v_{i,j} : l \in S_{i, j}\}$ (so $H$ has ${g_{\ref{thm:linear}} \choose 2}$ vertices and $g_{\ref{thm:linear}}$ hyperedges).
    
    We consider the values of $\tau(H), \nu(H)$ and $\lambda(H)$. Since $|S_{i,j}| < t$ for all $i,j\in \poi_{g_{\ref{thm:linear}}}$ with $i<j$, it follows that every vertex of $H$ belongs to fewer than $t$ hyperedges. In particular, we have $\tau(H) >g_{\ref{thm:linear}}/t$, which along with the choice of $g_{\ref{thm:linear}}$ implies that:

    \sta{\label{st:hypertau} We have $\tau(H)>11(r+3)r^2$.}
    
    Also, recall that for all $i,j \in \poi_{g_{\ref{thm:linear}}}$ with $i < j$, we have $i,j\in S_{i,j}$, and so $v_{i, j} \in e_i \cap e_j$. In particular, 

    \sta{\label{st:hypernu} We have $\nu(H)=1$.}

Let us now show that:

    \sta{\label{st:hyper1} We have $\lambda(H) < r$.}
  
  Suppose not; that is, there is an $r$-subset $F$ of $\poi_{g_{\ref{thm:linear}}}$ such that for each choice of $i,j\in F$ with $i<j$, there exist $\alpha_{i,j},\beta_{i,j}\in \poi_{g_{\ref{thm:linear}}}$ with $\alpha_{i,j}<\beta_{i,j}$ such that 
    $$\{l \in F: v_{\alpha_{i,j},\beta_{i,j}}\in e_l\} = \{i, j\}.$$
By the definition of $H$, we obtain that
    $$F\cap S_{\alpha_{i,j},\beta_{i,j}}=\{i,j\}.$$
     For each choice of $i,j\in F$ with $i<j$, it follows from the definition of $S_{\alpha_{i,j},\beta_{i,j}}$ that $B_i$ and $B_j$ are the only sets among $(B_l: l \in F)$ to which the path $P_{\alpha_{i,j},\beta_{i,j}}$ is not anticomplete in $G$. Moreover, recall that the sets $(B_l: l \in F)$ are connected and pairwise anticomplete in $G$, and the sets $(P_{\alpha_{i,j},\beta_{i,j}}: i,j\in F, i<j)$ are connected and pairwise anticomplete in $G$. Therefore, $G$ has an induced minor isomorphic to a $1$-subdivision of $K_r$, where the sets $(B_l: l \in F)$ correspond to the vertices of $K_r$ and sets $(P_{\alpha_{i,j},\beta_{i,j}}: i,j\in F, i<j)$ correspond to the ${r\choose 2}$ vertices obtained by $1$-subdividing each edge of $K_r$. But now \ref{thm:linear}\ref{thm:linear_b} holds, a contradiction. This proves \eqref{st:hyper1}. 

    \medskip
    
    From \eqref{st:hypernu}, \eqref{st:hyper1} and Theorem \ref{thm:hglemma} (plugging in $a = 1$ and $a' = r-1$), it follows that
    $$\tau(H)\leq 11a^2(a+a'+3)\binom{a+a'}{a'}^2 \leq 11(r+3)r^2.$$
    This is a contradiction to \eqref{st:hypertau}, and concludes the proof of Theorem~\ref{thm:linear}. 
\end{proof}

Let $s,t\in \poi$, let $G$ be a graph and let  $M=(A_1,\ldots, A_s; B_1,\ldots, B_t)$ be an induced $(s,t)$-model in $G$. We denote by $M^T$ the induced $(t,s)$-model $(B_1,\ldots, B_t; A_1,\ldots, A_s)$ in $G$. 

Applying Theorem \ref{thm:linear} to both sides of the bipartition, we deduce the following:

\begin{theorem} \label{thm:2linear}
    For all $r, s, t\in \poi$, there are constants $f_{\ref{thm:2linear}}=f_{\ref{thm:2linear}}(r, s, t)\in \poi$ and $g_{\ref{thm:2linear}}=g_{\ref{thm:2linear}}(r, s, t)\in \poi$ with the following property.  Let $G$ be a graph and assume that there is an induced $(f_{\ref{thm:2linear}}, g_{\ref{thm:2linear}})$-model $M$ in $G$. Then one of the following holds.
     \begin{enumerate}[\rm (a)]
        \item\label{thm:2linear_a} There is a linear induced $(s, t)$-model in $G$.
        \item\label{thm:2linear_b} There is an induced minor of $G$ isomorphic to a $1$-subdivision of $K_r$.
    \end{enumerate} 
\end{theorem}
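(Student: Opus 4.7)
The statement is telegraphed by the sentence preceding it: the plan is simply to apply Theorem~\ref{thm:linear} to each side of the bipartition in turn. The first application makes the $A$-side linear while preserving (up to passing to a subfamily) the original $B$-branch sets; transposing and applying Theorem~\ref{thm:linear} again then linearizes what used to be the $B$-side, and since the new $B$-branch sets will be selected among the already-linear branch sets from the first application, the result will be linear on both sides.

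More precisely, I would set constants as follows. Let $s' = g_{\ref{thm:linear}}(r,t,s)$ and $t' = f_{\ref{thm:linear}}(r,t,s)$, and then put
\[
f_{\ref{thm:2linear}}(r,s,t) = f_{\ref{thm:linear}}(r,s',t'), \qquad g_{\ref{thm:2linear}}(r,s,t) = g_{\ref{thm:linear}}(r,s',t').
\]
Given an induced $(f_{\ref{thm:2linear}},g_{\ref{thm:2linear}})$-model $M = (A_1,\ldots,A_{f_{\ref{thm:2linear}}}; B_1,\ldots,B_{g_{\ref{thm:2linear}}})$ in $G$, I apply Theorem~\ref{thm:linear} to $M$ with parameters $(r,s',t')$. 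If outcome \ref{thm:linear}\ref{thm:linear_b} occurs, we are done with \ref{thm:2linear}\ref{thm:2linear_b}. Otherwise, there is an $A$-linear induced $(s',t')$-model $M' = (A_1',\ldots,A_{s'}'; B_1',\ldots,B_{t'}')$ in $G$ such that each $B_j'$ is among the $B_i$'s (this piece of information is not actually needed for step two, but it is guaranteed by the statement).

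Next, I form the transpose $(M')^T = (B_1',\ldots,B_{t'}'; A_1',\ldots,A_{s'}')$, which is an induced $(t',s')$-model in $G$; by the $A$-linearity of $M'$, each $A_i'$ is a path, so $(M')^T$ is $B$-linear. Since $t' = f_{\ref{thm:linear}}(r,t,s)$ and $s' = g_{\ref{thm:linear}}(r,t,s)$, I may apply Theorem~\ref{thm:linear} to $(M')^T$ with parameters $(r,t,s)$. Again, outcome \ref{thm:linear}\ref{thm:linear_b} directly gives \ref{thm:2linear}\ref{thm:2linear_b}; otherwise we obtain an $A$-linear induced $(t,s)$-model $M'' = (A_1'',\ldots,A_t''; B_1'',\ldots,B_s'')$ in $G$ such that each $B_j''$ lies in $\{A_1',\ldots,A_{s'}'\}$. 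Since each $A_i'$ is a path, each $B_j''$ is also a path, so $M''$ is $B$-linear as well, i.e., linear. Transposing back, $(M'')^T$ is a linear induced $(s,t)$-model in $G$, establishing \ref{thm:2linear}\ref{thm:2linear_a}.

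There is essentially no obstacle beyond bookkeeping: the only subtlety is arranging the parameters of the two applications so that the ``pool'' of $B$-branch sets surviving the first application (namely $t' = f_{\ref{thm:linear}}(r,t,s)$ of them) is large enough to serve as the $A$-side of the second application, and symmetrically for the $A$-branch sets. This is why the parameters $(r,s',t')$ in the first application are chosen to match the input requirements $(r,t,s)$ of the second application after transposition.
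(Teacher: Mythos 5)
Your proposal is correct and follows essentially the same route as the paper: one application of Theorem~\ref{thm:linear} to linearize the $A$-side, then a second application to the transpose, using the fact that the new $B$-branch sets are drawn from the already-linear branch sets of the first application. The only (immaterial) difference is that you feed the parameters $(r,t,s)$ into the second application and transpose at the end, whereas the paper feeds in $(r,s,t)$ and obtains the $(s,t)$-model directly.
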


\begin{proof}
    Let $f_{\ref{thm:linear}}=f_{\ref{thm:linear}}(r, s, t)$ and $g_{\ref{thm:linear}}=g_{\ref{thm:linear}}(r,t)$ be as given by Theorem~\ref{thm:linear}. Let
    $$f_{\ref{thm:2linear}} = f_{\ref{thm:2linear}}(r, s, t) = f_{\ref{thm:linear}}(r, g_{\ref{thm:linear}}, f_{\ref{thm:linear}})$$
    and let
    $$g_{\ref{thm:2linear}} = g_{\ref{thm:2linear}}(r, s, t) = g_{\ref{thm:linear}}(r, g_{\ref{thm:linear}}, f_{\ref{thm:linear}}).$$

    We apply Theorem \ref{thm:linear} to the induced $(f_{\ref{thm:2linear}}, g_{\ref{thm:2linear}})$-model $M$ in $G$. Note that \ref{thm:linear}\ref{thm:linear_b} and \ref{thm:2linear}\ref{thm:2linear_b} are identical, and so we may assume that \ref{thm:linear}\ref{thm:linear_a} holds. In particular, there is an $A$-linear induced $(g_{\ref{thm:linear}}, f_{\ref{thm:linear}})$-model $M'=(A'_1, \dots, A'_{g_{\ref{thm:linear}}}; B'_1, \dots, B'_{f_{\ref{thm:linear}}})$ in $G$. We now apply Theorem \ref{thm:linear} to the induced $(f_{\ref{thm:linear}}, g_{\ref{thm:linear}})$-model  $M'^T=(B'_1, \dots, B'_{f_{\ref{thm:linear}}}; A'_1, \dots, A'_{g_{\ref{thm:linear}}})$ in $G$. Again, since \ref{thm:linear}\ref{thm:linear_b} and \ref{thm:2linear}\ref{thm:2linear_b} are identical, we may assume that \ref{thm:linear}\ref{thm:linear_a} holds; that is, there is an $A$-linear induced $(s,t)$-model $M''=(A_1'', \dots, A_s''; B_1'', \dots, B_t')$ in $G$ such that for all $i \in \poi_{t}$, we have $B_i'' \in \{A_1, \dots, A_{g_{\ref{thm:linear}}}\}$. Since $A_1, \dots, A_{g_{\ref{thm:linear}}}$ are paths in $G$ (as $M'$ is $A$-linear), it follows that $M''$ is $B$-linear. Hence,  $M''$ is both $A$-linear and $B$-linear, and so \ref{thm:2linear}\ref{thm:2linear_a} holds, as desired.
\end{proof}

Let $s, t \in \mathbb{N}$ and let $G$ be a graph. Let $M = (A_1, \dots, A_s; B_1, \dots, B_t)$ be a linear induced $(s,t)$-model in $G$. By contracting each set $A_i$ to a vertex $a_i$, we obtain an induced minor of $G$ which is an $(s, t)$-constellation. We call this constellation the \emph{$A$-contraction of $M$}, and denote it by $\mf{c}(M)$. For convenience, we write 
$S_M$ for $S_{\mf{c}(M)}=\{a_i : i \in \poi_s\}$ and $\mca{L}_M$ for  $\mca{L}_{\mf{c}(M)}=\{B_j: j \in \poi_t\}$; so $\mf{c}(M)=(S_M,\mca{L}_M)$. The \emph{$B$-contraction of $M$} is defined as the $A$-contraction $\mf{c}(M^T)$ of $M^T$. Analogous to the notion for constellations, for $d \in \mathbb{N}$, we say that $M$ is \emph{$d$-$A$-ample} if $\mf{c}(M)$ is $d$-ample, and $M$ is \emph{$d$-$B$-ample} if $\mf{c}(M^T)$ is $d$-ample. We say that $M$ is \emph{$d$-ample} if $M$ is both $d$-$A$-ample and $d$-$B$-ample.

\begin{lemma}\label{lem:getample2}
For all $d,l,r,r's\in \poi$, there are constants $f_{\ref{lem:getample2}}=f_{\ref{lem:getample2}}(d,l,r,r',s)\in \poi$ and $g_{\ref{lem:getample2}}=g_{\ref{lem:getample2}}(d,l,r,r',s)\in \poi$ with the following property. Let $G$ be a graph and let $M = (A_1, \dots, A_{f_{\ref{lem:getample2}}}; B_1, \dots, B_{g_{\ref{lem:getample2}}})$ be a linear induced $(f_{\ref{lem:getample2}},g_{\ref{lem:getample2}})$-model in $G$. Then one of the following holds.
 \begin{enumerate}[\rm (a)]
    \item\label{lem:getample2_a} There is an $(r, r)$-constellation in $G$.  
    \item\label{lem:getample2_b} There is an induced minor of $G$ which is isomorphic to a $1$-subdivision of $K_{r'}$.
    \item\label{lem:getample2_c} There exist subsets $X\subseteq \poi_{f_{\ref{lem:getample2}}}$ and $Y\subseteq \poi_{g_{\ref{lem:getample2}}}$ such that $|X|=s$, $|Y|=l$ and the induced $(s,l)$-model $(A_i: i \in X; B_j: j \in Y)$ in $G$ is $d$-ample.   \end{enumerate}
\end{lemma}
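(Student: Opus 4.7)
The plan is to apply Lemma~\ref{lem:getample} twice: first to the $A$-contraction $\mf{c}(M)$ to cut down to a $d$-$A$-ample sub-model $M_1$ of $M$, and then to the $B$-contraction $\mf{c}(M_1^T)$ to further refine $M_1$ to a sub-model $M_2$ that is additionally $d$-$B$-ample. Since $d$-$A$-ampleness is preserved under passing to a sub-model (any route in the sub-constellation is also a route in the larger one), $M_2$ will be $d$-ample, yielding outcome~\ref{lem:getample2}\ref{lem:getample2_c}. The parameters are chosen so that the output of the first application meets the input bound of the second: letting $r^\star = \max\{r, 3\}$, $s_1 = g_{\ref{lem:getample}}(d, s, r^\star, r', l)$, and $l_1 = f_{\ref{lem:getample}}(d, s, r^\star, r', l)$, I set $f_{\ref{lem:getample2}} = f_{\ref{lem:getample}}(d, l_1, r^\star, r', s_1)$ and $g_{\ref{lem:getample2}} = g_{\ref{lem:getample}}(d, l_1, r^\star, r', s_1)$.

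It remains to dispose of outcome~\ref{lem:getample}\ref{lem:getample_a} in either application, which produces an induced subgraph of the relevant constellation isomorphic to either $K_{r^\star, r^\star}$ or a proper subdivision of $K_{r'}$. Since both $\mf{c}(M)$ and $\mf{c}(M_1^T)$ are induced minors of $G$, any proper subdivision of $K_{r'}$ found as an induced subgraph of either is itself an induced minor of $G$; contracting the interior of each subdivision path into a single branch set per edge of $K_{r'}$ then yields a $1$-subdivision of $K_{r'}$ as an induced minor of $G$, giving outcome~\ref{lem:getample2}\ref{lem:getample2_b}.

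The final case is an induced $K_{r^\star, r^\star}$ in $\mf{c}(M)$ or, symmetrically, in $\mf{c}(M_1^T)$; I claim this yields an $(r^\star, r^\star)$-constellation, hence an $(r, r)$-constellation, in $G$. The key structural observation is that in any constellation $\mf{c}$, an induced $K_{r^\star, r^\star}$ with bipartition $U, W$ must, when $r^\star \geq 3$, satisfy $W \subseteq S_{\mf{c}}$ and $U \subseteq V(\mca{L}_{\mf{c}})$ (up to swapping sides): if both sides met $S_{\mf{c}}$, two such vertices would be non-adjacent and contradict completeness between the sides; and if $W$ contained some $w \in V(\mca{L}_{\mf{c}})$, then every vertex of $U$ would have to be adjacent to $w$ within a single path, forcing $|U| \leq 2$. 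Pulling back to $G$, the vertices in $W \subseteq S_{\mf{c}(M)}$ correspond to $r^\star$ of the paths $A_i$, which are pairwise anticomplete in $G$, while $U \subseteq V(\mca{L}_{\mf{c}(M)})$ is a set of $r^\star$ vertices of $G$, each with a neighbor in each of those paths -- precisely an $(r^\star, r^\star)$-constellation, giving outcome~\ref{lem:getample2}\ref{lem:getample2_a}. The cases $r \in \{1, 2\}$ are absorbed by $r^\star = 3$, since every $(3, 3)$-constellation contains an $(r, r)$-constellation as an induced subgraph.

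The main technical obstacle is the careful pull-back of the $K_{r^\star, r^\star}$ outcome: one must check that $U$ remains stable in $G$ (it does, because non-edges within $V(\mca{L}_{\mf{c}(M)})$ lift to non-edges of $G$, as each $B_j$ is an induced path of $G$ and distinct $B_j$'s are anticomplete), that the $r^\star$ chosen $A_i$'s are pairwise anticomplete induced paths in $G$ (they are, since $M$ is linear with pairwise anticomplete branch sets), and that the induced subgraph on $U \cup \bigcup A_i$ has no spurious edges disturbing the constellation structure. Everything else is parameter bookkeeping; the two-step extraction is the standard argument for making an induced model ample on both sides simultaneously.
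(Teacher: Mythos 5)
Your proposal is correct and follows essentially the same structure as the paper's own argument: apply Lemma~\ref{lem:getample} twice, first to the $A$-contraction $\mf{c}(M)$ and then to the $B$-contraction $\mf{c}(M_1^T)$, pull the $K_{r^\star,r^\star}$ outcome back to an $(r^\star,r^\star)$-constellation in $G$ via the observation that one side of an induced $K_{r^\star,r^\star}$ in a constellation must lie in $S_{\mf{c}}$ (for $r^\star\geq 3$), and observe that a proper subdivision of $K_{r'}$ as an induced subgraph of an induced minor of $G$ yields a $1$-subdivision of $K_{r'}$ as an induced minor of $G$. Your parameter bookkeeping ($s_1=g_{\ref{lem:getample}}(d,s,r^\star,r',l)$, $l_1=f_{\ref{lem:getample}}(d,s,r^\star,r',l)$, then $f_{\ref{lem:getample2}}=f_{\ref{lem:getample}}(d,l_1,r^\star,r',s_1)$, $g_{\ref{lem:getample2}}=g_{\ref{lem:getample}}(d,l_1,r^\star,r',s_1)$) is the one that delivers exactly $s$ $A$-parts and $l$ $B$-parts at the end; the paper's written choice feeds $(d,l,r^\star,r',s)$ into the inner call, which appears to produce $l$ $A$'s and $s$ $B$'s — a harmless $s\leftrightarrow l$ transposition, but your version is the one that literally matches the stated conclusion. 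The only stylistic difference is that you isolate the $K_{r^\star,r^\star}$ pull-back as a clean structural fact about constellations, whereas the paper argues it inline; both are the same argument.
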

\begin{proof}
   Let 
   $$f_{\ref{lem:getample}}=f_{\ref{lem:getample}}(d,l,\max\{r,3\},r',s)$$
   and
   $$g_{\ref{lem:getample}}=g_{\ref{lem:getample}}(d,l,\max\{r,3\},r',s)$$
   be as given by Lemma~\ref{lem:getample}.
   Let
   $$f_{\ref{lem:getample2}}=f_{\ref{lem:getample2}}(d,l,r,r',s)=f_{\ref{lem:getample}}(d,f_{\ref{lem:getample}},\max\{r,3\},r',g_{\ref{lem:getample}}).$$
and
$$g_{\ref{lem:getample2}}=g_{\ref{lem:getample2}}(d,l,r,r',s)=g_{\ref{lem:getample}}(d,f_{\ref{lem:getample}},\max\{r,3\},r',g_{\ref{lem:getample}})$$
 Let $G$ be a graph and let $M = (A_1, \dots, A_{f_{\ref{lem:getample2}}}; B_1, \dots, B_{g_{\ref{lem:getample2}}})$ be a linear induced $(f_{\ref{lem:getample2}},g_{\ref{lem:getample2}})$-model in $G$. Suppose that \ref{lem:getample2}\ref{lem:getample2_a} and \ref{lem:getample2}\ref{lem:getample2_b} do not hold. We show that:

\sta{\label{st:ktt} Neither $\mf{c}(M)$ nor $\mf{c}(M^T)$ has an induced subgraph isomorphic to $K_{\max\{r,3\}, \max\{r,3\}}$.}

Suppose not. By symmetry, we may assume $\mf{c}(M)$ has an induced subgraph isomorphic to $K_{\max\{r,3\}, \max\{r,3\}}$. Then there are disjoint stable sets $Q, R \subseteq V(\mf{c}(M))$ in $\mf{c}(M)$ such that $|Q| = |R| = \max\{r,3\}$, and every vertex in $Q$ is adjacent to every vertex in $R$ in $\mf{c}(M)$. Since $S_M$ is a stable set in $\mf{c}(M)$, it follows that either $Q$ or $R$ is disjoint from $S_M$; without loss of generality, assume that $R \cap S_M = \varnothing$. Thus, we have $R\subseteq B(M)$; therefore, $R\subseteq B(M)$ is a stable set of cardinality $\max\{r,3\}$ in $G$.

Next, we claim that $Q \subseteq S_M$. For suppose there is a vertex $q \in Q \setminus S_M\subseteq \mf{c}(M) \setminus S_M$. Then, since $R\subseteq \mf{c}(M) \setminus S_M$ and since $q$ is complete to $R$ in $\mf{c}(M)$, it follows that $q$ is a vertex of degree at least $|R|=\max\{r,3\}\geq 3$ in the graph $\mf{c}(M) \setminus S_M$. However, each component of the graph $\mf{c}(M) \setminus S_M$ is a path and so $\mf{c}(M) \setminus S_M$ contains no vertex of degree greater than 2, a contradiction. The claim follows.

Now, for each $i\in \poi_{f_{\ref{lem:getample2}}}$, let $A_i$ be the vertex of $\mf{c}(M)$ obtained from contracting $A_i$. Let $I = \{i\in \poi_{f_{\ref{lem:getample2}}}: a_i \in Q\}$. Then, by the above claim, we have $|I|=|Q|=\max\{r,3\}$. Moreover, for every $i\in I$, every vertex in $R$ has a neighbour in the path $A_i$. But now $(R, \{A_i : i \in I\})$ is a $(\max\{r,3\}, \max\{r,3\})$-constellation in $G$, and so \ref{lem:getample2}\ref{lem:getample2_a} holds, a contradiction. This proves \eqref{st:ktt}. 

\sta{\label{st:ktt2} Neither $\mf{c}(M)$ nor $\mf{c}(M^T)$ has an induced subgraph isomorphic to a proper subdivision of $K_{r'}$.}

Suppose not. By symmetry, we may assume that $\mf{c}(M)$ has an induced subgraph isomorphic to a proper subdivision of $K_{r'}$. Since $\mf{c}(M)$ is an induced minor of $G$, it follows that $G$ has an induced minor isomorphic to a proper subdivision of $K_{r'}$, which in turn implies that $G$ has an induced minor isomorphic to a $1$-subdivision of $K_{r'}$. But then \ref{lem:getample2}\ref{lem:getample2_b} holds, a contradiction. This proves \eqref{st:ktt2}. 
\medskip

We apply Lemma \ref{lem:getample} to the constellation $\mf{c}(M) = (S_M, \mca{L}_M)$. From \eqref{st:ktt} and \eqref{st:ktt2}, it follows that the outcome \ref{lem:getample}\ref{lem:getample_a} does not hold. Therefore, \ref{lem:getample}\ref{lem:getample_b} holds; that is, there are subsets $X'\subseteq \poi_{f_{\ref{lem:getample2}}}$ and $Y' \subseteq \poi_{g_{\ref{lem:getample2}}}$ such that: 
\begin{itemize}
    \item $|X'| = g_{\ref{lem:getample}}(d, l, \max\{r,3\}, r', s)$; 
    \item $|Y'| = f_{\ref{lem:getample}}(d, l, \max\{r,3\}, r', s)$; and
    \item $M' = (A_i: i \in X'; B_j:j \in Y')$ is $d$-$A$-ample. 
\end{itemize}
In particular, the third bullet above implies that $M'^T$ is $d$-$B$-ample. We again apply Lemma~\ref{lem:getample}, this time to $\mf{c}(M'^T)$. Since $\mf{c}(M'^T)$ is isomorphic to an induced subgraph of $\mf{c}(M^T)$, it follows from \eqref{st:ktt} and \eqref{st:ktt2} that the outcome \ref{lem:getample}\ref{lem:getample_a} does not hold. So \ref{lem:getample}\ref{lem:getample_b} holds; that is, there are subsets $X \subseteq X'$ and $Y \subseteq Y'$ such that: 
\begin{itemize}
    \item $|X| = l$ and $|Y| = s$; and
    \item $M'' = (B_j: j \in Y; A_i:i \in X)$ is $d$-$A$-ample. 
\end{itemize}
Furthermore, since $M'^T$ is $d$-$B$-ample, so is $M''$. Hence, $M''$ is $d$-ample. But now $M''^T$ satisfies the outcome \ref{lem:getample2}\ref{lem:getample2_c}. This completes the proof of Lemma~\ref{lem:getample2}.
\end{proof}

\begin{theorem} \label{thm:mainktt}
    For every $r \in \poi$, there is a constant $f_{\ref{thm:mainktt}}=f_{\ref{thm:mainktt}}(r)\in \poi$ with the following property. Let $G$ be a graph and assume that there is a $2$-ample linear induced $(f_{\ref{thm:mainktt}},f_{\ref{thm:mainktt}})$-model in $G$. Then $G$ has an induced minor isomorphic to $W_{r \times r}$. 
\end{theorem}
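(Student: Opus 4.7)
The plan is to first pass to a highly regular sub-model via Ramsey-type arguments, and then explicitly carve out branch sets for $W_{r\times r}$ using $2$-ampleness. For each $(i,j)$ I would fix an edge $x_{i,j}y_{i,j}$ with $x_{i,j}\in B_j$ and $y_{i,j}\in A_i$, which exists since $A_i$ and $B_j$ are not anticomplete. Each row $B_j$ (respectively each column $A_i$) induces a permutation of the column (row) indices via the order in which these chosen vertices appear along it; by iterated Erd\H{o}s--Szekeres or a direct Ramsey on pairs, I would pass to a sub-model of size $n=n(r)$ in which, after relabeling, $x_{1,j},\dots,x_{n,j}$ appear in this order along every $B_j$ and $y_{i,1},\dots,y_{i,n}$ appear in this order along every $A_i$.

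Next, I would apply a further multi-layered Ramsey argument to secure the \emph{disjoint-range} property: for every $j$, the convex hulls in $B_j$ of the sets $N_{i,j}=\{v\in V(B_j): v\text{ has a neighbor in }A_i\}$ are pairwise disjoint and appear in the order $1,\dots,n$ along $B_j$, and symmetrically for the analogous sets in each $A_i$. The approach is to classify each row by the pairwise interleaving type on column pairs, stabilize this classification across rows by Ramsey, and then apply Ramsey on the resulting ``interleaving graph'' on columns (together with an independent-set extraction) to pass to a subfamily of columns in which no pair interleaves; the rows are then refined analogously. Combined with $2$-ampleness, this produces, between consecutive ranges along each path $B_j$ or $A_i$, a clean gap of at least two vertices containing no column/row-neighbors at all.

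Finally, identifying $V(W_{r\times r})$ with an $r\times r$ grid pattern whose vertical adjacencies follow a parity-based ``brick'' rule, I would define each branch set $U_{(u,v)}$ as the union of a sub-path of $B_u$ containing the range of $N_{v,u}$ together with halves of the two adjacent horizontal gaps, and a sub-path of $A_v$ containing the range of the analogous row-neighbor set, which extends into its vertical gap only toward the unique wall-neighbor of $(u,v)$ in the same column (as determined by the parity of $u+v$); the two pieces are joined via the edge $x_{v,u}y_{v,u}$. Horizontal wall edges $(u,v)\sim(u,v+1)$ are realized by adjacency inside $B_u$ and the permitted vertical edges by adjacency inside $A_v$. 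For non-adjacencies, the disjoint-range property ensures that horizontal pieces contain no neighbor of any $A_{v'}$ with $v'\neq v$, eliminating all cross-edges; for column-adjacent pairs that are not wall-adjacent, the two vertical pieces point away from each other, so $2$-ampleness forces separation by at least $3$ in $A_v$ and hence anticompleteness.

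The main obstacle I anticipate is the second step: while the first-vertex orderings along $B_j$ and $A_i$ can be Ramseyed readily, interleavings of the sets $N_{i,j}$ are not prevented by $2$-ampleness alone, so eliminating them requires an iterated Ramsey argument that first stabilizes pairwise interleaving types across rows and then extracts a non-interleaving subfamily of columns (and symmetrically for rows); once this is in place, the wall construction itself reduces to a careful but routine case analysis of adjacencies.
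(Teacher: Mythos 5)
Your approach---Ramsey to consistently order the representative edges $x_{i,j}y_{i,j}$, a second Ramsey layer to force disjoint convex ranges for the neighbor sets $N_{i,j}$, and then an explicit carving of branch sets---is genuinely different from the paper's proof, which imposes no ordering and no disjoint-range normal form at all. The paper takes $G_M=G[A(M)\cup B(M)]$, lets $Z$ be the vertices with no neighbor across the bipartition, and contracts each component $K$ of $G_M\setminus Z$ to a vertex $v_K$. The point of $2$-ampleness there is Step~\eqref{st:anti}: every such $K$ meets exactly one $A_i$ and one $B_j$, so in the resulting induced minor $G'$ the vertices $v_K$ form a stable set and all remaining vertices (those in $Z$) have degree at most $2$. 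One then checks $G'$ has a $K_{f,f}$ minor (via a small vertex-splitting trick), applies Theorem~\ref{thm:wallminor} to get a subdivided wall as a \emph{subgraph} of $G'$, and observes it must be an \emph{induced} subgraph because every wall-vertex has degree $\geq 2$ in the wall, hence an extra chord would force two adjacent vertices of degree $\geq 3$ in $G'$, which is impossible. No structural reduction of the model is needed; the grid minor theorem does all the heavy lifting.

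The concrete gap in your proposal is exactly where you flag uncertainty. $2$-ampleness only enforces pairwise distance at least $3$ between $N_{i,j}$ and $N_{i',j}$ along $B_j$; it does nothing to prevent their convex hulls from nesting. Consider the configuration where in every $B_j$ the convex hulls of $N_{1,j}\supset N_{2,j}\supset\cdots$ are pairwise nested (e.g.\ $N_{i,j}$ occupies positions $10i$ and $L-10i$ on a path of length $L$). This is perfectly consistent with $2$-ampleness, and after the Ramsey step that stabilizes pairwise interleaving types across rows you are left with an ``interleaving graph'' on columns that is \emph{complete}. There is then no non-interleaving subfamily of columns to extract, so the disjoint-range normal form you need for Step~3 cannot be obtained by the independent-set extraction you describe. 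One could try to salvage the approach by also handling the ``nested clique'' outcome of Ramsey with a separate construction, but that is a different argument that you have not given, and without it the explicit wall construction in your last paragraph has nothing to stand on. This is precisely the case the paper's contraction-plus-grid-minor argument handles automatically.
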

\begin{proof}
    Let $f_{\ref{thm:mainktt}}=f_{\ref{thm:mainktt}}(r) = 2 f_{\ref{thm:wallminor}}(r) + 1$.

    Let $M = (A_1, \dots, A_{f_{\ref{thm:mainktt}}}; B_1, \dots, B_{f_{\ref{thm:mainktt}}})$ be a $2$-ample linear induced $(f_{\ref{thm:mainktt}},f_{\ref{thm:mainktt}})$-model in $G$. Let $G_M=G[A(M)\cup B(M)]$. For every subset $X\subseteq A(M)\cup B(M)$, we define
    $$I_X = \{i \in \poi_{f_{\ref{thm:mainktt}}} : X \cap A_i \neq \varnothing\}$$
    and 
    $$J_X = \{j \in \poi_{f_{\ref{thm:mainktt}}} : X \cap B_j \neq \varnothing\}.$$

Let $F$ be the set of all edges of $G_M$ with an end in $A(M)$ and an end in $B(M)$. Let $Z$ be the set of all vertices in $G_M$ that are not incident with any edge in $F$. Since $M$ is linear, it follows that every vertex in $Z$ has degree at most two in $G_M$.

Let $H$ be the spanning subgraph of $G_M$ with edge set $F$. It follows that $H$ is bipartite with bipartition $(A(M), B(M))$, and that $Z$ is the set of all isolated vertices of $H$. Let $\mca{K}$ be the set of all components of $G_M\setminus Z$. Since every vertex in $(A(M)\cup B(M))\setminus Z$ is incident with an edge in $F$, it follows that for every $K\in \mca{K}$, we have $|I_K|,|J_K|\geq 1$. But we can indeed prove the following: 

\sta{\label{st:anti} For every $K\in \mca{K}$, we have $|I_K|=|J_K|=1$.}

Suppose not. Let $P=p_1\dd \cdots\dd p_k$ be a shortest path in $K$ such that $|I_P| > 1$ or $|J_P| > 1$. By symmetry, we may assume that $|I_P| > 1$. By the minimality of $P$, there exist distinct $i, i' \in I_P$ and $j \in J_P$ such that $p_1 \in A_i$ and $p_k \in A_{i'}$, and $p_2, \dots, p_{k-1} \in B_j$. Since $M$ is 2-ample, it follows that $k \geq 5$, and furthermore, $I(N(p_3)) \subseteq \{i\}$. If $p_3$ has a neighbor $p_1' \in A_i$, then $p_1' \in K$ and $p_1' \dd p_3 \dd \cdots \dd p_k$ contradicts the minimality of $P$. It follows that $p_3$ has no neighbors in $A(M)$, and hence $p_3 \in Z$, contrary to the assumption that $p_3 \in K$. This proves \eqref{st:anti}. 
\medskip
  
Now, let $G'$ be the graph obtained from $G_M$ by contracting each set $K\in \mca{K}$ to a vertex $v_K$. Then $G'$ is an induced minor of $G_M$ (and so of $G$) with $V(G')=\{v_K : K\in \mca{K}\}\cup Z$. It also follows that:

\sta{\label{st:deg3} No two vertices of degree at least three in $G'$ are adjacent.}

By construction $\{v_K : K \in \mca{K}\}$ is a stable set in $G'$, and the vertices in $Z$ are of degree at most two in $G'$ because they are of degree at most two in $G_M$. This proves \eqref{st:deg3}.
\medskip

Furthermore, we deduce that $G'$ has large treewidth:

\sta{\label{st:treewidth} $G'$ has treewidth at least $(f_{\ref{thm:mainktt}}-1)/2 = f_{\ref{thm:wallminor}}(r)$.}

    Let $G''$ arise from $G'$ by replacing each vertex $v_K$ for $K\in \mca{K}$ with two adjacent copies $v^A_K$ and $v^B_K$. It is straightforward to observe that $\tw(G'') \leq 2 \tw(G')+1$.
    Therefore, it suffices to show that $\tw(G'') \geq f_{\ref{thm:mainktt}}$. To that end, for every $i \in \poi_{f_{\ref{thm:mainktt}}}$, let $$A_i' = \{v_K^A: K\in \mca{K}, i\in I_K\}\cup (A_i\cap Z)$$
    and let
    $$B_i' = \{v_K^B: K\in\mca{K}, i\in J_K\}\cup (B_i\cap Z).$$
    It follows that:
    \begin{itemize}
        \item The sets $(A_i': i \in \poi_{f_{\ref{thm:mainktt}}})$ are connected. This is because $A_i$ is connected and $A_i'$ arose from $A_i$ by identifying the vertices in $A_i\cap K$ for each $K \in \mca{K}$. Likewise, the sets $(B_i': i \in \poi_{f_{\ref{thm:mainktt}}})$ are connected. 
        \item The sets $(A_i':i \in \poi_{f_{\ref{thm:mainktt}}})$ are pairwise disjoint. This is due to \eqref{st:anti} and the fact that the sets $(A_i:i \in \poi_{f_{\ref{thm:mainktt}}})$ are pairwise disjoint. Likewise, the sets $(B_i':i \in \poi_{f_{\ref{thm:mainktt}}})$ are pairwise disjoint. 
    
    \item For all $i, j \in \poi_{f_{\ref{thm:mainktt}}}$, the sets $A_i'$ and $B_j'$ are disjoint (by construction) and not anticomplete in $G$. The latter holds because $G$ has an edge $ab\in F$ with $a \in A_i$ and $b \in B_i$ (as $M$ is a induced $(f_{\ref{thm:mainktt}},f_{\ref{thm:mainktt}})$-model), and so there is a component $K\in \mca{K}$ with $a,b\in K$. But now $i\in I_K$ and $j\in J_K$, and $v_K^A \in A_i'$ and $v_K^B \in B_j'$ are adjacent.
    \end{itemize}
    
    From the above three bullets, we conclude that $G''$ has a minor isomorphic to $K_{f_{\ref{thm:mainktt}}, f_{\ref{thm:mainktt}}}$; thus $\tw(G'') \geq f_{\ref{thm:mainktt}}$. This proves \eqref{st:treewidth}. 

    \medskip

    From \eqref{st:treewidth} and Theorem \ref{thm:wallminor}, it follows that $G'$ has a subgraph isomorphic to a subdivision of $W_{r \times r}$. Let us denote this subgraph by $W$. We claim that:
    
    \sta{\label{st:Wisinduced}  $W$ is an induced subgraph of $G'$.}
    
   Suppose not; let $xy \in E(G')$ with $x, y \in W$ such that $x$ and $y$ are not adjacent in $W$. Since every vertex of $W$ has degree at least two in $W$, it follows that $x, y$ are adjacent vertices of degree at least three in $G'$, contrary to \eqref{st:deg3}. This proves \eqref{st:Wisinduced}.
   \medskip
   
   By contracting some edges of $W$ if needed, we obtain an induced minor of $G'$ isomorphic to $W_{r\times r}$. Since $G'$ is an induced minor of $G$, we deduce that $G$ has an induced minor isomorphic to $W_{r\times r}$. This completes the proof of Theorem~\ref{thm:mainktt}.
\end{proof}

Now Theorem \ref{thm:main_ind_minor} follows from combining Theorem~\ref{thm:2linear}, Lemma~\ref{lem:getample2}, and Theorem~\ref{thm:mainktt}. 

\section{Acknowledgements}
 This work was partly done during the 2024 Barbados Graph Theory Workshop at the Bellairs Research Institute of McGill University, in Holetown, Barbados. We thank the organizers for inviting us and for creating a stimulating work environment. We also thank Bogdan Alecu who worked with us on the results in Section~\ref{sec:zig}.
 
\bibliographystyle{plain}
\bibliography{ref}

\end{document}